\documentclass[a4,12pt,reqno]{amsart}

\setlength{\columnseprule}{0.4pt}
\setlength{\topmargin}{.5cm}
\setlength{\oddsidemargin}{1.5cm}
\setlength{\evensidemargin}{1.5cm}
\setlength{\textheight}{22.0cm}
\setlength{\textwidth}{13.0cm}

\usepackage{amsfonts,amsmath,amsthm}
\usepackage{amssymb,epsfig}
\usepackage{fancybox}
\usepackage[centertags]{amsmath}
\usepackage{graphicx}
\usepackage{a4wide}

\usepackage{cases}
\usepackage[usenames]{color}
\usepackage{enumerate} 

\usepackage{color} 
\definecolor{vert}{rgb}{0,0.6,0}

 \theoremstyle{plain}
 \begingroup
 \newtheorem{thm}{Theorem}[section]
  
 \newtheorem{lem}[thm]{Lemma}
 \newtheorem{prop}[thm]{Proposition}
 \newtheorem{cor}[thm]{Corollary}
 \endgroup

 \theoremstyle{definition}
 \begingroup
 
 \newtheorem{example}[thm]{Example}
 \endgroup

 \theoremstyle{remark}
 \begingroup
 \newtheorem{rem}[thm]{Remark}
 \endgroup

 \numberwithin{equation}{section}



\newcommand{\E}{\mathbb{E}}

\newcommand{\N}{\mathbb{N}}
\newcommand{\bP}{\mathbb{P}}
\newcommand{\R}{\mathbb{R}}

\newcommand{\T}{\mathbb{T}}
\newcommand{\Z}{\mathbb{Z}}

\newcommand{\AC}{{\rm AC\,}}

\newcommand{\bO}{\partial\Omega}
\newcommand{\cO}{\overline\Omega}
\newcommand{\Q}{\mathbb{R}^{n}\times(0,T)}

\newcommand{\cQ}{\mathbb{R}^{n}\times[0,T]}

\newcommand{\al}{\alpha}
\newcommand{\gam}{\gamma}
\newcommand{\del}{\delta}
\newcommand{\ep}{\varepsilon}

\newcommand{\sig}{\sigma}
\newcommand{\om}{\omega}

\newcommand{\Del}{\Delta}

\newcommand{\Om}{\Omega}

\newcommand{\ol}{\overline}

\newcommand{\pl}{\partial}
\newcommand{\supp}{{\rm supp}\,}

\newcommand{\bone}{\mathbf{1}}

\newcommand{\EP}{{\rm (E}_P{\rm)}}
\newcommand{\EQ}{{\rm (E}_Q{\rm)}}
\newcommand{\D}{{\rm (D}_\varepsilon{\rm)}}

\def\limssup{\mathop{\rm limsup\!^*}}
\def\limiinf{\mathop{\rm liminf_*}}


\begin{document}
\title[Homogenization of weakly coupled systems 
of HJ equations]
{Homogenization of weakly coupled systems of
Hamilton--Jacobi equations with fast switching rates}

\author[H. MITAKE]
{Hiroyoshi MITAKE}
\author[H. V. TRAN]
{Hung V. Tran}

\dedicatory{Dedicated to Professor H. Ishii on the occasion of his 65th birthday}

\address[H. Mitake]
{
Department of Applied Mathematics, 
Faculty of Science, Fukuoka University, 
Fukuoka 814-0180, Japan}
\email{mitake@math.sci.fukuoka-u.ac.jp}

\address[H. V. Tran]
{Department of Mathematics, 
University of California, Berkeley, CA 94720, USA}
\email{hung@math.uchicago.edu}

\keywords{Homogenization; Hamilton--Jacobi Equations; Dirichlet problems, 
Weakly Coupled Systems; Effective Hamiltonians; Switching Cost Problems; Piecewise-deterministic Markov processes; Viscosity Solutions}
\subjclass[2010]{
{35B27}, 
35F55, 
49L25
}

\thanks{This work was partially done while the first author 
visited Mathematics Department, University of California, Berkeley. }

\date{\today}

\begin{abstract}
We consider homogenization for weakly coupled systems 
of Hamilton--Jacobi equations with fast switching rates. 
The fast switching rate terms force the solutions converge to 
the same limit, which is a solution of the effective equation.
We discover the appearance of the initial layers,
which appear naturally when we consider the systems with 
different initial data and analyze them rigorously.
In particular, we obtain {matched} 
asymptotic solutions of the systems and rate of convergence.
We also investigate properties of the effective Hamiltonian of
weakly coupled systems and 
show some examples which do not appear in the context of single equations. 
\end{abstract}

\maketitle

\tableofcontents


\section{Introduction}
In this paper 
we study the behavior,
as 
$\ep (>0)$ tends to $0$, of
the viscosity solutions 
$(u^\ep_1,u^\ep_2)$ of
the following weakly coupled 
systems of Hamilton--Jacobi equations 
\begin{numcases}
{(\textrm{C}_\ep) \hspace{1cm}}
(u^\ep_{1})_t + H_{1}(\dfrac{x}{\ep},Du^\ep_{1}) 
+\dfrac{c_1}{\ep}( u^\ep_{1}-u^\ep_{2}) = 0
& in $\Q$, \nonumber \\
(u^\ep_{2})_t + H_{2}(\dfrac{x}{\ep},Du^\ep_{2}) 
+\dfrac{c_2}{\ep}(u^\ep_{2}-u^\ep_{1}) = 0
& in $\Q$, \nonumber \\
u^\ep_{i}(x,0)=f_{i}(x)
& 
on $\R^{n}$ for $i=1,2$, 
\nonumber
\end{numcases}
where 
$T>0$, $c_1, c_2$ are given positive constants and 
the Hamiltonians $H_{i}(\xi,p):\R^n \times \R^n\to\R$ are 
given continuous functions for $i=1,2$, 
which are assumed throughout the paper to satisfy the following. 
\begin{itemize}
\item[{\rm(A1)}]
The functions $H_{i}$ are  uniformly coercive in the $\xi$-variable, 
i.e., 
$$
\lim_{r\to\infty}\inf\{H_{i}(\xi,p)\mid \xi\in\R^n, 
|p|\ge r \}=\infty.
$$
\item[{\rm(A2)}]
The functions $\xi\mapsto H_{i}(\xi,p)$ are $\T^n$-periodic, 
i.e., 
$H_{i}(\xi+z,p)= H_{i}(\xi,p)$ 
for any $\xi,p\in\R^n$, $z\in\Z^n$ and $i=1,2$. 

\end{itemize}
The functions $f_{i}$ 
are given continuously differentiable functions 
on  $\R^{n}$ 
with $\| Df_i\|_{L^\infty(\R^n)}$ are bounded 
for $i=1,2$, respectively. 
Here $u^\ep_{i}$ are the real-valued unknown functions on $\cQ$ and 
$(u^\ep_{i})_t:=\partial u^\ep_{i}/\partial t, 
Du^\ep_{i}:=(\partial u^\ep_{i}/\partial x_1,\ldots,\partial u^\ep_{i}/\partial x_n)$ 
for $i=1,2$, respectively. 
We are dealing only with viscosity solutions of Hamilton--Jacobi equations 
in this paper and thus the term ``viscosity" may be omitted henceforth.

\subsection{Background: Randomly Switching Cost Problems}

System $(\textrm{C}_\ep)$ arises  as the dynamic programming
for the optimal control of the system
whose states are governed by certain ODEs, subject to random changes in
the dynamics: the system randomly switches at a fast rate $1/\ep$
among the two states. 
{
See \cite{D, EF, FS, BS, E} for instance.
}
Also see \cite{LY, CLLN} for another switching cost problems. 
In order to explain the background more precisely, 
we assume in addition that the  Hamiltonians $H_i$ are convex in $p$ here.
We define the functions $u_i^{\ep}:\cQ\to\R$ by 
\begin{equation}\label{def:value}
u_{i}^{\ep}(x,t):=
\inf\Big\{\E_{i}\Big(\int_{0}^{t}L_{\nu^{\ep}(s)}(\frac{\eta(s)}{\ep},-\dot{\eta}(s))\,ds
+f_{\nu^{\ep}(t)}(\eta(t))\Big)\Big\}, 
\end{equation}
where $L_i:\R^{2n}\to\R\cup\{+\infty\}$ are 
{the Fenchel-Legendre transform} of $H_i$, i.e., 
$L_i(\xi, q):=\sup_{p\in\R^n}(p\cdot q-H_i(\xi, p))$ 
for all $(\xi,q)\in\R^{2n}$ and 
the infimum is taken over $\eta\in\AC([0,t],\R^n)$ such that 
$\eta(0)=x$. 
Here 
$\AC([0,t],\R^n)$ denotes the set of absolutely continuous functions with 
value in $\R^n$ and 
$\E_{i}$ denotes the expectation of 
a process with $\nu^{\ep}(0)=i$ where 
$\nu^\ep$  is a $\{1,2\}$-valued process, which 
is a continuous-time Markov chain, such that 
\begin{equation}\label{markov}
\bP\big(\nu^{\ep}(s+\Del s)=j\mid 
\nu^{\ep}(s)=i\big)=\frac{c_i}{\ep}\Del s+o(\Del s) \ 
\textrm{as} \ \Del s\to0 \ \textrm{for} \ i\not=j, 
\end{equation}
where $o:[0,\infty)\to [0,\infty)$ is a function satisfying $o(r)/r\to0$
as $r\to0$. 
Formula \eqref{def:value} is basically the optimal control formula for the solution
of (C$_\ep$), where the random switchings among the two states are governed by
\eqref{markov}.

We first give a formal proof that $(u_1^\ep,u_2^\ep)$ given by 
\eqref{def:value} is a solution of (C$_{\ep}$).  
The rigorous derivation will be proved in Appendix by using the 
dynamic programming principle.
We suppose that $u_i^\ep\in C^1(\cQ)$ here. 
Set $u^\ep(x,i,t):=u_{i}^\ep(x,t)$ and 
$Y(s):=(\eta(s),\nu^{\ep}(s))$ for $\eta\in\AC(\R^n)$ with 
$\eta(0)=x$ and 
let $\nu^{\ep}$ be a Markov chain given by 
\eqref{markov} with $\nu^{\ep}(0)=i$. 
By Ito's formula for a jump process we have 
\begin{align*}
&
\E_{i}\Big(u^{\ep}(Y(t),0)-u^{\ep}(Y(0),t)\Big)\\
=&\, 
\E_{i}\Big(
\int_{0}^{t}-u_t^{\ep}(Y(s),t-s)+Du^{\ep}(Y(s),t-s)\cdot\dot{\eta}(s)\,ds\\
&\, 
+\int_{0}^{t}\sum_{j=1}^{2}
\big(u^{\ep}(\eta(s),j,t-s)-u^{\ep}(\eta(s),\nu^\ep(s),t-s)\big)\cdot\frac{c_{\nu^\ep(s)}}{\ep}\,ds
\Big)\\
\ge&\,  
\E_{i}\Big(
\int_{0}^{t}
-u^{\ep}_t(Y(s),t-s)
-H_{\nu^{\ep}(s)}(\frac{\eta}{\ep},Du^{\ep})
-L_{\nu^{\ep}(s)}(\frac{\eta}{\ep},-\dot{\eta})\,ds\\
&\, 
+\int_{0}^{t}\sum_{j=1}^{2}
\big(u^{\ep}(\eta(s),j,t-s)-u^{\ep}(\eta(s),\nu^\ep(s),t-s)\big)
\cdot \frac{c_{\nu^{\ep}(s)}}{\ep}\,ds
\Big)\\
=&\,  
-\E_{i}\Big(
\int_{0}^{t}L_{\nu^{\ep}(s)}(\frac{\eta}{\ep},-\dot{\eta})\,ds
\Big). 
\end{align*}
Thus, 
\[
u^{\ep}(x,i,t)\le
\E_{i}\Big(
\int_{0}^{t}L_{\nu^{\ep}(s)}(\frac{\eta}{\ep},-\dot{\eta})\,ds
+u^{\ep}(Y(t),0)
\Big). 
\]

In the above inequality, the equality holds if 
$-\dot{\eta}(s)\in D_{p}^{-}H_{\nu^\ep(s)}(\eta(s)/\ep, Du^\ep(Y(s),t-s))$, 
where $D_{p}^{-}H_i$ denotes the subdifferential of $H_i$ with respect to 
the $p$-variable.

\subsection{Main Results}
There have been extensively many important results on the study of homogenization
of Hamilton--Jacobi equations.
The first general result is due to Lions, Papanicolaou, and Varadhan \cite{LPV}
who studied the cell problems together with 
the effective Hamiltonian and established homogenization results under
quite general assumptions on Hamiltonians in the periodic setting.
The next major contributions to the subject are due to Evans
\cite{E,E2} who 
introduced the perturbed test function method in the framework of 
viscosity solutions. 
The method then has been adapted to study so many different homogenization problems that we cannot provide a complete list of references. 
We here only refer to the papers related to our work. 
Concordel \cite{C1,C2} 
achieved some first general results on the
properties of the effective Hamiltonian concerning flat parts
and non-flat parts.
Afterwards Capuzzo-Dolceta and Ishii \cite{CDI} combined the 
perturbed test functions with doubling variables methods to
obtain the first results on the rate of convergence of $u^\ep$ to $u$.
We refer to \cite{CCM, Tr1} for some recent progress.

There have been some interesting results \cite{S,CLL, CM}
on the study of homogenization for weakly coupled systems
of Hamilton--Jacobi equations in the periodic settings or 
in the almost periodic settings. 
We also refer to \cite{AS2} 
for a related result in the random setting. 
We refer the readers to \cite{EL, IK} for the complete theory of viscosity solutions
for weakly coupled systems of Hamilton--Jacobi and Hamilton--Jacobi--Bellman equations.
 Since 
the maximum principle and comparison principle still hold, 
homogenization results can be obtained by using the perturbed
test function method quite straightforwardly with some modifications.
Let us call attention also to the new interesting direction on
the large time behavior of weakly coupled systems of Hamilton--Jacobi equations,
which is related to homogenization through the cell problems.
The authors \cite{MT1}, and Camilli, Ley, Loreti and Nguyen \cite{CLLN} obtained
large time behavior results for some special cases 
but general cases still remain open.

Let us also refer to one of the main research directions in the study of homogenization,
stochastic homogenization of Hamilton--Jacobi equations,
which were first obtained
by Souganidis \cite{Sou1}, and Rezakhanlou and Tarver \cite{RT1}
independently.
See \cite{LS1, KRV, Sch1, LS3, AS1} for more
recent progress on the subject.

First we heuristically derive  
the behavior of solutions of (C$_{\ep}$) as $\ep$ tends to $0$. 
For simplicity, from now on, we \textit{always} assume that 
$c_1=c_2=1$. 
We consider the formal asymptotic expansions of solutions
$(u_1^{\ep}, u_2^{\ep})$ of (C$_{\ep}$) of the form 
\[
u_{i}^{\ep}(x,t):=
u_{i}(x,t)+\ep v_{i}(\frac{x}{\ep})+O(\ep^2). 
\]
Set $\xi:=x/\ep$. 
Plugging this into (C$_{\ep}$) and performing formal 
calculations, we achieve
\[
(u_{i})_{t}+\ldots 
+H_{i}(\xi, D_x u_i+D_\xi v_i+\cdots)
+\frac{1}{\ep}(u_{i}-u_{j})
+(v_{i}-v_{j})+\cdots
=0, 
\]
where we take $i, j\in\{1,2\}$ such that 
$\{i,j\}=\{1,2\}$. 
The above expansion implies that
$u_{1}=u_{2}=:u$. 
Furthermore, if we let $P=Du(x,t)$ then
$(v_1,v_2)$ is a $\T^n$-periodic solution of the following cell problem
\begin{numcases}
{({\rm E}_P) \hspace{1cm}}
H_{1}(\xi,P+Dv_{1}(\xi,P))+v_{1}(\xi,P)-v_{2}(\xi,P)= \ol{H}(P) 
& in  $\R^{n}$,\ \nonumber\\
H_{2}(\xi,P+Dv_{2}(\xi,P))+v_{2}(\xi,P)-v_{1}(\xi,P)= \ol{H}(P)
& in  $\R^{n}$,  \nonumber
\end{numcases}
where $\ol{H}(P)$ is a unknown constant. 
Because of the $\T^n$-periodicity of the Hamiltonians $H_i$,
we can also
consider the above cell problem on the torus $\T^n$,
which is equivalent to consider it on $\R^n$
with $\T^n$-periodic solutions.
By an argument similar to the classical one in \cite{LPV}, we have 
\begin{prop}[Cell Problems]\label{prop:cell}
For any $P\in\R^n$, there exists a unique constant $\ol{H}(P)$ 
such that
$({\rm E}_P)$ 
admits a $\T^n$-periodic solution
$(v_{1}(\cdot,P),v_{2}(\cdot,P))\in C(\R^n)^{2}$.  
We call $\ol{H}$ the effective Hamiltonian associated with $(H_1,H_2)$.
\end{prop}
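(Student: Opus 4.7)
The plan is to adapt the classical discount approximation of \cite{LPV} to the weakly coupled setting. For each $\delta>0$, I would introduce the discounted cell system on $\T^n$,
\begin{align*}
\delta w^\delta_1 + H_1(\xi,P+Dw^\delta_1) + w^\delta_1 - w^\delta_2 &= 0, \\
\delta w^\delta_2 + H_2(\xi,P+Dw^\delta_2) + w^\delta_2 - w^\delta_1 &= 0,
\end{align*}
which, by Perron's method combined with the comparison principle for weakly coupled systems from \cite{EL,IK}, admits a unique continuous $\T^n$-periodic viscosity solution $(w^\delta_1,w^\delta_2)$.

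The next task is to derive uniform-in-$\delta$ bounds. Constant pairs $(\pm M/\delta,\pm M/\delta)$ with $M$ large enough (depending only on $\|H_i(\cdot,P)\|_\infty$) are super/subsolutions, yielding $\|\delta w^\delta_i\|_\infty\le C$. The delicate step, which has no analogue in the scalar LPV argument, is the uniform oscillation bound $\|w^\delta_1-w^\delta_2\|_\infty\le C$. I would obtain it by doubling of variables on $w^\delta_1(\xi)-w^\delta_2(\eta)-\al^{-1}|\xi-\eta|^2$: subtracting the subsolution inequality for $w^\delta_1$ at $\xi_\al$ from the supersolution inequality for $w^\delta_2$ at $\eta_\al$ and sending $\al\to 0$ yields
\[
(2+\delta)\sup(w^\delta_1-w^\delta_2) \le -\bigl[H_1-H_2\bigr](\xi^*,P+p^*),
\]
where (A1) combined with the bound on $\delta w^\delta_i$ forces the penalty gradients $p_\al$ to stay bounded (otherwise the subsolution inequality would be violated). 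Swapping the indices gives the analogous lower bound. Once $\|w^\delta_1-w^\delta_2\|_\infty$ is controlled, the bound $|H_i(\xi,P+Dw^\delta_i)|\le C$ together with (A1) yields a uniform Lipschitz estimate on $w^\delta_i$.

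With these estimates in hand, set $v^\delta_i := w^\delta_i - w^\delta_1(\xi_0)$ for a fixed $\xi_0\in\T^n$. Arzel\`a--Ascoli then extracts a subsequence along which $v^\delta_i\to v_i$ uniformly on $\T^n$ and $-\delta w^\delta_1(\xi_0)\to \ol{H}(P)$, and the standard stability of viscosity solutions identifies $(v_1,v_2)$ as a $\T^n$-periodic solution of $\EP$ with constant $\ol{H}(P)$. For uniqueness, observe that if $(v_1,v_2,\ol{H})$ is any such triple, then $v_i-\ol{H}/\delta$ solves the discounted system with source $\delta v_i = O(\delta)$; comparison for the discounted system gives $\|w^\delta_i - (v_i-\ol{H}/\delta)\|_\infty \le \|v_i\|_\infty$, whence $-\delta w^\delta_i\to\ol{H}$ uniformly. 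Since the left-hand side is intrinsic to the discounted problem and does not depend on the particular solution, $\ol{H}$ is uniquely determined and the full family $-\delta w^\delta_i$ converges.

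The main obstacle I anticipate is the uniform oscillation bound on $w^\delta_1-w^\delta_2$ in the second paragraph: in the scalar setting of \cite{LPV}, the analogous bound is automatic once $\|\delta w^\delta\|_\infty$ is controlled, but here the coupling terms $w^\delta_i-w^\delta_j$ are not \emph{a priori} controlled by the discount scaling, so the argument genuinely relies on combining viscosity doubling of variables with the coercivity assumption (A1) to pin down the oscillation and the penalty gradients simultaneously.
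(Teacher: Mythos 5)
Your overall strategy --- the discount approximation $({\rm E}^\del_P)$, the uniform bound $\|\del w^\del_i\|_{L^\infty}\le C$ from constant barriers, passage to the limit by Arzel\`a--Ascoli, and uniqueness by comparison against the discounted system --- is the classical LPV scheme and essentially the route the paper takes; the discounted system and the required uniform estimates appear explicitly in Lemma~\ref{app:lem1}. The place you diverge is in controlling the coupling term. You propose to bound $\|w^\del_1-w^\del_2\|_{L^\infty}$ \emph{first}, by doubling, but the justification that ``(A1) combined with the bound on $\del w^\del_i$ forces the penalty gradients $p_\al$ to stay bounded'' is circular as stated: the subsolution inequality $\del w^\del_1 + H_1 + (w^\del_1 - w^\del_2) \le 0$ that you use to invoke coercivity contains $(w^\del_1-w^\del_2)(\xi_\al)$, which is precisely the quantity being controlled. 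To close the argument you must observe that at the doubled maximizer $(w^\del_1-w^\del_2)(\xi_\al)\to\sup(w^\del_1-w^\del_2)\ge 0$ (WLOG), so this term has the favorable sign and coercivity then bounds $p_\al$ uniformly in $\del$. The paper's Lemma~\ref{app:lem1} sidesteps this subtlety entirely by \emph{summing} the two discounted equations: the coupling cancels, yielding
\[
H_1(\xi,P+Dv^\del_1)+H_2(\xi,P+Dv^\del_2)=-\del(v^\del_1+v^\del_2)=O(1),
\]
so the uniform Lipschitz estimate follows directly from coercivity and the lower boundedness of each $H_i$, and $\|v^\del_1-v^\del_2\|_{L^\infty}\le C$ is then read off from either equation. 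Thus the oscillation bound you flag as having ``no analogue in the scalar LPV argument'' is in fact a free consequence of cancellation under summation, obtained \emph{after} the Lipschitz bound rather than before, and the doubling is unnecessary; but if you do take the doubling route, the sign observation above is the missing ingredient.
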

See also \cite{CGT2, MT1, CLLN} for more details about the cell problems for weakly coupled systems.

Our main goal in this paper is threefold.
First of all, we want to demonstrate that $u_i^\ep$ converge locally uniformly to the same limit $u$
in $\R^n \times (0,T)$ for $i=1,2$ and $u$ solves
$$
u_t+\ol{H}(Du)=0
\quad \mbox{in } \Q.
$$
This part is a rather standard part in the study of homogenization of Hamilton--Jacobi equations
by using the perturbed test function method introduced by Evans \cite{E}
with some modifications. The only hard part comes from the fact that 
we do not have uniform bounds on the gradients of $u_i^\ep$ here because of
the fast switching terms. We overcome this difficulty by introducing the barrier functions 
(see Lemma \ref{lem:barrier}) and using the half-relaxed limits (see the proof of 
Theorem \ref{thm:main1}). The barrier functions furthermore give us 
the correct initial data for the limit $u$. 
Let $(u_{1}^{\ep}, u_{2}^{\ep})$ be the solution of (C$_{\ep}$) henceforth.

\begin{thm}[Homogenization Result] \label{thm:main1}
The functions $u^\ep_i$ converge locally uniformly in $\Q$ to the same limit 
$u\in C^{0,1}(\cQ)$ as $\ep \to 0$ for $i=1,2$ and $u$ solves
\begin{equation}\label{HJ.limit}
\begin{cases}
u_t+\ol{H}(Du)=0 \qquad \qquad\qquad &\textrm{in } \Q \\
u(x,0) = \ol{f}(x):=\dfrac{f_{1}(x)+f_{2}(x)}{2} &\textrm{on } \R^n.
\end{cases}
\end{equation}
\end{thm}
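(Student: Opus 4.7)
The plan is to combine the perturbed test function method of Evans with the Barles--Perthame half-relaxed limits technique, using the barrier functions from Lemma~\ref{lem:barrier} to supply both uniform $L^\infty$ bounds and the correct initial trace.

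By Lemma~\ref{lem:barrier} the families $\{u_i^\ep\}_{\ep>0}$ are uniformly bounded on $\cQ$, so the upper and lower half-relaxed limits
\[
\bar{u}_i(x,t) := \limssup_{\ep\to 0,\ (y,s)\to(x,t)} u_i^\ep(y,s), \qquad \underline{u}_i(x,t) := \limiinf_{\ep\to 0,\ (y,s)\to(x,t)} u_i^\ep(y,s),
\]
are bounded USC (resp. LSC) functions on $\cQ$ for $i=1,2$. I first show that $\bar{u}_1 = \bar{u}_2 =: \bar{u}$ and $\underline{u}_1 = \underline{u}_2 =: \underline{u}$ on $\Q$. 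Subtracting the two equations of $(\textrm{C}_\ep)$ formally yields a factor $2/\ep$ multiplying $u_1^\ep - u_2^\ep$, forcing exponential decay of the difference on the time scale $\ep$; this can be made rigorous either by a doubling-of-variables argument on the viscosity inequalities, or directly from the explicit upper/lower barriers in Lemma~\ref{lem:barrier}.

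Next I show that $\bar{u}$ is a subsolution and $\underline{u}$ a supersolution of $u_t + \ol{H}(Du)=0$ by the perturbed test function method. Given $\phi\in C^1$ touching $\bar{u}$ from above at $(x_0,t_0)\in \Q$, set $P := D\phi(x_0,t_0)$, take $(v_1(\cdot,P), v_2(\cdot,P))$ the $\T^n$-periodic solution of $({\rm E}_P)$ from Proposition~\ref{prop:cell}, and define
\[
\phi_i^\ep(x,t) := \phi(x,t) + \ep\, v_i(x/\ep,P), \qquad i=1,2.
\]
Standard arguments produce, for each $i$, a local maximum point $(x_i^\ep, t_i^\ep)\to(x_0,t_0)$ of $u_i^\ep - \phi_i^\ep$; the subsolution inequalities for $u_i^\ep$ at these points, combined with the cell-problem identities and the fact that $u_1^\ep - u_2^\ep \to 0$ from the previous step (which makes the fast coupling term $\ep^{-1}(u_1^\ep - u_2^\ep)$ absorb harmlessly into $v_1-v_2$), yield after passage to the limit the desired inequality $\phi_t(x_0,t_0) + \ol{H}(P) \leq 0$. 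The supersolution property of $\underline{u}$ is symmetric.

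Finally, the initial trace $\bar{u}(x,0) = \underline{u}(x,0) = \ol{f}(x)$ is read off by sandwiching $u_i^\ep$ between explicit sub/super barriers from Lemma~\ref{lem:barrier} that encode the initial layer and collapse to $\ol{f}$ as $t \to 0^+$. The comparison principle for \eqref{HJ.limit} then forces $\bar{u} \leq u \leq \underline{u}$, where $u\in C^{0,1}(\cQ)$ is the unique viscosity solution of \eqref{HJ.limit}; since trivially $\underline{u} \leq \bar{u}$, equality holds throughout and both $u_i^\ep$ converge locally uniformly to $u$. The main obstacle is this last step: because $f_1 \neq f_2$ in general, both components undergo a rapid initial equilibration to the average $\ol{f}$ on a time scale of order $\ep$, and constructing barriers that capture this initial layer while being compatible with the PDEs is the delicate input that Lemma~\ref{lem:barrier} provides.
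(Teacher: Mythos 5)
Your overall scaffolding (barriers for $L^\infty$ bounds and the initial trace, half-relaxed limits, perturbed test functions, comparison principle) matches the paper, but there is a genuine gap in the way you handle the singular coupling term, and it is precisely the step the paper identifies as the "only hard part."

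You propose to first show $\bar{u}_1=\bar{u}_2$ and $\underline{u}_1=\underline{u}_2$ by establishing $u_1^\ep-u_2^\ep\to 0$ in $\Q$, claiming this follows "directly from the explicit barriers" or from a doubling argument. Neither works as stated. The barriers of Lemma~\ref{lem:barrier} only give $|u_1^\ep-u_2^\ep - (f_1-f_2)e^{-2t/\ep}|\le 2Ct$, so for fixed $t>0$ they yield $|u_1^\ep-u_2^\ep|\le 2Ct+o(1)$, which does not vanish. A doubling-of-variables estimate on $u_1^\ep-u_2^\ep$ would require controlling $H_1(\hat x/\ep,p)-H_2(\hat y/\ep,p)$ at the doubling point, and that needs uniform gradient bounds on $u_i^\ep$ --- but the paper explicitly notes these are unavailable because of the fast switching. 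More seriously, even if you had $u_1^\ep-u_2^\ep\to 0$ (without a rate), that does \emph{not} make $\ep^{-1}(u_1^\ep-u_2^\ep)$ "absorb harmlessly into $v_1-v_2$"; you would need $u_1^\ep-u_2^\ep=O(\ep)$, which is essentially the content of the much later Theorem~\ref{thm:main2}.

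The paper sidesteps all of this with one structural choice that is absent from your write-up: it defines a single upper relaxed limit $W=\limssup_{\ep\to 0}\sup_{i=1,2}u_i^\ep$ and a single lower one $w=\limiinf_{\ep\to 0}\inf_{i=1,2}u_i^\ep$, rather than component-wise limits. In the perturbed-test-function step one then takes the maximum in~\eqref{per.max} over the index $i$ as well. At that maximum the combination
\[
\frac{1}{\ep_m}\Bigl[\bigl(u^{\ep_m}_{j_m}-\ep_m v_{j_m}(x_m/\ep_m)\bigr)-\bigl(u^{\ep_m}_{i_m}-\ep_m v_{i_m}(x_m/\ep_m)\bigr)\Bigr]
\]
is automatically $\le 0$ because $i_m$ realizes the max over $i$, and after adding the cell-problem inequality~\eqref{test4} the $\ep^{-1}$-coupling term drops out with the correct sign, with no need for any a priori information on $u_1^\ep-u_2^\ep$. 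That the two components share the same limit then comes for free once $W\le w$ is obtained from the comparison principle for~\eqref{HJ.limit}, since $w\le W$ trivially. Without this max-over-index device, the coupling term has no sign and your passage to the limit does not close.
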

Formula \eqref{def:value} of solutions of (C$_{\ep}$) actually gives 
us an intuitive explanation about the effective initial datum $\ol{f}$.
As we send $\ep$ to $0$,
the switching rate becomes very fast and 
processes have to jump randomly {very quickly} 
between the two states with equal probability as given by \eqref{markov}.  
(Note that we assume $c_1=c_2=1$ now.) 
Therefore, it is relatively clear that $\ol{f}$ is the average of the 
given initial data $f_i$ for $i=1,2$. 
In general $\ol{f}$ depends on $c_1$ and $c_2$.

The second main part of this paper is the study of the initial layers appearing naturally
in the problem as the initial data of $u_i^\ep$ and $u$ are different in general.
We first study the initial layers in a heuristic mode by finding
inner and outer solutions, and using the matching asymptotic expansion method
to identify matched solutions (see Section 3.1).
We then combine the techniques of the matching asymptotic expansion method and
of Capuzzo-Dolceta and Ishii \cite{CDI} to obtain rigorously the rate of convergence 
result. 
\begin{thm}[{Rate of Convergence to Matched Solutions}]\label{thm:main2}
For each $T>0$, there exists $C:=C(T)>0$ such that
\[
\|u^\ep_{i} - m_{i}^{\ep}\|_{L^\infty(\R^n \times [0,T])}
 \le C \ep^{1/3} \ 
 \text{for} \ i=1,2, 
\]
where $u$ is the solution of \eqref{HJ.limit} and 
\begin{equation}\label{matched-func}
m_{i}^{\ep}(x,t):=u(x,t)+\dfrac{(f_{i}-f_{j})(x)}{2}e^{-\frac{2t}{\ep}}
\end{equation}
with $j\in\{1,2\}$ such that $\{i,j\}=\{1,2\}$. 
\end{thm}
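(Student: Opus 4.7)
The plan is to combine the perturbed test function method of Evans with the doubling of variables technique of Capuzzo-Dolcetta and Ishii \cite{CDI}, suitably adapted to the weakly coupled structure and to the initial-layer term appearing in the matched ansatz $m_i^\ep$. The starting observation is that $m_i^\ep$ is an \emph{approximate} solution of $(\mathrm{C}_\ep)$: a direct differentiation (with $\{i,j\}=\{1,2\}$) gives
\[
(m_i^\ep)_t + \tfrac{1}{\ep}(m_i^\ep - m_j^\ep) = u_t,
\]
because the singular contribution $-\tfrac{1}{\ep}(f_i - f_j) e^{-2t/\ep}$ in $(m_i^\ep)_t$ is exactly cancelled by the coupling term. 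Consequently the residual of $m_i^\ep$ in $(\mathrm{C}_\ep)$ reduces to $H_i\bigl(x/\ep,\, Du + \tfrac12 D(f_i-f_j) e^{-2t/\ep}\bigr) - \ol H(Du)$, and the natural way to close this remaining gap is to add the cell-problem corrector $\ep v_i(x/\ep, P)$ from Proposition~\ref{prop:cell}, with $P$ frozen close to $Du$.

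To make this rigorous I would introduce, for each $i \in \{1,2\}$ and with $j$ the other index,
\[
\Phi_i(x,t,y,s) := u_i^\ep(x,t) - m_i^\ep(y,s) - \ep v_i\bigl(\tfrac{x}{\ep}, P\bigr) - \tfrac{|x-y|^2 + (t-s)^2}{\eta} - \beta(|x|^2+|y|^2),
\]
set $M^\ep := \max_{i \in \{1,2\}} \sup \Phi_i$, and realise the maximum at some $(i_0, \hat x, \hat t, \hat y, \hat s)$ with the momentum frozen at $P = (\hat x - \hat y)/\eta$ (either by a fixed-point argument or by treating $P$ as a separate penalised variable). For interior points I would apply the viscosity subsolution inequality for $u_{i_0}^\ep$ at $(\hat x, \hat t)$ and the supersolution inequality for $u$ at $(\hat y, \hat s)$, the latter being legitimate because $m_i^\ep - u$ is an explicit smooth function. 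Using $\EP$ with this $P$ to replace $H_{i_0}(\hat x/\ep, P + Dv_{i_0}(\hat x/\ep,P))$ by $\ol H(P) + v_{j_0} - v_{i_0}$, and invoking the symmetrisation $\Phi_{j_0}(\hat x, \hat t, \hat y, \hat s) \le M^\ep = \Phi_{i_0}(\hat x, \hat t, \hat y, \hat s)$ to deduce
\[
u_{i_0}^\ep(\hat x,\hat t) - u_{j_0}^\ep(\hat x,\hat t) \ge (f_{i_0}-f_{j_0})(\hat y)\,e^{-2\hat s/\ep} + \ep\,(v_{i_0}-v_{j_0})\bigl(\tfrac{\hat x}{\ep}, P\bigr),
\]
I would check that the singular $\tfrac{1}{\ep}(u_{i_0}^\ep - u_{j_0}^\ep)$ contribution in the subsolution inequality exactly cancels the singular $\tfrac{1}{\ep}(f_{i_0}-f_{j_0})(\hat y) e^{-2\hat s/\ep}$ produced by the supersolution inequality on the $m_i^\ep$ side. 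What remains after subtraction is an inequality of the form $\ol H(P) - \ol H(P + O(e^{-2\hat s/\ep}) + O(\beta)) \le O(\beta)$, controlled by the modulus of continuity of $\ol H$.

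Boundary cases $\hat t=0$ or $\hat s=0$ are handled by the matching $m_i^\ep(\cdot,0) = f_i = u_i^\ep(\cdot,0)$, which makes $\Phi_i \le \beta(|x|^2+|y|^2) + O(\ep)$ there. Collecting all error terms, $M^\ep$ is bounded by $C(\eta + \ep/\eta + \ep)$ plus $\beta$-terms that vanish in the limit after the standard localisation via the Lipschitz bounds from Lemma~\ref{lem:barrier}; a three-term balance with $\eta \sim \ep^{2/3}$ then yields $M^\ep \le C \ep^{1/3}$. Setting $x=y$, $t=s$ in $\Phi_i$ gives the one-sided bound $u_i^\ep - m_i^\ep \le C\ep^{1/3}$; the reverse inequality follows symmetrically by replacing $\Phi_i$ with $m_i^\ep(x,t) - u_i^\ep(y,s) - \text{correction}$ and exchanging sub- and supersolution inequalities. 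The main obstacle will be the careful bookkeeping of (i) the frozen-momentum error, which requires quantifying the $P$-Lipschitz regularity of the correctors $v_i(\cdot, P)$, (ii) the initial-layer gradient $\tfrac12 D(f_{i_0}-f_{j_0}) e^{-2\hat s/\ep}$, which is $O(1)$ for $\hat s \lesssim \ep$ and can only be absorbed through the modulus of continuity of $\ol H$, and (iii) the precise symmetrisation between indices $i=1,2$ that produces the exact cancellation of the singular $1/\ep$ coupling against the initial-layer term.
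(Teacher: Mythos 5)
Your key algebraic observation is correct and genuinely illuminating: since $m_i^\ep - m_j^\ep = (f_i-f_j)e^{-2t/\ep}$, the singular $\tfrac{1}{\ep}$ factor in $(m_i^\ep)_t$ is exactly cancelled by the coupling, so $(m_i^\ep)_t + \tfrac1\ep(m_i^\ep - m_j^\ep) = u_t$, and the entire approximation error of $m_i^\ep$ in $(\mathrm{C}_\ep)$ sits in $H_i(x/\ep,\, Du + \tfrac12 D(f_i-f_j)e^{-2t/\ep}) - \ol H(Du)$. Your overall plan (double variables, add a frozen-$P$ corrector, compare sub/supersolution inequalities, symmetrise over the index) is also the right family of ideas. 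However, there are two genuine gaps.

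The main gap is in item (ii) of your own list of obstacles. You propose to absorb the layer-gradient term $\tfrac12 D(f_{i_0}-f_{j_0})\,e^{-2\hat s/\ep}$ "through the modulus of continuity of $\ol H$." This cannot close: for $\hat s \lesssim \ep$ that perturbation of the momentum is $O(1)$, and a modulus of continuity applied to an $O(1)$ increment returns $O(1)$, not $O(\ep^{1/3})$. Your treatment of "boundary cases" only covers the strict boundary $\hat s = 0$; it does not help in the open regime $0 < \hat s \lesssim \ep$, which is precisely where the layer lives. The paper resolves this by not facing the layer in the doubling argument at all: it splits $[0,T]$ at $t_\ep := \ep|\log\ep|$. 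On $[0,t_\ep]$ the barrier functions of Lemma \ref{lem:barrier} together with the Lipschitz bound $|u(x,t)-\ol f(x)| \le C_u t$ give directly $|u_i^\ep - m_i^\ep| \le C\ep|\log\ep|$ (Proposition \ref{prop:layer}), while on $[t_\ep, T]$ one has $e^{-2t/\ep} \le \ep^2$, so $m_i^\ep$ is $\ep^2$-close to $u$ and it suffices to prove the plain homogenization rate $|u_i^\ep - u| \le C\ep^{1/3}$ (Proposition \ref{prop:rate}). Your proposal lacks this time-splitting, and without it (or an equivalent device) the argument does not close.

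A secondary gap concerns the frozen-$P$ corrector. You correctly flag that you need $P$-Lipschitz control of $v_i(\cdot,P)$, but you offer no mechanism, and in fact solutions of $(\mathrm{E}_P)$ are only unique up to additive constants, so there is no canonical, Lipschitz-in-$P$ selection. The paper's remedy is Lemma \ref{app:lem1}: replace $(\mathrm{E}_P)$ by the discounted system $(\mathrm{E}_P^\delta)$, whose solution $(v_1^\delta,v_2^\delta)$ is unique, satisfies $\delta|v_i^\delta(\xi,P)-v_i^\delta(\xi,Q)|\le C|P-Q|$, and approximates $-\ol H(P)$ with error $O(\delta)$. Choosing $\delta = \ep^\theta$ and the doubling scale $\ep^\beta$ then gives the three error terms $K$, $\ep^\theta$, $\ep^{1-\theta-\beta}$, whose optimal balance $\theta = \beta = 1/3$ yields the rate $\ep^{1/3}$. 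Your two-term balance $\eta \sim \ep^{2/3}$ ignores the $\delta$-errors and would not by itself produce a controlled corrector. In short: the architecture is right, but you need both the time-splitting (using the barriers from Lemma \ref{lem:barrier}) and the discounted cell problem to make the argument rigorous.
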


Finally, we study various properties of the effective Hamiltonian $\ol{H}$.
It is always extremely hard to understand properties of the
effective Hamiltonians even for single equations.
Lions, Papanicolaou and Varadhan \cite{LPV} studied some preliminary
properties of the effective Hamiltonians and pointed out a $1$-dimensional example
that $\ol{H}$ can be computed explicitly.
After that, Concordel \cite{C1,C2} discovered some very interesting results related
flat parts and non-flat parts of $\ol{H}$ for more general cases. 
Evans and Gomes \cite{EG} found some further
properties on the strict convexity of $\ol{H}$ by using the weak KAM theory.

The properties of $\ol{H}$ for weakly coupled systems of Hamilton--Jacobi equations in this paper
are even more complicated. 
In case $H_1=H_2$, the effective Hamiltonian for the weakly
coupled systems and the single equations are obviously same. Therefore, we can view the cases of single
equations as special cases of the weakly coupled systems. However, in general, we cannot expect
the effective Hamiltonians for weakly coupled systems to have similar properties like single equations' cases.

The first few results on flat parts and non-flat parts of $\ol{H}$
 are {generalizations} to the ones discovered by Concordel \cite{C1, C2},
and are proved by using different techniques, 
namely the min-max formulas {which are derived in Section \ref{subsec:rep}} 
and the constructions of appropriate subsolutions.
On the other hand, we investigate other cases which show that 
the properties of the effective Hamiltonians for weakly coupled systems
are widely different from those of the effective Hamiltonians for single equations. 
Theorems \ref{thm:main6}, \ref{thm:main7}, \ref{thm:main7new} , \ref{thm:main8}, 
which are some of our main results, 
describe some rather new results which do not appear in the context 
of single Hamilton-Jacobi equations. 
Since the theorems are technical, we refer the readers 
to Section \ref{subsec:flat} for details. 
\smallskip

We are grateful to L. C. Evans for his suggestion which leads us to this project. 
We thank G. Barles,  D. Gomes, 
H. Ishii, T. Mikami, and F. Rezakhanlou for their fruitful discussions.  
We also thank S. Armstrong and P. E. Souganidis for letting us know about
the coming result on stochastic homogenization of 
weakly coupled systems of Hamilton--Jacobi equations of B. Fehrman \cite{Fe}.
Fehrman \cite{Fe} independently obtained interesting homogenization results 
of monotone systems of viscous Hamilton--Jacobi equations,
which are similar to ours,
with convex Hamiltonians in the stationary, ergodic setting
by using the ideas of Armstrong and Souganidis \cite{AS1, AS2}.
His work includes as well generalizations to other related systems.
\smallskip

The paper is organized as follows. 
In Section 2 we prove the homogenization result, Theorem \ref{thm:main1}. 
Section 3 devotes to the study of initial layers and rate of convergence.
We derive inner solutions, outer solutions, and matched asymptotic solutions in 
a heuristic mode and then prove Theorem \ref{thm:main2}.
The properties of the effective Hamiltonian are studied in Section 4.
We obtain its elementary properties in Section 4.1, the representation formulas
in Section 4.2, and flat parts, non-flat parts near the origin in Section 4.3. 
In Section 5 we prove generalization results for systems of $m$ equations for $m \ge 2$.
We then prove also the homogenization result for Dirichlet problems and describe
the differences of the effective data between Cauchy problems and Dirichlet problems 
in Section \ref{sec:dirichlet}.
Some lemmata concerning verifications of optimal control formulas for 
the Cauchy and Dirichlet problems are recorded in Appendix.

\medskip
\noindent
{\bf Notations.} 
{
For $k\in\N$ and $A\subset\R^n$, 
we denote by 
$C(A)$, $C^{0,1}(A)$ and $C^k(A)$ 
the space of real-valued 
continuous, 
Lipschitz continuous  
and $k$-th continuous differentiable functions  
on $A$, respectively. 
We denote $L^{\infty}(A)$ by the set of 
bounded measurable functions and 
$\|\cdot\|_{L^{\infty}(A)}$ denotes the superemum norm. 
Let $\T^n$ denote the $n$-dimensional torus 
 and  we identify $\T^n$ with $[0,1]^n$.
Define $\Pi:\R^n\to \T^n$ as the canonical projection.
By abuse of notations,
we denote
the periodic extensions of any set $B\subset \T^n$ and any function $f\in C(\T^n)$ 
to the whole space $\R^n$
by $B$, and $f$ themselves respectively.
For $a,b\in\R$, we write $a\land b=\min\{a,b\}$ and 
$a\vee b=\max\{a,b\}$.  
We call a function $m:[0,\infty)\to[0,\infty)$ a $modulus$ if it is continuous, 
nondecreasing on $[0,\infty)$ 
and $m(0)=0$. 
}


\section{Homogenization Results} \label{homo}

\begin{lem}[Barrier Functions]\label{lem:barrier}
We define the functions $\varphi_{i}^{\pm}:\cQ\to\R$ by 
\begin{equation}\label{func:barrier}
\begin{cases}
\varphi_1^{\pm}(x,t)=\dfrac{f_{1}(x)+f_{2}(x)}{2} 
+\dfrac{f_{1}(x)-f_{2}(x)}{2} e^{-\frac{2t}{\ep}}\pm Ct\\
\varphi_2^{\pm}(x,t)=\dfrac{f_{1}(x)+f_{2}(x)}{2} 
+\dfrac{f_{2}(x)-f_{1}(x)}{2} e^{-\frac{2t}{\ep}}\pm Ct. 
\end{cases}
\end{equation}
If we choose
$C\ge \max_{i=1,2}\max_{(\xi, p)\in \R^n \times B(0,r)} |H_{i}(\xi,p)|$, 
where $r=\|Df_1\|_{L^\infty(\R^n)}+\|Df_2\|_{L^\infty(\R^n)}$,
then $(\varphi_1^{-}, \varphi_2^{-})$ and $(\varphi_1^{+}, \varphi_2^{+})$ are, 
respectively, a subsolution and a supersolution of $({\rm C}_\ep)$,
and 
\[
(\varphi_1^{-}, \varphi_2^{-})(\cdot, 0)=(\varphi_1^{+}, \varphi_2^{+})(\cdot,0)=(f_{1},f_{2}) 
\ \text{on} \ \R^n.
\]
In particular, $\varphi_i^{-} \le u^\ep_i \le \varphi_i^{+}$ on $\cQ$ for $i=1,2$.
\end{lem}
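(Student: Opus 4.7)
The plan is to verify the sub- and supersolution inequalities by direct classical computation, since the data $f_i$ are $C^1$ and the factor $e^{-2t/\ep}$ is smooth, so each $\varphi_i^\pm$ is a classical $C^1$ function on $\cQ$. I would then match the initial data pointwise and invoke the comparison principle for weakly coupled Hamilton--Jacobi systems (as in \cite{EL, IK}) to conclude $\varphi_i^- \le u_i^\ep \le \varphi_i^+$. The initial condition is immediate: at $t=0$ the exponential factor equals $1$, so $\varphi_1^\pm(x,0) = \frac{f_1+f_2}{2} + \frac{f_1-f_2}{2} = f_1(x)$, and likewise $\varphi_2^\pm(x,0) = f_2(x)$.

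The whole argument hinges on one algebraic cancellation that motivates the ansatz: the $O(1/\ep)$ term in $(\varphi_1^\pm)_t$ is tuned to annihilate the $O(1/\ep)$ switching term. A direct computation gives
\[
(\varphi_1^\pm)_t = -\frac{f_1(x)-f_2(x)}{\ep}\,e^{-2t/\ep} \pm C,\qquad
\varphi_1^\pm - \varphi_2^\pm = (f_1(x)-f_2(x))\,e^{-2t/\ep},
\]
so that $(\varphi_1^\pm)_t + \frac{1}{\ep}(\varphi_1^\pm-\varphi_2^\pm) = \pm C$, and substituting into the first equation of $({\rm C}_\ep)$ collapses its left-hand side to $\pm C + H_1(x/\ep,\, D\varphi_1^\pm)$. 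This quantity is $\ge 0$ for the $+$ sign and $\le 0$ for the $-$ sign provided $|H_1(x/\ep,D\varphi_1^\pm)|\le C$. The second equation is handled identically by the $1\leftrightarrow 2$ symmetry built into the ansatz.

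It remains to obtain a uniform-in-$(x,t,\ep)$ bound on $|D\varphi_i^\pm|$ so that $C$ can be chosen independently of these variables. Writing
\[
D\varphi_1^\pm(x,t) = \frac{1+e^{-2t/\ep}}{2}\,Df_1(x) + \frac{1-e^{-2t/\ep}}{2}\,Df_2(x),
\]
we see that $D\varphi_1^\pm$ is a convex combination of $Df_1(x)$ and $Df_2(x)$ (the weights lie in $[0,1]$ and sum to $1$), hence $|D\varphi_1^\pm|\le \max(\|Df_1\|_{L^\infty(\R^n)},\|Df_2\|_{L^\infty(\R^n)})\le r$, and the same bound holds for $D\varphi_2^\pm$. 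By continuity of $H_i$ together with $\T^n$-periodicity in $\xi$, the quantity $\max_{i=1,2}\sup_{\xi\in\R^n,\,|p|\le r}|H_i(\xi,p)|$ is finite, so the constant $C$ prescribed in the statement is admissible and makes both the sub- and supersolution inequalities hold. No real obstacle appears here; the interest of the lemma lies in the precise form of the ansatz, designed so that the stiff switching terms cancel exactly, rather than in the verification itself.
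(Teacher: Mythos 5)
Your proof is correct and follows essentially the same approach as the paper: direct computation shows the $O(1/\ep)$ terms cancel exactly, leaving $\pm C + H_i(x/\ep, D\varphi_i^\pm)$, and the comparison principle from \cite{EL,IK} gives the ordering. Your additional observation that $D\varphi_i^\pm$ is a convex combination of $Df_1$ and $Df_2$ (so in fact $|D\varphi_i^\pm|\le \max(\|Df_1\|_\infty,\|Df_2\|_\infty)\le r$) is a nice explicit justification for the admissibility of $C$ that the paper leaves implicit.
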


\begin{proof}
We calculate that 
\begin{align*}
&
(\varphi_{1}^{-})_{t}
+H_{1}(\frac{x}{\ep}, D\varphi_{1}^{-})
+\frac{1}{\ep}(\varphi_{1}^{-}-\varphi_{2}^{-})\\
=&\, 
-\dfrac{f_{1}(x)-f_{2}(x)}{\ep} e^{-\frac{2t}{\ep}}-C
+H_{1}(\frac{x}{\ep}, D\varphi_{1}^{-})
+\dfrac{f_{1}(x)-f_{2}(x)}{\ep} e^{-\frac{2t}{\ep}}\\
=&\, 
-C
+H_{1}(\frac{x}{\ep}, D\varphi_{1}^{-}) \le 0
\end{align*}
for $C>0$ large enough as chosen above.
Similar calculations give us that 
$(\varphi_1^{-}, \varphi_2^{-})$ and $(\varphi_1^{+}, \varphi_2^{+})$ are, 
respectively, a subsolution and a supersolution of $(\textrm{C}_\ep)$. 
By the comparison principle for (C$_{\ep}$) (see \cite{EL,IK}) 
we get 
$\varphi_i^{-} \le u^\ep_i \le \varphi_i^{+}$ on $\cQ$ for $i=1,2$. 
\end{proof}

\begin{proof}[Proof of Theorem {\rm \ref{thm:main1}}]
By Lemma \ref{lem:barrier} we can take the following half-relaxed limits
$$
\begin{cases}
W(x,t):=\limssup_{\ep \to 0} \sup_{i=1,2}[u^\ep_i](x,t)\\
w(x,t):=\limiinf_{\ep \to 0} \inf_{i=1,2} [u^\ep_i] (x,t).
\end{cases}
$$
We now show that $W$ and $w$ are, respectively, 
a subsolution and a supersolution of \eqref{HJ.limit}
in $\Q$ by employing the perturbed test function method.
 
Since we can easily check $W(\cdot,0)=w(\cdot,0)=\ol{f}$ on $\R^n$ 
due to Lemma \ref{lem:barrier}, 
it is enough to prove that $W$ and $w$ are a subsolution and a supersolution, respectively, 
of the equation in \eqref{HJ.limit}. 
We only prove that $W$ is a subsolution since by symmetry we can prove that 
$w$ is a supersolution. 
We take a test function $\phi\in C^{1}(\Q)$ such that 
$W-\phi$ has a strict maximum at $(x_0,t_0) \in \Q$.
 Let $P:=D\phi(x_0,t_0)$. 
 Choose a sequence $\ep_m \to 0$ such that
 $$
 W(x_0,t_0)=\limssup_{m \to \infty}\max_{i=1,2} u^{\ep_m}_i(x_0,t_0).
 $$
We define the perturbed test functions $\psi^{\ep,\al}_i$ for $i=1,2$ 
and $\al>0$ by 
$$
 \psi^{\ep,\al}_i(x,y,t):=\phi(x,t)+\ep v_i(\dfrac{y}{\ep})+\dfrac{|x-y|^2}{2\al^2}, 
$$
where $(v_1,v_2)$ is a solution of $\EP$. 
By the usual argument in the theory of viscosity solutions,  
for every $m\in \N,\ \al>0$, there exist $i_{m,\al} \in \{1,2\}$ 
 and $(x_{m,\al},y_{m,\al},t_{m,\al}) \in \R^n \times \Q$
 such that 
 \begin{equation}\label{per.max}
 \max_{i=1,2} \max_{\R^n \times \cQ} 
 [u^{\ep_m}_i(x,t) - \psi^{\ep_m,\al}_i(x,y,t)]
 = u^{\ep_m}_{i_{m,\al}} (x_{m,\al},t_{m,\al}) - \psi^{\ep_m,\al}_{i_{m,\al}}(x_{m,\al},y_{m,\al},t_{m,\al})
 \end{equation}
 and up to passing some subsequences
\begin{align*}
&(x_{m,\al}, y_{m,\al} ,t_{m,\al}) \to (x_m, x_m, t_m)\ \text{as } \al \to 0,\\
&i_{m,\al} \to i_m \in \{1,2\} \ \text{as } \al \to 0,\\
&(x_m,t_m) \to (x_0,t_0)\ \text{as } m \to \infty,\\
&\lim_{m\to \infty} \lim_{\al \to 0} u^{\ep_m}_{i_{m,\al}}(x_{m,\al},t_{m,\al}) = W(x_0,t_0).
\end{align*}
 Choose $j_{m,\al}, j_m \in \{1,2\}$ such that 
 $\{i_{m,\al},j_{m,\al}\}=\{i_m, j_m\}=\{1,2\}$.
 By the definition of viscosity solutions, we have
 \begin{equation} \label{test1}
 \phi_t(x_{m,\al},t_{m,\al})+
 H_{i_{m,\al}} (\dfrac{x_{m,\al}}{\ep_m}, D\phi(x_{m,\al},t_{m,\al}) 
 +\dfrac{x_{m,\al}-y_{m,\al}}{\al^2})
 +\dfrac{1}{\ep_m} (u^{\ep_m}_{i_{m,\al}}-u^{\ep_m}_{j_{m,\al}})(x_{m,\al},t_{m,\al}) \le 0.
 \end{equation}
Since $(v_1,v_2)$ is a supersolution of $\EP$, we have
 \begin{equation} \label{test2}
 H_{i_{m,\al}}(\dfrac{y_{m,\al}}{\ep_m},P+\dfrac{x_{m,\al}-y_{m,\al}}{\al^2})
 +(v_{i_{m,\al}} -v_{j_{m,\al}})(\dfrac{y_{m,\al}}{\ep_m}) \ge \ol{H}(P).
 \end{equation}
 Let $\al \to 0$ in \eqref{test1} and \eqref{test2} to derive
 \begin{equation}\label{test3}
 \phi_t(x_{m},t_{m})+
 H_{i_{m}} (\dfrac{x_{m}}{\ep_m}, D\phi(x_{m},t_{m}) 
 +Q_m)
 +\dfrac{1}{\ep_m} (u^{\ep_m}_{i_{m}}-u^{\ep_m}_{j_{m}})(x_{m},t_{m}) \le 0
 \end{equation}
 and
  \begin{equation} \label{test4}
 H_{i_{m}}(\dfrac{x_{m}}{\ep_m},P+Q_m)
 +(v_{i_{m}} -v_{j_{m}})(\dfrac{x_{m}}{\ep_m}) \ge \ol{H}(P), 
 \end{equation}
 where $Q_m:= \lim_{\al \to 0} (x_{m,\al}-y_{m,\al})/\al^2$. 
 Noting that the correctors $v_i$ are Lipschitz continuous due to the coercivity of 
 $H_i$, we see that $|Q_m|\le C$ for $C>0$ which is independent of $m$. 
 Combine \eqref{test3} with \eqref{test4} to get
 \begin{align*}
 \phi_t (x_m,t_m) +\ol{H}(P) &\le
 H_{i_m}(\dfrac{x_m}{\ep_m},P+Q_m)-
   H_{i_m} (\dfrac{x_m}{\ep_m}, D\phi(x_m,t_m) + Q_m)\\
   &\quad+\dfrac{1}{\ep_m} 
   [u^{\ep_m}_{j_m} (x_m,t_m) - (\phi(x_m,t_m)+\ep_m v_{j_m}(\dfrac{x_m}{\ep_m}))]\\
   &\quad - \dfrac{1}{\ep_m}
    [u^{\ep_m}_{i_m} (x_m,t_m) - (\phi(x_m,t_m)+\ep_m v_{i_m}(\dfrac{x_m}{\ep_m}))]\\ 
    &\le \sig(| P - D\phi(x_m,t_m)|) 
 \end{align*}
for some modulus $\sig$.
Letting $m\to \infty$, we get the result.

We finally prove that $u$ is Lipschitz continuous. 
We can easily see that $\ol{f}\pm Mt$ are a supersolution and a subsolution 
of \eqref{HJ.limit}, respectively, for $M>0$ large enough. 
By the comparison principle for \eqref{HJ.limit} we have 
$|u(x,t)-\ol{f}(x)|\le Mt$ for all $(x,t)\in\cQ$. 
Moreover, the comparison principle for \eqref{HJ.limit} also yields that
\begin{align*}
&\sup_{x\in \R^n}|u(x,t+s)-u(x,t)|\le
\sup_{x\in \R^n} |u(x,s)-\ol{f}(x)|\le Ms \ \text{for all} \  t,s\ge 0, \ \text{and} \\
& \sup_{x \in \R^n} |u(x+z,t)-u(x,t)| \le
\sup_{x \in \R^n} |\ol{f}(x+z)-\ol{f}(x)| \le r|z| \ \text{for all} \ z \in \R^n, \ t \ge 0.
\end{align*}
The proof is complete.
\end{proof}


\section{Initial layers and Rate of convergence} \label{layer}

\subsection{Inner solutions, Outer solutions, and Matched solutions}
We first derive inner solutions, outer solutions and perform
 the matching asymptotic expansion method to find matched solutions
in a heuristic mode.

As we already obtained in Section \ref{homo}, 
outer solutions are same as the limit $u$ give in Theorem \ref{thm:main1}.
Now we need to find a right scaling for inner solutions.
We let
$$
w_i^\ep(x,t)=u_i^\ep(x,\ep t) \ \text{for } i=1,2,
$$
and plug into $\rm (C_\ep)$ to obtain
\begin{numcases}
{(\textrm{I}_\ep) \hspace{1cm}}
(w^\ep_{1})_t + \ep H_{1}(\dfrac{x}{\ep},Dw^\ep_{1}) 
+( w^\ep_{1}-w^\ep_{2}) = 0
& in $\R^n\times (0,T/\ep)$, \nonumber \\
(w^\ep_{2})_t +\ep H_{2}(\dfrac{x}{\ep},Dw^\ep_{2}) 
+(w^\ep_{2}-w^\ep_{1}) = 0
& in $\R^n \times (0,T/\ep)$, \nonumber \\
w^\ep_{i}(x,0)=f_{i}(x)
& 
on $\R^{n}$ for $i=1,2$.
\nonumber
\end{numcases}
We next assume that $w_i^\ep$ have the asymptotic expansions of the form
$$
w_i^\ep(x,t)=w_i(x,t)+\ep w_{i1}(x,t)+ \ep^2 w_{i2}(x,t) \cdots,
\ \text{for } i=1,2.
$$
It is then relatively straightforward to see that $(w_1,w_2)$ solves
\begin{numcases}
{(\textrm{I}) \hspace{1cm}}
(w_{1})_t 
+( w_{1}-w_{2}) = 0
& in $\R^n \times (0,\infty)$, \nonumber \\
(w_{2})_t 
+(w_{2}-w_{1}) = 0
& in $\R^n \times (0,\infty)$, \nonumber \\
w_{i}(x,0)=f_{i}(x)
& 
on $\R^{n}$ for $i=1,2$.
\nonumber
\end{numcases}
Thus, we can compute the explicit formula for the inner solutions 
\begin{align*}
&(w_1(x,t),w_2(x,t))\\
=&\, 
\left (\dfrac{f_1(x)+f_2(x)}{2} +\dfrac{f_1(x)-f_2(x)}{2} e^{-2t},\
\dfrac{f_1(x)+f_2(x)}{2} +\dfrac{f_2(x)-f_1(x)}{2} e^{-2t} \right).
\end{align*}
The final step is to obtain the matched solutions.
We have in this particular situation
$$
\lim_{t \to 0} u(x,t)=
\lim_{t \to \infty} w_i(x,t)=\dfrac{f_1(x)+f_2(x)}{2},
$$
which shows that the common part of the inner and outer solutions is 
$(f_1+f_2)(x)/2$.
Hence, the matched solutions are
\begin{align*}
&\left ( u(x,t)+w_1(x,\dfrac{t}{\ep}) - \dfrac{f_1(x)+f_2(x)}{2},
u(x,t)+w_2(x,\dfrac{t}{\ep}) - \dfrac{f_1(x)+f_2(x)}{2} \right )\\
=&\, 
\left ( u(x,t)+\dfrac{f_{1}(x)-f_{2}(x)}{2} e^{-\frac{2t}{\ep}},
u(x,t)+\dfrac{f_{2}(x)-f_{1}(x)}{2} e^{-\frac{2t}{\ep}} \right )\\
=&\, 
(m_1^\ep(x,t),m_2^\ep(x,t)), 
\end{align*}
where $m_{i}^{\ep}$ are the functions defined by \eqref{matched-func}.

As we can see, the matched solutions contain the layer parts which are essentially 
the same like the subsolutions and supersolutions that we build in Lemma \ref{lem:barrier}.
For any fixed $t>0$, we can see that $(m_1^\ep(x,t),m_2^\ep(x,t))$
converges to $(u(x,t),u(x,t))$ exponentially fast.
But for $t=O(\ep)$ then we do not have such convergence.
In particular, we have  $(m_1^\ep(x,\ep),m_2^\ep(x,\ep))$
converges to $\left (u(x,t)+(f_{1}-f_{2})(x)/(2e^{2}),
u(x,t)+(f_{2}-f_{1})(x)/(2e^{2}) \right)$.
On the other hand, the fact that $((m_1^\ep)_t, (m_2^\ep)_t)$
is not bounded also give us an intuition about the 
unboundedness of $((u_1^\ep)_t, (u_2^\ep)_t)$. 

It is therefore interesting if we can study the behavior of the difference
between the real solutions $(u_1^\ep,u_2^\ep)$
and the matched solutions $(m_1^\ep, m_2^\ep)$.

\subsection{Rate of convergence to matched solutions}\label{subsec:rate}

In this subsection, 
we assume further that
\begin{itemize}
\item[(A3)]
$H_i$ are (uniformly) Lipschitz in the $p$-variable for $i=1,2$, i.e.
there exists a constant $C_H>0$
such that 
\[
|H_{i}(\xi,p)-H_{i}(\xi,q)|\le C_H|p-q| \ 
\text{for all} \ \xi\in\T^n 
\ \text{and} \ p,q\in \R^n.
\] 
\end{itemize}

We now prove Theorem \ref{thm:main2} by 
splitting $\R^n \times [0,T]$ into two parts, which are
$\R^n \times [0,\ep|\log \ep|]$ and $\R^n \times [\ep|\log \ep|, T]$.
For the part of small time
$\R^n \times [0,\ep|\log \ep|]$, we use the
barrier functions in Lemma \ref{lem:barrier} and the effective equation to obtain
the results. The $L^\infty$-bounds of $|u_i^\ep-m_i^\ep|$ for $i=1,2$
on $\R^n \times [\ep|\log \ep|, T]$ can be obtained by using techniques
similar to those
of Capuzzo-Dolcetta and Ishii \cite{CDI}.

\begin{prop}[Initial Layer]\label{prop:layer}
There exists $C>0$ such that
$$
|(u^\ep_i-m_{i}^{\ep})(x,t)| \le C\ep|\log \ep|
\ 
\text{for all} \ (x,t)\in \R^n\times [0,\ep|\log \ep|] \ 
\text{and} \ i=1,2. 
$$
\end{prop}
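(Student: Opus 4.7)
The strategy is essentially a direct squeeze argument: the barrier functions from Lemma~\ref{lem:barrier} already contain the exponential initial layer profile $e^{-2t/\ep}$ that defines $m_i^\ep$, so the discrepancy between the barriers and the matched solutions reduces to the difference between the effective solution $u$ and the effective initial datum $\bar f = (f_1+f_2)/2$, which is controlled by the Lipschitz continuity of $u$ established at the end of the proof of Theorem~\ref{thm:main1}.

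More concretely, the plan is as follows. Let $C_0>0$ denote the constant appearing in Lemma~\ref{lem:barrier}, so that $\varphi_i^- \le u_i^\ep \le \varphi_i^+$ on $\cQ$ for $i=1,2$. A direct inspection of \eqref{func:barrier} and \eqref{matched-func} gives the key algebraic identity
\begin{equation*}
\varphi_i^\pm(x,t) - m_i^\ep(x,t) \;=\; \bar f(x) - u(x,t) \pm C_0 t,
\end{equation*}
since the $e^{-2t/\ep}$ terms in $\varphi_i^\pm$ and in $m_i^\ep$ cancel exactly for both $i=1$ and $i=2$.

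Next, I would invoke the Lipschitz estimate for $u$ obtained at the end of the proof of Theorem~\ref{thm:main1}: there exists $M>0$ (depending on $\|Df_1\|_\infty$, $\|Df_2\|_\infty$ and on the Hamiltonians through the constant $C_0$) such that $|u(x,t)-\bar f(x)|\le Mt$ for all $(x,t)\in\cQ$. Combining this with the identity above yields
\begin{equation*}
-(M+C_0)t \;\le\; \varphi_i^-(x,t) - m_i^\ep(x,t) \;\le\; u_i^\ep(x,t) - m_i^\ep(x,t) \;\le\; \varphi_i^+(x,t) - m_i^\ep(x,t) \;\le\; (M+C_0)t,
\end{equation*}
for all $(x,t)\in\cQ$ and $i=1,2$. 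Restricting to $t\in[0,\ep|\log\ep|]$ gives $|u_i^\ep - m_i^\ep|(x,t)\le (M+C_0)\ep|\log\ep|$, which is the claim with $C:=M+C_0$.

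There is no serious obstacle here; the only thing to verify carefully is the cancellation of the exponential layer in $\varphi_i^\pm - m_i^\ep$, which is why the particular profile $e^{-2t/\ep}$ with the specific sign conventions chosen in \eqref{func:barrier} and \eqref{matched-func} matters. The restriction to the short time interval $[0,\ep|\log\ep|]$ is used only at the very end to convert the linear-in-$t$ bound into the stated rate $\ep|\log\ep|$; on the complementary interval $[\ep|\log\ep|,T]$ the exponential layer in $m_i^\ep$ is already negligible of order $\ep^2$, so a different argument (based on the doubling-variables technique of Capuzzo-Dolcetta and Ishii) is needed, as anticipated in the paragraph preceding the proposition.
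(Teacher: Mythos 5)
Your proposal is correct and is essentially the same argument as the paper's: squeeze $u_i^\ep$ between the barriers $\varphi_i^\pm$ of Lemma~\ref{lem:barrier}, observe that the exponential layer cancels against the one in $m_i^\ep$, and then bound $|\bar f(x)-u(x,t)|$ by the Lipschitz-in-time estimate for $u$ coming from the proof of Theorem~\ref{thm:main1}. If anything, your write-up is slightly more careful than the paper's in keeping the time-Lipschitz constant $M$ (coming from $\bar f\pm Mt$ being a sub/supersolution of the effective equation) distinct from the spatial Lipschitz constant $C_u$ of $u$; the paper conflates the two, but this does not affect the validity of the argument.
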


\begin{proof}
We only prove the case $i=1$. 
By symmetry we can prove the case $i=2$. 
Pick a positive constant $C\ge \max_{i=1,2}\max_{(\xi, p)\in \R^n \times B(0,r)} |H_{i}(\xi,p)|$, 
where $r=\|Df_1\|_{L^\infty(\R^n)}+\|Df_2\|_{L^\infty(\R^n)}$ and 
note that $u$ is Lipschitz 
continuous with a Lipschitz constant 
$C_{u}:=(1/2)(\|Df_1\|_{\infty}+\|Df_2\|_{\infty})$. 
By Lemma \ref{lem:barrier} we have 
\begin{align*}
&\big|u^\ep_1(x,t)- (u(x,t)+\dfrac{f_{1}(x)-f_{2}(x)}{2} e^{-\frac{2t}{\ep}})\big|\\
\le\ & \big|u(x,t) - \dfrac{f_{1}(x)+f_{2}(x)}{2}\big| + Ct = |u(x,t)-u(x,0)| + Ct\\
\le \ & (C+C_{u})t \le (C+C_{u})\ep|\log \ep|
\end{align*}
for all $t \in [0,\ep|\log \ep|]$.
\end{proof}

\begin{prop} \label{prop:rate}
{Assume that {\rm(A3)} holds. }
For $T>0$ there exists $C=C(T)>0$ such that
$$
|u^\ep_i(x,t)-u(x,t)| \le C\ep^{1/3}
\quad \text{for } (x,t) \in \R^n \times [\ep|\log \ep|,T] \ \text{and} \ i=1,2.
$$
\end{prop}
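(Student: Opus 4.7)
The plan is to adapt the perturbed test function method combined with doubling of variables of Capuzzo-Dolcetta and Ishii~\cite{CDI} to the weakly coupled setting. First, I would use Proposition~\ref{prop:layer} to reduce to a bound at $t=\ep|\log\ep|$: since $m_i^\ep(x,\ep|\log\ep|) - u(x,\ep|\log\ep|) = \tfrac12(f_i-f_j)(x)\ep^2$, Proposition~\ref{prop:layer} gives $|u_i^\ep(x,\ep|\log\ep|) - u(x,\ep|\log\ep|)| \le C\ep|\log\ep|$ uniformly in $x$ and $i$. As $\ep|\log\ep| = o(\ep^{1/3})$, it then suffices to propagate this bound forward in time while losing only $O(\ep^{1/3})$.

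For the upper bound $u_i^\ep - u \le C\ep^{1/3}$ on $\R^n \times [\ep|\log\ep|, T]$ (the lower bound being symmetric), I would introduce the auxiliary function
\[
\Psi(x,t,y,s) := \max_{i=1,2}\bigl(u_i^\ep(x,t) - \ep v_i(x/\ep, P^*)\bigr) - u(y,s) - \frac{|x-y|^2}{2\alpha^2} - \frac{(t-s)^2}{2\beta^2} - \lambda t - \delta(|x|^2+|y|^2),
\]
where $(v_1(\cdot,P^*),v_2(\cdot,P^*))$ is the corrector pair from Proposition~\ref{prop:cell}, $P^*:=(\hat x - \hat y)/\alpha^2$ is read off at the maximum point, and $\alpha,\beta,\lambda,\delta>0$ are parameters to be optimized. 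The $\delta$-penalization guarantees a maximum $(\hat x,\hat t,\hat y,\hat s)$ with an argmax index $\hat i\in\{1,2\}$, and I denote by $\hat j$ its complement.

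At an interior maximum, combining the viscosity subsolution inequality for $u_{\hat i}^\ep$ against the test function obtained from the $(y,s)$-frozen half of $\Psi$ together with the $\ep v_{\hat i}$ term, with the viscosity supersolution inequality for $u$ at $(\hat y,\hat s)$, then invoking (A3) and the cell problem identity
\[
H_{\hat i}\bigl(\hat x/\ep,\, P^* + Dv_{\hat i}(\hat x/\ep, P^*)\bigr) + v_{\hat i}(\hat x/\ep, P^*) - v_{\hat j}(\hat x/\ep, P^*) = \ol H(P^*),
\]
yields, after absorbing the Lipschitz errors produced by the $2\delta\hat x$, $2\delta\hat y$ perturbations,
\[
\lambda - (v_{\hat i} - v_{\hat j})(\hat x/\ep, P^*) + \tfrac{1}{\ep}\bigl(u_{\hat i}^\ep - u_{\hat j}^\ep\bigr)(\hat x,\hat t) \le R(\alpha,\beta,\delta,\ep).
\]
The decisive observation is that $\hat i$ being the argmax of $u_i^\ep - \ep v_i(\cdot/\ep, P^*)$ at $(\hat x,\hat t)$ forces
\[
\tfrac{1}{\ep}\bigl(u_{\hat i}^\ep - u_{\hat j}^\ep\bigr)(\hat x,\hat t) \ge (v_{\hat i} - v_{\hat j})(\hat x/\ep, P^*),
\]
and this cancels the corrector difference exactly, leaving $\lambda \le R(\alpha,\beta,\delta,\ep)$. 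Choosing $\lambda$ strictly above this error rules out interior maxima, so the supremum of $\Psi$ is attained on $\{t=\ep|\log\ep|\}$, where the initial layer bound yields the estimate after $\delta\to 0$ and an optimal choice of $\alpha, \beta$ against $\ep$.

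\textbf{Main obstacle.} The crux is handling the weakly coupled switching term $\ep^{-1}(u_{\hat i}^\ep - u_{\hat j}^\ep)$ inside the doubling-of-variables framework. The trick is to place the correctors $v_i$ \emph{inside} the $\max_i$ rather than outside as a single perturbed test function; the argmax property then produces precisely the one-sided lower bound on the switching term needed to cancel against the $(v_{\hat i}-v_{\hat j})$ contribution in the cell problem. Once this cancellation is secured, balancing the $|x-y|^2/\alpha^2$ penalization, the $\ep$-scale of the correctors, and the Lipschitz modulus of $v_i(\cdot,P)$ in $P$ produces the stated $\ep^{1/3}$ rate.
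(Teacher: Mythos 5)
Your high-level plan matches the paper's: the key idea of placing the correctors inside the $\max_i$ and using the argmax property to produce the one-sided inequality
$\tfrac{1}{\ep}(u^\ep_{\hat i}-u^\ep_{\hat j})(\hat x,\hat t)\ge (v_{\hat i}-v_{\hat j})(\hat x/\ep,\cdot)$
which cancels the corrector difference coming from the cell problem, is exactly the device the paper uses (equations \eqref{layer:eqn2}--\eqref{layer:eqn4}). You have correctly identified the crux of the weakly coupled difficulty. However, there is a genuine gap in the way you set up the auxiliary function.

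The definition $P^*:=(\hat x-\hat y)/\alpha^2$ is circular: $\Psi$ depends on $P^*$ through the term $\ep v_i(x/\ep,P^*)$, but $(\hat x,\hat y)$ is defined as the maximizer of $\Psi$, so $P^*$ cannot be ``read off'' before $\Psi$ is defined. There are two ways out, and each reveals a missing ingredient. If you instead let the $P$-slot run with the variables, i.e.\ use $v_i(x/\ep,(x-y)/\alpha^2)$, then differentiating through the second argument requires a Lipschitz estimate for $P\mapsto v_i(\cdot,P)$, which the undiscounted correctors of Proposition \ref{prop:cell} do not possess (they are only determined up to additive constants and no continuity in $P$ is established). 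This is precisely why the paper replaces them by the discounted correctors $v_i^\delta$, for which Lemma \ref{app:lem1}(i) gives $\delta|v_i^\delta(\xi,P)-v_i^\delta(\xi,Q)|\le C|P-Q|$, and this Lipschitz rate $C/\delta$ is exactly one of the quantities that gets balanced against $\ep^\beta$ to produce the exponent $1/3$ (the paper takes $\delta=\ep^\theta$ with $\theta=\beta=1/3$). Alternatively, if you genuinely freeze $P^*$, the viscosity inequality from $u^\ep_{\hat i}$ is tested at a gradient value $(\hat x-\hat y)/\alpha^2$ that has no reason to equal $P^*$, so the cell identity at $P^*$ does not line up with it. There is also a second technical device you skip: the paper does not differentiate directly through $v_i^\delta(x/\ep,(x-z)/\ep^\beta)$ but introduces the extra variables $\xi,z$ and the penalization $|x-\ep\xi|^2/(2\alpha)+|x-z|^2/(2\alpha)$ so that each viscosity inequality (for $u^\ep_{\hat i}$ and for $v^\delta_{\hat i}$) is tested against a smooth function in its own argument. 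Without the discounted correctors and this decoupling step, the parameter balancing you describe at the end cannot be carried out, so the $\ep^{1/3}$ rate is not actually reached.
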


\begin{lem} \label{app:lem1}
{Assume that {\rm(A3)} holds. }
For each $\del>0$ {and $P\in\R^n$,} 
there exists a unique solution $(v_1^\del, v_2^\del) \in C^{0,1}(\T^n)^2$ of 
\begin{numcases}
{({\rm E}^\del_P) \hspace{1cm}}
H_{1}(\xi,P+Dv^\del_{1}(\xi,P))+(1+\del)v^\del_{1}(\xi,P)-v^\del_{2}(\xi,P)= 0
& \textrm{in }  $\T^{n}$,\ \nonumber\\
H_{2}(\xi,P+Dv^\del_{2}(\xi,P))+(1+\del)v^\del_{2}(\xi,P)-v^\del_{1}(\xi,P)= 0
& \textrm{in } $\T^{n}$.  \nonumber
\end{numcases}
Moreover, 
\begin{itemize}
\item[(i)]  there exists a constant $C>0$ independent of 
$\del$ such that
$$
\del |v^\del_i(\xi,P) - v^\del_i(\xi,Q)| \le C|P-Q| 
\ \text{for } \xi\in \T^n, P,Q \in \R^n\ \text{and} \ i=1,2; 
$$
\item[(ii)] for each $R>0$, there exists a constant $C=C(R)>0$
independent of $\del$ such that
$$
|\del v_i^\del(\xi,P) + \ol{H}(P)| \le C(R) \del 
\ \text{for } \xi\in \T^n, P\in\ol{B}(0,R) \ \text{and} \ i=1,2.
$$
\end{itemize}
\end{lem}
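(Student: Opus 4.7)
The plan is to treat $({\rm E}^\delta_P)$ as a discounted (monotone) approximation of the cell problem $({\rm E}_P)$. Because of the coupling structure (the off-diagonal entries have the right sign and the diagonal coefficient $1+\delta$ strictly dominates the off-diagonal coefficient $1$), the system $({\rm E}^\delta_P)$ satisfies a comparison principle on $\T^n$ for bounded continuous sub/supersolutions (see, e.g., \cite{EL,IK}). Combining this with Perron's method applied to the pair of sub/supersolutions given by the constants $(-M/\delta,-M/\delta)$ and $(M/\delta,M/\delta)$, with $M\ge \max_i\sup_{\xi}|H_i(\xi,P)|$, yields a unique bounded solution; Lipschitz regularity then follows from the coercivity assumption (A1), so $(v_1^\delta,v_2^\delta)\in C^{0,1}(\T^n)^2$.

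For part (i) the idea is a maximum principle argument in the variables $(\xi,i)$. Consider $\Phi_i(\xi):=v_i^\delta(\xi,P)-v_i^\delta(\xi,Q)$ and let $M:=\max_{i\in\{1,2\}}\max_{\xi\in \T^n}\Phi_i(\xi)$, attained at some $(\xi_0,i_0)$; choose $j_0$ with $\{i_0,j_0\}=\{1,2\}$. Using the doubling-of-variables technique applied to $v_{i_0}^\delta(\cdot,P)$ (as a subsolution of its equation) and $v_{i_0}^\delta(\cdot,P)-M$ translated appropriately, the viscosity inequalities at the doubled maximum combine with the Lipschitz assumption (A3) to yield
\[
(1+\delta)\Phi_{i_0}(\xi_0)-\Phi_{j_0}(\xi_0)\le C_H|P-Q|.
\]
Since $\Phi_{j_0}(\xi_0)\le \Phi_{i_0}(\xi_0)=M$, this simplifies to $\delta M\le C_H|P-Q|$, and the corresponding minimum argument yields the matching lower bound. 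This gives the claimed estimate with $C=C_H$.

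For part (ii), let $(v_1,v_2)$ be a solution of $({\rm E}_P)$ given by Proposition \ref{prop:cell}; by (A1) the oscillation $\text{osc}_{\T^n}(v_i)$ is bounded by a constant $C(R)$ uniformly for $P\in\ol{B}(0,R)$, so after normalizing we may assume $\|v_i\|_{L^\infty(\T^n)}\le C(R)$. For any constant $c\in\R$, the pair $(v_1+c,v_2+c)$ satisfies pointwise
\[
H_i(\xi,P+Dv_i)+(1+\delta)(v_i+c)-(v_j+c)=\ol{H}(P)+\delta(v_i+c),
\]
so it is a subsolution of $({\rm E}^\delta_P)$ iff $\max_i(v_i+c)\le -\ol{H}(P)/\delta$ on $\T^n$. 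Choosing $c$ at the equality, the comparison principle gives $v_i+c\le v_i^\delta$, hence $\delta v_i^\delta+\ol{H}(P)\ge -\delta\,\text{osc}(v_i)\ge -C(R)\delta$. The symmetric choice produces a matching supersolution and the opposite inequality, completing (ii).

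The main obstacle is the rigorous execution of the max-principle argument in (i) at the level of viscosity solutions for the system: the derivative identification $Dv_{i_0}^P(\xi_0)=Dv_{i_0}^Q(\xi_0)$ is only formal, so one must work with the doubled functional $v_{i_0}^\delta(\xi,P)-v_{i_0}^\delta(\eta,Q)-|\xi-\eta|^2/(2\alpha^2)$, use the viscosity sub/super-inequalities for the corresponding equation in $({\rm E}^\delta_P)$ for the index $i_0$, and carefully pass to the limit $\alpha\to 0$ using the a priori Lipschitz bound on $v_i^\delta$ coming from (A1). Once this is done cleanly, combining the two viscosity inequalities together with (A3) and the inequality $\Phi_{j_0}(\xi_0)\le \Phi_{i_0}(\xi_0)$ yields the desired estimate.
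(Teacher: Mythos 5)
Your proposal is correct, and the overall strategy (existence and uniqueness from monotonicity plus constant barriers; Lipschitz regularity from (A1); (i) from (A3) plus comparison; (ii) by comparing against shifted solutions of $({\rm E}_P)$) is the right one. The mechanics differ from the paper's in two places, and it is worth noting the trade-offs. For part (i), the paper does not run a doubling-of-variables argument at all: it simply observes that, by (A3), the shifted pair $\bigl(v^\del_1(\cdot,P)\pm C_H|P-Q|/\del,\ v^\del_2(\cdot,P)\pm C_H|P-Q|/\del\bigr)$ is a supersolution and a subsolution, respectively, of $({\rm E}^\del_Q)$ (because the shift by a common constant $c$ produces the extra term $\del c$, which absorbs the $C_H|P-Q|$ coming from replacing $Q$ by $P$ inside $H_i$), and then applies the comparison principle for $({\rm E}^\del_Q)$ once. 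This sidesteps entirely the technical issues you flag at the end about identifying gradients at the max; your doubled-variable route works, but the barrier trick is both shorter and cleaner, and you may want to adopt it. For part (ii), your argument of shifting the cell solution $(v_1,v_2)$ by the extremal admissible constant $c$ and invoking comparison is a valid, fairly direct route; the paper instead first derives the intermediate bounds $\del\|v_i^\del\|_\infty\le C(1+|P|)$, $\|Dv_i^\del\|_\infty\le C(R)$ (by summing the two equations and using coercivity), and $\|v_1^\del-v_2^\del\|_\infty\le C(R)$, and then shows that $\min_{i,\xi}\del v_i^\del \le -\ol{H}(P)\le \max_{i,\xi}\del v_i^\del$ by a contradiction argument against shifted cell solutions. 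The two routes rest on the same comparison with $({\rm E}_P)$ and the same uniform Lipschitz estimate for the cell correctors over $P\in\ol{B}(0,R)$; your version is arguably a bit more economical, though you should make explicit (as you rely on it) that both $\|v_1\|_\infty$ and $\|v_2\|_\infty$ can be taken bounded by $C(R)$ after a single common shift, using the bound on $v_1-v_2$ that comes from the equation and the gradient bound.
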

\begin{proof}
By the classical result (see \cite{EL, IK}) we can easily see that 
there exists a unique solution $(v_1^{\del},v_2^{\del})$ 
of (E$_{P}^{\del}$) for any $P\in\R^n$. 
We see that 
$\left (v_1^\del(\cdot,P)\pm C_H |P-Q|/\del, 
v_2^\del(\cdot,P)\pm C_H |P-Q|/\del \right )$
are a supersolution and subsolution of (E$_Q^\del$), respectively, 
in view of (A3).
By the comparison principle for (E$_Q^\del$) we have 
$$
v_i^\del(\xi,P)-\dfrac{C_H |P-Q|}{\del}
\le v_i^\del(\xi,Q)
\le v_i^\del(\xi,P)+\dfrac{C_H |P-Q|}{\del}
\ \text{for} \ \xi \in \T^n, \ i=1,2,
$$
which completes (i).

Let $C_1(P)=\max_{i=1,2} \max_{\xi \in \T^n} |H_i(\xi,P)|$.
It is clear that 
{$(-C_1(P)/\del,-C_1(P)/\del)$ and $(C_1(P)/\del,C_1(P)/\del)$} 
are a subsolution and a supersolution of (E$_P^\del$), respectively.
Note that 
\[
|H_{i}(\xi,P)|\le |H_{i}(\xi,0)|+|H_{i}(\xi,0)-H_{i}(\xi,P)|
\le C(1+|P|) 
\]
for $C\ge \max_{i=1,2,\, \xi\in\T^n}|H_i(\xi,0)|\vee C_H$.
Therefore, 
by the comparison principle again we get 
\begin{equation}\label{CR1}
\del \| v_i^\del(\cdot,P) \|_{L^\infty(\T^n)} \le C_1(P) \le C(1+|P|). 
\end{equation}
Next, sum up the two equations of (E$_P^\del$) to get
$$
H_1(\xi,P+Dv_1^\del(\xi,P))+ H_2(\xi,P+Dv_2^\del(\xi,P)) \le 2C(1+|P|).
$$
Thus, for each $R>0$, there exists a constant $C=C(R)\ge0$ so that
\begin{equation}\label{CR2}
\|Dv_i^\del(\cdot,P)\|_{L^\infty(\T^n)} \le C(R) \ \text{for} \ |P| \le R 
\ \text{and} \ i=1,2.
\end{equation}
We look back at (E$_P^\del$) and take the inequalities \eqref{CR1}, \eqref{CR2}
into account to deduce that
\begin{equation}\label{CR3}
\|v_1^\del(\cdot,P)-v_2^\del(\cdot,P)\|_{L^\infty(\T^n)} \le C(R) \ \text{for} \ |P| \le R.
\end{equation}

Let 
$\mu^{+}:=\max_{i=1,2,\,\xi\in\T^n}\del v_{i}^{\del}(\xi,P)$ and 
$\mu^{-}:=\min_{i=1,2,\, \xi\in\T^n}\del v_{i}^{\del}(\xi,P)$. 
Then we have 
\begin{equation}\label{CR4}
\mu^{-}\le -\ol{H}(P)\le \mu^{+}.  
\end{equation}
Indeed, suppose that $\mu^{+}<-\ol{H}(P)$, then 
by the comparison principle we have 
$v_{i}^{\del}\ge w_{i}$ on $\T^n$ 
for any solution $(w_1,w_2,\ol{H}(P))$ of $\EP$. 
This is a contradiction, since for any $C_2\in\R$ 
$(w_1+C_2,w_2+C_2,\ol{H}(P))$ is a solution of $\EP$ too. 
Similarly we see that $\mu^{-}\le -\ol{H}(P)$. 
Combine \eqref{CR2}--\eqref{CR4} to get the desired conclusion of (ii). 
\end{proof}

{
We borrow some ideas from \cite{CDI} in the following proof.
}
\begin{proof}[Proof of Proposition {\rm \ref{prop:rate}}]
{
Let $(v_{1}^{\del}(\cdot, P), v_{2}^{\del}(\cdot, P))$ be 
the solution of (E$_{P}^{\del}$) for $P\in\R^n$. 
}
We consider the auxiliary functions
$$
\Phi_i(x,y,t,s)
:=
u^\ep_i(x,t)-u(y,s) - \ep v_i^\del(\dfrac{x}{\ep},\dfrac{x-y}{\ep^\beta})
-\dfrac{|x-y|^2+(t-s)^2}{2\ep^\beta} - K(t+s)
$$
for $i=1,2$,
where $\del=\ep^\theta$ and $\beta, \theta\in(0,1)$ and $K\ge0$ to be 
fixed later.

{
For simplicity of explanation we assume that 
$\Phi_i$ takes a global maximum on $\R^{2n}\times[\ep|\log\ep|,T]^{2}$ 
and let $(\hat{x},\hat{y},\hat{t},\hat{s})$ be a point such that 
\begin{equation} \label{layer:eqn1}
\max_{i=1,2} \max_{\R^n \times [\ep |\log \ep|, T]} \Phi_i(x,y,t,s)
= \Phi_1(\hat{x},\hat{y},\hat{t},\hat{s}).
\end{equation}
For a more rigorous proof we need to add the term $-\gam|x|^2$ to 
$\Phi_i$ for $\gam>0$. 
See the proof of Theorem 1.1 in \cite{CDI} for the detail. 
}
We first consider the case where $\hat{t}, \hat{s} >\ep |\log \ep|$. 
\smallskip

{\bf Claim.} 
If $0<\theta<1-\beta$, 
then there exists $M>0$ such that 
$(|\hat{x}-\hat{y}|+|\hat{t}-\hat{s}|)/\ep^\beta \le M$. 
\smallskip

We use $\Phi_1(\hat{x},\hat{y},\hat{t},\hat{s}) \ge \Phi_1(\hat{x},\hat{x},\hat{t},\hat{t})$, 
Lemma \ref{app:lem1} (i) and that $u$ is Lipschitz continuous to deduce that
\begin{align*}
\dfrac{|\hat{x}-\hat{y}|^2+|\hat{t}-\hat{s}|^2}{2\ep^\beta} &\le
|u(\hat{x},\hat{t})-u(\hat{y},\hat{s})| + 
\ep |v_1^\del(\dfrac{\hat{x}}{\ep},\dfrac{\hat{x}-\hat{y}}{\ep^\beta})
-v^\del_1(\dfrac{\hat{x}}{\ep},0)| + K|\hat{t}-\hat{s}|\\
\le&\, C_{u}(|\hat{x}-\hat{y}|+|\hat{t}-\hat{s}|) 
+C \ep \dfrac{1}{\ep^\theta} \dfrac{|\hat{x}-\hat{y}|}{\ep^\beta}
+K|\hat{t}-\hat{s}|\\
\le&\, 
C^{'}(|\hat{x}-\hat{y}|+|\hat{t}-\hat{s}|) 
\end{align*}
for some $C, C^{'}>0$, 
which implies the desired result.

\smallskip

We fix $(y,s)=(\hat{y},\hat{s})$ and notice that the function 
$$ 
(x,t) \mapsto u^\ep_1(x,t)- 
\ep v_1^\del(\dfrac{x}{\ep},\dfrac{x-\hat{y}}{\ep^\beta})
-\dfrac{|x-\hat{y}|^2+(t-\hat{s})^2}{2\ep^\beta} - Kt
$$ 
attains the maximum at $(\hat{x},\hat{t})$. 
For $\al>0$, we define the function $\psi$ by 
$$
\psi(x,\xi,z,t):=
 u^\ep_1(x,t)- 
\ep v_1^\del(\xi,\dfrac{z-\hat{y}}{\ep^\beta})
-\dfrac{|x-\hat{y}|^2+|t-\hat{s}|^2}{2\ep^\beta}
-\dfrac{|x-\ep \xi|^2+|x-z|^2}{2\al} - Kt. 
$$
Let $\psi$ attain the maximum at $(x_\al,\xi_\al, z_\al, t_\al)$ and 
then we may assume that 
$(x_\al,\xi_\al, z_\al, t_\al) \to (\hat{x},\hat{x}/\ep,\hat{x},\hat{t})$
as $\al \to 0$ up to passing a subsequence if necessary.
By the definition of viscosity solutions, we have
\begin{equation} \label{layer:eqn2}
K+\dfrac{t_\al-\hat{s}}{\ep^\beta}+
H_1(\dfrac{x_\al}{\ep},\dfrac{x_\al-\hat{y}}{\ep^\beta}+
\dfrac{x_\al-\ep \xi_\al}{\al} + \dfrac{x_\al-z_\al}{\al})
+\dfrac{1}{\ep}(u_1^\ep-u_2^\ep)(x_\al,t_\al) \le 0,
\end{equation}
and
\begin{equation} \label{layer:eqn3}
H_1(\xi_\al,\dfrac{z_\al-\hat{y}}{\ep^\beta}+\dfrac{x_\al -\ep \xi_\al}{\al}) 
+ (1+\del) v_1^\del(\xi_\al, \dfrac{z_\al -\hat{y}}{\ep^\beta})
 - v_2^\del(\xi_\al, \dfrac{z_\al -\hat{y}}{\ep^\beta}) \ge 0.
\end{equation}
Next, since $\psi(x_\al,\xi_\al,z_\al,t_\al) \ge \psi(x_\al,\xi_\al,x_\al,t_\al)$
we get 
$$
\dfrac{|x_\al-z_\al|^2}{2\al} \le
\ep (v_1^\del(\xi_\al,\dfrac{x_\al-\hat{y}}{\ep^\beta})-
v_1^\del(\xi_\al,\dfrac{z_\al-\hat{y}}{\ep^\beta}))
\le C \ep^{1-\theta-\beta} |x_\al -z_\al|
$$
by Lemma \ref{app:lem1} (i). 
Thus, $|x_\al-z_\al|/\al \le C\ep^{1-\theta-\beta}$.
Combine this with \eqref{layer:eqn2} and \eqref{layer:eqn3}, and 
send $\al \to 0$ to yield 
\begin{equation}\label{layer:eqn4}
K+\dfrac{\hat{t}-\hat{s}}{\ep^\beta}+
\ol{H}(\dfrac{\hat{x}-\hat{y}}{\ep^\beta})+
\dfrac{1}{\ep}(u_1^\ep-u_2^\ep)(\hat{x},\hat{t})
-v_1^\del(\dfrac{\hat{x}}{\ep},\dfrac{\hat{x}-\hat{y}}{\ep^\beta})+
v_2^\del(\dfrac{\hat{x}}{\ep},\dfrac{\hat{x}-\hat{y}}{\ep^\beta})-
C(\ep^\theta+\ep^{1-\theta-\beta}) \le 0.
\end{equation}
Similarly we fix $(x,t)=(\hat{x},\hat{t})$ and do a similar procedure 
to the above to obtain
\begin{equation}\label{layer:eqn5}
-K+\dfrac{\hat{t}-\hat{s}}{\ep^\beta}+\ol{H}(\dfrac{\hat{x}-\hat{y}}{\ep^\beta})+
C(\ep^\theta+\ep^{1-\theta-\beta}) \ge 0.
\end{equation}
Combining \eqref{layer:eqn4}, \eqref{layer:eqn5}, and \eqref{layer:eqn1}, we get
\begin{equation}\label{layer:eqn6}
2K \le C(\ep^\theta+\ep^{1-\theta-\beta}).
\end{equation} 
Now we choose $\theta=\beta=1/3$ and $K=K_1 \ep^{1/3}$ for $K_1$ large enough
to get the contradiction in \eqref{layer:eqn6}.
Hence either $\hat{t}=-\ep\log\ep$ or $\hat{s}=-\ep\log\ep$ holds.
The proof is complete immediately.
\end{proof}

{
Theorem \ref{thm:main2} is a straightforward result of 
Propositions \ref{prop:layer}, \ref{prop:rate}. 
}


\section{Properties of effective Hamiltonians}

\subsection{Elementary properties}\label{subsec:ele}
\begin{prop}\label{prop:elementary-effective} \ \\
{
{\rm (i)} \ 
{\rm (}Coercivity{\rm )} \ 
$\ol{H}(P)\to +\infty$ as $|P|\to\infty$. 
\\
{\rm (ii)} 
{\rm (}Convexity{\rm )} \ 
If $H_i$ are convex in the $p$-variable for $i=1,2$,  
then $\ol{H}$ is convex.
}
\end{prop}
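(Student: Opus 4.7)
The plan is to test the cell problem at a global maximum of the corrector. Let $(v_1, v_2) \in C(\T^n)^2$ be a periodic solution of $({\rm E}_P)$ provided by Proposition~\ref{prop:cell}, and let $(\xi_0, i_0) \in \T^n \times \{1,2\}$ be a point at which $\max_{i} \max_{\xi} v_i(\xi)$ is attained. Without loss of generality $i_0 = 1$, so $v_1(\xi_0) \ge v_2(\xi_0)$. Applying the viscosity subsolution property of the first equation of $({\rm E}_P)$ at $\xi_0$ with the constant test function $\phi \equiv v_1(\xi_0)$ (so $D\phi(\xi_0) = 0$) yields
\[
H_1(\xi_0, P) + v_1(\xi_0) - v_2(\xi_0) \le \ol{H}(P),
\]
and dropping the nonnegative term $v_1(\xi_0) - v_2(\xi_0)$ gives $H_1(\xi_0, P) \le \ol{H}(P)$. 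The uniform coercivity of $H_1$ from {\rm (A1)} then forces $\ol{H}(P) \to +\infty$ as $|P| \to \infty$.

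\textbf{Plan for (ii).} Fix $P_1, P_2 \in \R^n$ and $\lambda \in [0,1]$, and set $P := \lambda P_1 + (1-\lambda) P_2$ and $c := \lambda \ol{H}(P_1) + (1-\lambda) \ol{H}(P_2)$. Let $(v_1^k, v_2^k)$ be a Lipschitz $\T^n$-periodic solution of $({\rm E}_{P_k})$ for $k = 1, 2$; Lipschitz continuity of the correctors follows from {\rm (A1)} via the argument in (i) applied to $(E_{P_k})$. Set $w_i := \lambda v_i^1 + (1-\lambda) v_i^2$ and let $j$ be such that $\{i,j\} = \{1,2\}$. For convex Hamiltonians, Lipschitz viscosity sub/supersolutions coincide with a.e.\ sub/supersolutions, so at a.e.\ $\xi \in \T^n$ the convexity of $H_i$ in $p$ gives
\[
H_i(\xi, P + Dw_i) + w_i - w_j \le \lambda\bigl[H_i(\xi, P_1 + Dv_i^1) + v_i^1 - v_j^1\bigr] + (1-\lambda)\bigl[H_i(\xi, P_2 + Dv_i^2) + v_i^2 - v_j^2\bigr] = c,
\]
so $(w_1, w_2)$ is a Lipschitz periodic viscosity subsolution of the cell system with right-hand side $c$.

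\textbf{Final step and main obstacle.} It remains to deduce $\ol{H}(P) \le c$ from the existence of such a subsolution. The plan is to compare $(w_1, w_2)$ against the discounted problem $({\rm E}_P^\delta)$ of Lemma~\ref{app:lem1}. Setting $\tilde{w}_i := w_i - c/\delta - M$ with $M := \max_{i, \xi}|w_i(\xi)|$, a short computation shows that $(\tilde{w}_1, \tilde{w}_2)$ is a periodic subsolution of $({\rm E}_P^\delta)$; hence the comparison principle for $({\rm E}_P^\delta)$ gives $\tilde{w}_i \le v_i^\delta$ on $\T^n$. Multiplying by $-\delta$ and sending $\delta \to 0$, Lemma~\ref{app:lem1}(ii) then forces $\ol{H}(P) \le c$. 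The only genuinely delicate point in the whole argument is this calibration against the discounted problem; apart from it, (i) is immediate and the bulk of (ii) is a standard convex-combination-of-correctors computation.
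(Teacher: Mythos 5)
Your proposal is correct, and for both parts it takes a route that is genuinely different from (though compatible with) the paper's.

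For (i), you work directly with the undiscounted cell problem $(\mathrm{E}_P)$: at the global maximum of the correctors the coupling term has a favorable sign, so testing the subsolution inequality with a constant test function immediately yields $H_{i_0}(\xi_0,P)\le\ol{H}(P)$. The paper instead passes through the discounted approximation $(\mathrm{E}_P^\delta)$: at the max of $v^\delta_{i_0}$ one gets $H_{i_0}(\xi_0,P)\le -\delta v^\delta_{i_0}(\xi_0,P)$, and then one sends $\delta\to 0$ using $\delta v^\delta_i\to -\ol{H}(P)$. Your version avoids the approximation entirely; both give the same bound $\ol{H}(P)\ge\min_{i,\xi}H_i(\xi,P)$.

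For (ii), your argument is direct: you show $(w_1,w_2)$ with $w_i=\lambda v_i^1+(1-\lambda)v_i^2$ is a Lipschitz subsolution of the cell system at $P$ with right-hand side $c=\lambda\ol{H}(P_1)+(1-\lambda)\ol{H}(P_2)$, and then calibrate against $(\mathrm{E}_P^\delta)$ to conclude $\ol{H}(P)\le c$. The paper proves the same inequality by contradiction: assuming strict non-convexity and choosing $\delta$ small, it compares the perturbed $w_i$ against the true corrector at $(P+Q)/2$ and derives a contradiction from the fact that the corrector can be shifted by an arbitrary additive constant. Your calibration step is really a re-derivation of the ``$\ol{H}(P)\le c$ for any subsolution constant $c$'' direction of the representation formula in Proposition~\ref{prop:rep-1}; the paper proves that proposition separately, later, by a direct comparison with a $(1+\ep)$-perturbed monotone operator rather than with the discounted problem. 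A minor remark: the passage from a.e.\ subsolution to viscosity subsolution (mollification plus Jensen) is a real step, used implicitly both by you and by the paper when asserting the convex combination is a subsolution; your phrasing ``Lipschitz viscosity sub/supersolutions coincide with a.e.\ sub/supersolutions'' overstates the case for supersolutions (where the equivalence fails), but you only invoke the subsolution direction, so nothing breaks. Also note that Lemma~\ref{app:lem1}(ii), which you cite, is stated under the extra hypothesis~(A3); for your purposes one only needs the qualitative convergence $\delta v_i^\delta\to -\ol{H}(P)$, which holds without~(A3) by a standard compactness argument, so the reliance is harmless but worth flagging.
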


\begin{proof}
(i) 
For each $\del>0$ and $P\in\R^n$,
let 
$(v_{1}^{\del}, v_{2}^{\del})$ be a solution of (E$_{P}^{\del}$) and 
without loss of generality, we may assume that
$v_{1}^{\del}(\xi_0,P)=\max_{i=1,2,\, \xi\in\T^n}v_{i}^{\del}(\xi,P)$
for some $\xi_0 \in \T^n$.
By the definition of viscosity solutions we have 
$H_1(\xi_0,P)\le H_{1}(\xi_0,P)+(v_{1}^{\del}-v_{2}^{\del})(\xi_0,P)\le 
-\del v_{1}^{\del}(\xi_0,P)$. 
We let $\del \to 0$ to derive that
$\ol{H}(P) \ge \min_{i=1,2,\, \xi \in\T^n} H_i(\xi,P)$.
Since $H_i$ are coercive for $i=1,2$, so is $\ol{H}$.

(ii) We argue by contradiction. 
Suppose that $\ol{H}$ is not convex and 
then there would exist $P, Q \in \R^n$ such that
\begin{equation}\label{eff1}
2\ep_0:=\ol{H}(\dfrac{P+Q}{2})-\dfrac{\ol{H}(P) + \ol{H}(Q)}{2}>0. 
\end{equation}
We define the functions $w_i\in C(\T^n)$ so that
$w_i(\xi):=(v_i(\xi,P)+v_i(\xi,Q))/2$ for $i=1,2$, 
where $(v_{1}(\cdot, P), v_{2}(\cdot, P))$ and 
$(v_{1}(\cdot, Q), v_{2}(\cdot, Q))$ are solutions of 
$\EP$ and $\EQ$, respectively. 
Due to the convexity of $H_i$ for $i=1,2$ we have 
$$
\begin{cases}
H_1(\xi,\dfrac{P+Q}{2}+Dw_1(\xi))
+w_1(\xi)-w_2(\xi) 
\le \dfrac{\ol{H}(P)+\ol{H}(Q)}{2},\\
H_2(\xi,\dfrac{P+Q}{2}+Dw_2(\xi))
+w_2(\xi)-w_1(\xi)
\le \dfrac{\ol{H}(P)+\ol{H}(Q)}{2}.
\end{cases}
$$

By \eqref{eff1},
there exists a small constant $\del>0$ such that
$$
\begin{cases}
H_1(\xi,\dfrac{P+Q}{2}+Dw_1(\xi))
+(1+\del)w_1(\xi)-w_2(\xi) 
\le \ol{H}(\dfrac{P+Q}{2})-\ep_0,\\
H_2(\xi,\dfrac{P+Q}{2}+Dw_2(\xi))
+(1+\del)w_2(\xi)-w_1(\xi)
\le  \ol{H}(\dfrac{P+Q}{2})-\ep_0, 
\end{cases}
$$
and
$$
\begin{cases}
H_1(\xi,\dfrac{P+Q}{2}+Dv_1(\xi,\dfrac{P+Q}{2}))
+(1+\del)v_1(\xi,\dfrac{P+Q}{2})-v_2(\xi,\dfrac{P+Q}{2})
\ge \ol{H}(\dfrac{P+Q}{2})-\ep_0,\\
H_2(\xi,\dfrac{P+Q}{2}+Dv_2(\xi,\dfrac{P+Q}{2}))
+(1+\del)v_2(\xi,\dfrac{P+Q}{2})-v_1(\xi,\dfrac{P+Q}{2})
\ge \ol{H}(\dfrac{P+Q}{2})-\ep_0.
\end{cases}
$$
The usual comparison principle implies that
\begin{equation}\label{eff2}
\dfrac{v_i(\xi,P)+v_i(\xi,Q)}{2} \le v_i(\xi,\dfrac{P+Q}{2}) 
\qquad \mbox{for } i=1,2.
\end{equation}
Notice that \eqref{eff2} is still correct even if we replace
$v_i(\xi,(P+Q)/2)$ by $v_i(\xi,(P+Q)/2)+C_1$
for $i=1,2$ and for any $C_1 \in \R$, which yields
the contradiction.
\end{proof}

The uniqueness of the effective Hamiltonian for $\EP$ and 
the cell problem for single Hamilton--Jacobi equations give 
the following proposition. 
\begin{prop}\label{prop:ele1}
If $H_1=H_2=K$, then
$$
\ol{H}(P)=\ol{K}(P) \
\text{for all } P \in \R^n,
$$
where $\ol{K}$ is the effective Hamiltonian corresponding to $K$.
\end{prop}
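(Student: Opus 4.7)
The plan is to exploit the uniqueness part of Proposition \ref{prop:cell}: since $\ol{H}(P)$ is the \emph{unique} constant for which $({\rm E}_P)$ admits a $\T^n$-periodic continuous solution, it suffices to exhibit one such solution whose constant equals $\ol{K}(P)$.

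First I would recall the classical cell problem for the single equation (as in Lions--Papanicolaou--Varadhan \cite{LPV}): under (A1)--(A2), there exists a unique constant $\ol{K}(P)$ and a $\T^n$-periodic viscosity solution $w(\cdot,P)\in C(\R^n)$ of
\begin{equation*}
K(\xi, P + Dw(\xi,P)) = \ol{K}(P) \quad \text{in } \R^n.
\end{equation*}
Then I would simply set $v_1(\xi,P) := v_2(\xi,P) := w(\xi,P)$. Plugging this pair into $({\rm E}_P)$ with $H_1 = H_2 = K$, the coupling terms $v_1 - v_2$ and $v_2 - v_1$ vanish identically, and each equation reduces to the single cell problem above with constant $\ol{K}(P)$. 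Hence $(w(\cdot,P), w(\cdot,P))$ is a $\T^n$-periodic viscosity solution of $({\rm E}_P)$ with associated constant $\ol{K}(P)$.

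Applying the uniqueness statement of Proposition \ref{prop:cell} to this pair yields $\ol{H}(P) = \ol{K}(P)$, as desired. There is essentially no obstacle here: the main (already-established) inputs are the existence theory for the single-equation cell problem and the uniqueness of the constant in the coupled cell problem. The only minor point worth noting is that viscosity sub/supersolution tests for the coupled system \cite{EL, IK} reduce, at points where the coupling difference vanishes pointwise (which is automatic when $v_1 \equiv v_2$), to the single-equation tests, so no subtlety arises in verifying that $(w, w)$ is a viscosity solution of the system.
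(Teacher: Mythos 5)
Your argument is correct and is exactly what the paper intends: the paper simply remarks that the uniqueness of the constant in $({\rm E}_P)$ together with the classical single-equation cell problem yields the result, and your proof fills in precisely that reasoning by taking $v_1=v_2=w$ (the single-equation corrector), observing the coupling terms cancel, and invoking the uniqueness from Proposition \ref{prop:cell}.
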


{
\begin{prop}\label{prop:ele2}
If $H_i$ are homogeneous with degree $1$ in the $p$-variable 
for $i=1,2$, 
then $\ol{H}$ is positive homogeneous with degree $1$.
\end{prop}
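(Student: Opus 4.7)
The plan is to exploit the fact that the coupled cell problem $({\rm E}_P)$ is linear in $P$ and in the correctors under positive scaling, and to combine this with the uniqueness of the effective constant in Proposition \ref{prop:cell}.

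First I would dispose of the degenerate case $\lam=0$. Since $H_i(\xi,\cdot)$ is positively homogeneous of degree one on $\R^n$, continuity forces $H_i(\xi,0)=0$ for every $\xi\in\T^n$. Hence the trivial pair $(v_1,v_2)\equiv(0,0)$ is a classical, and therefore viscosity, solution of $({\rm E}_0)$ with ergodic constant $0$, and the uniqueness part of Proposition \ref{prop:cell} immediately yields $\ol H(0)=0$.

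For the main case $\lam>0$, fix $P\in\R^n$ and let $(v_1(\cdot,P),v_2(\cdot,P))$ be a $\T^n$-periodic viscosity solution of $({\rm E}_P)$ with constant $\ol H(P)$ as produced by Proposition \ref{prop:cell}. My ansatz is simply
\[
w_i(\xi):=\lam\,v_i(\xi,P),\qquad i=1,2.
\]
At the formal level, degree-one homogeneity gives $H_i(\xi,\lam P+Dw_i)=\lam H_i(\xi,P+Dv_i)$, while the coupling terms scale linearly as $w_i-w_j=\lam(v_i-v_j)$. Adding these two observations shows that $(w_1,w_2)$ satisfies
\[
H_i(\xi,\lam P+Dw_i(\xi))+w_i(\xi)-w_j(\xi)=\lam\ol H(P)\quad\text{in }\R^n,\qquad\{i,j\}=\{1,2\},
\]
so $(w_1,w_2)$ is a $\T^n$-periodic solution of $({\rm E}_{\lam P})$ with constant $\lam\ol H(P)$. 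Invoking the uniqueness of the effective constant one more time gives $\ol H(\lam P)=\lam\ol H(P)$, which is the desired positive homogeneity.

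The only step requiring any care beyond the formal calculation is to promote the above identity to the viscosity sense. Since $\lam>0$, multiplication by $\lam$ is order-preserving and carries sub/superdifferentials to sub/superdifferentials with gradients simply rescaled by $\lam$; thus if a test function $\varphi$ touches $v_i(\cdot,P)$ from above at $\xi_0$, then $\lam\varphi$ touches $w_i$ from above at $\xi_0$ with gradient $\lam D\varphi(\xi_0)$, and the viscosity inequalities transform correctly under the scaling. I do not anticipate a genuine obstacle here; the argument is essentially a one-line verification followed by an appeal to uniqueness of $\ol H$.
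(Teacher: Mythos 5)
Your argument is exactly the paper's: scale a solution $(v_1,v_2,\ol H(P))$ of $({\rm E}_P)$ by $\lam>0$ to obtain a solution $(\lam v_1,\lam v_2,\lam\ol H(P))$ of $({\rm E}_{\lam P})$, then invoke uniqueness of the effective constant. Your extra remarks on the $\lam=0$ case and on preservation of the viscosity inequalities under multiplication by a positive constant are correct but routine.
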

\begin{proof}
Let $(v_1,v_2,\ol{H}(P))$ be a solution of $\EP$ for any $P\in\R^n$. 
If $H_i$ is homogeneous with degree $1$ in the $p$-variable, 
then $(rv_1,rv_2,r\ol{H}(P))$ is a solution of (E$_{rP}$) for 
any $r>0$. 
Therefore by the uniqueness of the effective Hamiltonian 
we get the conclusion. 
\end{proof}
}

\begin{prop} \label{prop:max-H1H2}
We define the Hamiltonian $K$ as
$$
K(\xi,p):=\max \{ H_1(\xi,p), H_2(\xi,p) \}. 
$$
Let $\ol{K}$ be its corresponding effective Hamiltonian,then for all $P\in \R^n$,
$$
\ol{H}(P) \le \ol{K}(P).
$$
\end{prop}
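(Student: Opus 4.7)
The plan is to exhibit a suitable subsolution of the cell problem $(\text{E}_P)$ built out of a single-equation corrector for $K$, and then deduce the inequality via the approximate cell problem $(\text{E}_P^\delta)$ of Lemma \ref{app:lem1}. Since by Proposition \ref{prop:elementary-effective} (or its single-equation analogue due to Lions--Papanicolaou--Varadhan), the effective Hamiltonian is characterized uniquely through the solvability of the cell problem, and since the $\delta\to 0$ limit of $-\delta v_i^\delta$ equals $\ol{H}(P)$ by Lemma \ref{app:lem1}(ii), a comparison argument at the $\delta$-level will yield the desired bound.

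More concretely, first I would let $w \in C(\T^n)$ be a corrector for the single-equation cell problem associated with $K$, that is, a $\T^n$-periodic viscosity solution of
\[
K(\xi, P + Dw(\xi)) = \ol{K}(P) \quad \text{in } \R^n.
\]
Since $H_i(\xi, p) \le K(\xi, p)$ by construction of $K$, the pair $(w, w)$ satisfies, for $i=1,2$,
\[
H_i(\xi, P + Dw(\xi)) + w(\xi) - w(\xi)
= H_i(\xi, P + Dw(\xi))
\le K(\xi, P + Dw(\xi))
= \ol{K}(P)
\]
in the viscosity sense, so $(w,w)$ is a $\T^n$-periodic subsolution of $(\text{E}_P)$ with constant $\ol{K}(P)$ on the right-hand side.

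Next I would pass to the approximate problem. Let $(v_1^\delta, v_2^\delta)$ be the unique solution of $(\text{E}_P^\delta)$ given by Lemma \ref{app:lem1}. For $\delta > 0$ and any constant $C \in \R$, plugging $(w + C, w + C)$ into $(\text{E}_P^\delta)$ gives
\[
H_i(\xi, P + Dw) + (1+\delta)(w + C) - (w + C)
= H_i(\xi, P + Dw) + \delta(w + C)
\le \ol{K}(P) + \delta(w + C),
\]
so $(w+C, w+C)$ is a subsolution of $(\text{E}_P^\delta)$ provided $C$ is chosen negative enough that $\ol{K}(P) + \delta(w + C) \le 0$ uniformly on $\T^n$; this is possible by periodicity of $w$. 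The comparison principle for $(\text{E}_P^\delta)$ then yields $w + C \le v_i^\delta$ on $\T^n$, hence $\delta(w + C) \le \delta v_i^\delta$ on $\T^n$. Letting $\delta \to 0$ and invoking Lemma \ref{app:lem1}(ii), which gives $\delta v_i^\delta \to -\ol{H}(P)$ uniformly, I obtain $-\ol{K}(P) \le -\ol{H}(P)$, i.e.\ $\ol{H}(P) \le \ol{K}(P)$.

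The only delicate point is the consistency of the comparison step with the $\delta$-dependent shift of $C$: one must pick $C = C(\delta)$ of order $-\ol{K}(P)/\delta$, which is exactly the order of magnitude of $v_i^\delta$ (cf.\ \eqref{CR1}), so that $\delta C$ stays bounded and yields $-\ol{K}(P)$ in the limit. This is the main (mild) obstacle but is handled by the uniform bounds of Lemma \ref{app:lem1}.
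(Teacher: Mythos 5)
Your proof is correct, and its key idea coincides with the paper's: take a $\T^n$-periodic corrector $w$ for the single-equation cell problem
$K(\xi,P+Dw)=\ol{K}(P)$ and observe that, since $H_i\le K$ pointwise, $(w,w,\ol{K}(P))$ is a subsolution of $\EP$. Where you part ways with the paper is in how this subsolution is turned into the inequality $\ol{H}(P)\le\ol{K}(P)$: the paper simply cites Proposition~\ref{prop:rep-1} (the min--max representation of $\ol{H}$ as the infimum of constants $c$ for which $\EP$ admits a subsolution), whereas you redo a hands-on comparison through the discounted cell problem $({\rm E}_P^\delta)$, shifting the subsolution by a $\delta$-dependent constant so that it becomes a subsolution of the discounted system, comparing with $(v_1^\delta,v_2^\delta)$, and then sending $\delta\to0$ via Lemma~\ref{app:lem1}(ii). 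This is essentially a self-contained re-derivation of the one direction of Proposition~\ref{prop:rep-1} that the paper uses (its proof likewise runs a comparison through a $(1+\ep)$-perturbed system). Your version is longer but has the minor virtue of avoiding the forward reference: in the paper, Proposition~\ref{prop:max-H1H2} appears \emph{before} Proposition~\ref{prop:rep-1}, so citing the latter is a forward reference, whereas your route is independent of the representation formula.

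One small caveat worth flagging: you invoke Lemma~\ref{app:lem1}(ii), which is stated under the extra hypothesis (A3) ($H_i$ uniformly Lipschitz in $p$), but Proposition~\ref{prop:max-H1H2} does not assume (A3). For a \emph{fixed} $P$ the convergence $\delta v_i^\delta(\cdot,P)\to-\ol{H}(P)$ does not actually require (A3) --- the Lipschitz assumption enters only in making the constant uniform over $P$ in a ball --- so your argument remains valid, but you should say explicitly that you only need the pointwise-in-$P$ statement, which holds under (A1)--(A2) alone, rather than citing Lemma~\ref{app:lem1}(ii) verbatim.
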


\begin{proof}
For each $P \in \R^n$, there exists $\varphi(\cdot, P) \in C^{0,1}(\T^n)$
such that
$$
K(\xi,P+D\varphi(\xi,P))=\ol{K}(P).
$$
Thus $(\varphi(\cdot,P),\varphi(\cdot,P),\ol{K}(P))$ is a subsolution of $(\textrm{E}_P)$.
We hence get $\ol{K}(P) \ge \ol{H}(P)$ by Proposition \ref{prop:rep-1}.
\end{proof}

{
We give an example that we can calculate the effective Hamiltonian explicitly. 
}
\begin{example}
Let $n=1$ and $H_1(\xi,p)=|p|$, $H_2(\xi,p)=a(\xi)|p|$, where
$$
a(\xi):=\dfrac{1-(\frac{1}{8\pi^2}\cos(2\pi \xi)+\frac{1}{4\pi}\sin(2\pi \xi))}
{1+(\frac{1}{2}+\frac{1}{8\pi^2}) \cos(2\pi \xi)}
{>0}.
$$
By Proposition \ref{prop:ele2} we have 
$\ol{H}(P)=\ol{H}(1)P$ for $P \ge 0$.
Set 
$$
v_1(\xi,1):=\dfrac{1}{16\pi^3} \sin(2\pi \xi) -\dfrac{1}{8\pi^2} \cos (2\pi \xi), \
v_2(\xi,1):=( \dfrac{1}{4\pi}+\dfrac{1}{16\pi^3} ) \sin(2\pi \xi).
$$
Then we can confirm that $(v_1(\cdot,1), v_2(\cdot,1),1)$ 
is a solution of (E$_1$). 
Therefore $\ol{H}(1)=1$ and thus, 
$\ol{H}(P)=P$ for $P \ge 0$.

{
For any $P<0$ we have 
$\ol{H}(P)=\ol{H}(-1)\cdot(-P)$.
Set 
$$
v_1(\xi,-1):=-\big(\dfrac{1}{16\pi^3} \sin(2\pi \xi)-\dfrac{1}{8\pi^2} \cos (2\pi \xi)\big), \
v_2(\xi,1):=-( \dfrac{1}{4\pi}+\dfrac{1}{16\pi^3} ) \sin(2\pi \xi).
$$
It is straightforward to check that $(v_1(\cdot,-1), v_2(\cdot,-1),1)$ 
is a solution of (E$_{-1}$). 
Therefore $\ol{H}(-1)=1$ and  
$\ol{H}(P)=-P$ for $P \le 0$.}
We get $\ol{H}(P)=|P|$. 
\end{example}


\subsection{Representation formulas for the effective Hamiltonian}\label{subsec:rep}
In this subsection we derive representation formulas for the effective Hamiltonian 
$\ol{H}(P)$. 
See \cite{CIPP, Gomes1} for the min-max formulas for the
effective Hamiltonian for single equations.

\begin{prop}[Representation formula 1]\label{prop:rep-1}
We have 
\begin{multline}
\ol{H}(P) = \inf \{c:\ \text{there exists} \ (\phi_1,\phi_2)\in C(\T^n)^2 
\ \text{so that}\\ 
\text{the triplet } (\phi_1,\phi_2,c) 
\ \text{is a subsolution of $\rm (E_P)$} \}. \label{rep:eff-1}
\end{multline}
\end{prop}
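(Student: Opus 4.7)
\smallskip

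The plan is to prove the formula by showing the two inequalities $\ol H(P)\ge \inf S$ and $\ol H(P)\le \inf S$, where $S$ denotes the set on the right-hand side of \eqref{rep:eff-1}. The first inequality is essentially free: by Proposition \ref{prop:cell} the triplet $(v_1(\cdot,P),v_2(\cdot,P),\ol H(P))$ is a genuine solution, hence in particular a subsolution, of $({\rm E}_P)$, so $\ol H(P)\in S$ and therefore $\ol H(P)\ge \inf S$.

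The content is therefore in the reverse inequality. Given any $c$ admitting a subsolution $(\phi_1,\phi_2,c)$ of $({\rm E}_P)$, I want to show $c\ge \ol H(P)$. My approach is to compare $(\phi_1,\phi_2)$ to the approximate correctors $(v_1^\del,v_2^\del)$ from Lemma~\ref{app:lem1} via a suitable additive shift. Precisely, set $M:=\max_{i=1,2}\|\phi_i\|_{L^\infty(\T^n)}$ and consider $\psi_i:=\phi_i-\frac{c}{\del}-M$. A direct computation gives
\[
H_i(\xi,P+D\psi_i)+(1+\del)\psi_i-\psi_j = H_i(\xi,P+D\phi_i)+\phi_i-\phi_j+\del\psi_i\le c+\del\psi_i\le 0,
\]
so $(\psi_1,\psi_2)$ is a subsolution of $({\rm E}_P^\del)$. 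The comparison principle for weakly coupled monotone systems (\cite{EL,IK}), which applies thanks to the strict positivity of the row sums $\del>0$, then yields $\psi_i\le v_i^\del$ on $\T^n$ for $i=1,2$. Multiplying by $\del$ gives $\del\phi_i-c-\del M\le \del v_i^\del$, and letting $\del\to 0$ and invoking Lemma~\ref{app:lem1}(ii), which says $\del v_i^\del\to -\ol H(P)$ uniformly, produces $-c\le -\ol H(P)$, i.e.\ $c\ge \ol H(P)$. Taking the infimum over admissible $c$ completes the proof.

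The main obstacle I anticipate is simply choosing the right additive constant so that the shifted pair $(\psi_1,\psi_2)$ is actually a subsolution of the discounted system $({\rm E}_P^\del)$; once that is set up correctly the argument reduces to comparison plus the asymptotic $\del v_i^\del\to -\ol H(P)$ guaranteed by Lemma~\ref{app:lem1}(ii). No further regularity of $(\phi_1,\phi_2)$ is needed beyond continuity, since the subsolution inequality is understood in the viscosity sense and the comparison principle handles continuous sub/supersolutions directly.
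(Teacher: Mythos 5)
Your proof is correct, but it follows a genuinely different route from the paper's, and there is one subtle point about hypotheses you should be aware of.

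The easy inequality is the same in both arguments: $(v_1,v_2,\ol H(P))$ is a subsolution by Proposition~\ref{prop:cell}, so $\ol H(P)\ge c(P)$. For the reverse direction the paper argues by contradiction and never touches the discounted system: it supposes a subsolution $(\phi_1,\phi_2,c)$ with $c<\ol H(P)$, shifts the genuine corrector downward, $\ol v_i:=v_i-C$ with $C$ large so that $\ol v_i<\phi_i$, and then observes that for a sufficiently small $\ep>0$ (determined by the positive gap $\ol H(P)-c$ and the $L^\infty$-bounds) the pair $(\ol v_1,\ol v_2)$ is a supersolution and $(\phi_1,\phi_2)$ a subsolution of the same $(1+\ep)$-perturbed, strictly monotone system; comparison then forces $\ol v_i\ge\phi_i$, a contradiction. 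Your argument instead compares $(\phi_1,\phi_2)$, after an additive shift by $c/\del+M$, against the \emph{discounted} correctors $v_i^\del$ of $({\rm E}_P^\del)$ and invokes the asymptotics $\del v_i^\del\to-\ol H(P)$. The algebra you carry out is correct (with $\psi_i:=\phi_i-c/\del-M$ one gets exactly $H_i(\xi,P+D\psi_i)+(1+\del)\psi_i-\psi_j\le\del(\phi_i-M)\le 0$), and the comparison principle for the discounted system does give $\psi_i\le v_i^\del$; the limit $\del\to 0$ then yields $c\ge\ol H(P)$ as claimed. What the paper's route buys is economy of hypotheses: it needs only the existence of a cell-problem solution (Proposition~\ref{prop:cell}) and the comparison principle. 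Your route buys a direct, non-contradiction argument but goes through the discounted approximation.

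The subtle point is that Lemma~\ref{app:lem1}, which you cite for $\del v_i^\del\to-\ol H(P)$, is stated in the paper under the additional assumption (A3) (uniform Lipschitz continuity of $H_i$ in $p$), whereas Proposition~\ref{prop:rep-1} carries no such restriction. If you trace through the paper's proof of Lemma~\ref{app:lem1}(ii) for a \emph{fixed} $P$ you find that (A3) is used only to make the bounds in \eqref{CR1}--\eqref{CR3} uniform over $P$ in a ball; for a single fixed $P$, coercivity (A1) and continuity already suffice, and the qualitative convergence $\del v_i^\del\to-\ol H(P)$ holds. So your argument does yield the proposition in full generality, but as written it appears to need (A3), and a careful reader would flag this. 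You should either remark explicitly that only the fixed-$P$ version of Lemma~\ref{app:lem1}(ii) is used and that (A3) is not needed for that, or else adopt the paper's more economical shift-and-compare argument which sidesteps the discounted system altogether.
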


\begin{proof}
Fix $P\in\R^n$ and 
we denote by $c(P)$ the right-hand  side 
of \eqref{rep:eff-1}. 
By the definition of $c(P)$ 
we can easily see that $\ol{H}(P)\ge c(P)$. 
We prove the other way around. 
Assume by contradiction that
there exist a triplet $(\phi_1,\phi_2,c)\in C(\T^n)^2 \times \R$ which is 
a subsolution of $\EP$ and $c<\ol{H}(P)$. 
Let $(v_1, v_2, \ol{H}(P))$ be a solution of $\EP$ and  
take $C>0$ so that $\phi_i>v_i-C=:\ol{v}_{i}$ on $\T^n$. 
Then since $\ol{v}_i$ and $\phi_i$ are bounded on $\T^n$, 
for $\ep>0$ small enough, we have
$$
\begin{cases}
H_1(\xi,P+D\ol{v}_1)+ (1+\ep)\ol{v}_1 - \ol{v}_2 \ge 
H_1(\xi,P+D\phi_1) + (1+\ep) \phi_1 -\phi_2\\
H_2(\xi,P+D\ol{v}_2) + (1+\ep)\ol{v}_2 - \ol{v}_1
 \ge H_2(\xi,P+D\phi_2) + (1+\ep) \phi_2 - \phi_1.
\end{cases}
$$
By the comparison principle (see \cite{EL,IK}) 
we deduce $\ol{v}_i \ge \phi_i$ on $\T^n$  
which yields the contradiction.
\end{proof}

If we assume the convexity on $H_i(\xi,\cdot)$ for any $\xi\in\R^n$, 
by the classical result on the representation formula for the effective Hamiltonian 
for single Hamilton--Jacobi equations 
we can easily see that 
{
\begin{align}
\ol{H}(P)&=\inf_{\varphi \in C^{1}(\T^n)} 
\max_{\xi \in \T^n} [H_1(\xi,P+D\varphi(\xi)) + v_1(\xi,P)-v_2(\xi,P)]
\label{rep-implicit}\\
&=\inf_{\psi \in C^{1}(\T^n)} \max_{\xi \in \T^n} 
[H_2(\xi,P+D\psi(\xi))+v_2(\xi,P)-v_1(\xi,P)] 
\nonumber
\end{align}
}
for any solution $(v_1(\cdot,P),v_2(\cdot,P))$  of $\EP$, 
{which is in a sense an implicit formula.}  
For the weakly coupled system we have the following representation formula. 

\begin{prop}[Representation formula 2] \label{prop:rep-2}
If $H_i$ are convex in the $p$-variable for $i=1,2$, 
then
\begin{equation}\label{rep:eff-2}
\ol{H}(P)=\inf_{(\phi_1,\phi_2)\in C^{1}(\T^n)^2} 
\max_{i=1,2,\, \xi \in \T^n} [H_i(\xi,P+D\phi_{i}(\xi)) + \phi_i(\xi)-\phi_j(\xi)],  
\end{equation}
where we take $j\in\{1,2\}$ so that $\{i,j\}=\{1,2\}$. 
\end{prop}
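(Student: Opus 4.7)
The plan is to prove the two inequalities in \eqref{rep:eff-2} separately; denote the right-hand side by $F(P)$.

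For the easy direction $\ol{H}(P)\le F(P)$, I would pick any $(\phi_1,\phi_2)\in C^1(\T^n)^2$ and set
\[
c:=\max_{i=1,2,\,\xi\in\T^n}\bigl[H_i(\xi,P+D\phi_i(\xi))+\phi_i(\xi)-\phi_j(\xi)\bigr],
\]
with $j$ the index complementary to $i$ in $\{1,2\}$. Then $(\phi_1,\phi_2,c)$ is a classical, hence viscosity, subsolution of $\EP$, so Proposition \ref{prop:rep-1} gives $\ol{H}(P)\le c$. Taking the infimum over $(\phi_1,\phi_2)\in C^1(\T^n)^2$ yields $\ol{H}(P)\le F(P)$.

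The harder direction $F(P)\le \ol{H}(P)$ follows the classical mollification argument familiar from single convex Hamilton--Jacobi equations. Let $(v_1,v_2)\in C^{0,1}(\T^n)^2$ be a solution of $\EP$ provided by Proposition \ref{prop:cell}; since each $v_i$ is Lipschitz and $H_i(\xi,\cdot)$ is convex, $(v_1,v_2)$ satisfies the subsolution inequalities of $\EP$ pointwise at every point of differentiability, hence a.e.\ by Rademacher's theorem. Let $\rho_\eta$ be a standard mollifier on $\R^n$ and set $v_i^\eta:=v_i*\rho_\eta\in C^\infty(\T^n)$. By convexity of $H_i(\xi,\cdot)$ and Jensen's inequality, combined with the uniform continuity of $H_i$ in $\xi$ on the bounded gradient range $\{P+Dv_i(\xi)\}$, for each fixed $\xi\in\T^n$ I obtain
\[
H_i(\xi,P+Dv_i^\eta(\xi))\le \int_{\R^n} H_i(\xi-y,P+Dv_i(\xi-y))\,\rho_\eta(y)\,dy+\omega(\eta),
\]
where $\omega$ is a modulus independent of $\xi$. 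Inserting the a.e.\ subsolution property under the convolution and using linearity of the mollification then gives
\[
H_i(\xi,P+Dv_i^\eta(\xi))+v_i^\eta(\xi)-v_j^\eta(\xi)\le \ol{H}(P)+\omega(\eta)
\]
for all $\xi\in\T^n$ and $i=1,2$. Taking the maximum on the left hand side yields $F(P)\le \ol{H}(P)+\omega(\eta)$, and letting $\eta\to 0$ completes the proof.

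The only delicate point I expect is the passage from the viscosity inequality to the pointwise a.e.\ inequality for the Lipschitz correctors $v_i$; this uses convexity of $H_i$ in $p$ in an essential way and, although standard for single equations, should be verified carefully for the weakly coupled system since the coupling terms $v_i-v_j$ are merely continuous. All other ingredients---Jensen's inequality, uniform continuity of $H_i$ on compact $p$-sets from periodic continuity in $\xi$, and behavior of the convolution on $\T^n$ via the periodic extension---are routine.
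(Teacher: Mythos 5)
Your proof is correct and follows the same route as the paper: the inequality $\ol{H}(P)\le F(P)$ is obtained from Proposition \ref{prop:rep-1}, and the reverse inequality is obtained by mollifying the Lipschitz correctors $(v_1,v_2)$ and applying Jensen's inequality to the convex Hamiltonians, exactly as in the paper's auxiliary lemma. One small remark worth correcting: the passage from the viscosity subsolution property to the a.e.\ pointwise inequality does \emph{not} require convexity of $H_i$ in $p$ --- any Lipschitz viscosity subsolution satisfies the inequality at every point of differentiability (since the classical gradient lies in the superdifferential there), with Lipschitz continuity coming from coercivity and Rademacher's theorem; convexity enters only in the Jensen step for the mollified functions, and the coupling terms $v_i-v_j$, being zeroth order and themselves Lipschitz, cause no additional difficulty.
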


\begin{lem}
{Assume that $H_i$ are convex in the $p$-variable.} 
Let $(v_1,v_2,\ol{H}(P))\in C(\T^n)^2$ be a subsolution of 
$\EP$. For $\del>0$,
set $v_{i\del}(x):=\rho_{\del}\ast v_{i}(x)$, 
where 
$\rho_\del(x):=\del^{-n}\rho(x/\del)$ with 
$\rho\in C^{\infty}(\R^n)$ be a standard mollification kernel, 
i.e., $\rho\ge0$, $\supp \rho\subset B(0,1)$, 
and $\int_{\R^n}\rho(x)\,dx=1$.
Then $(v_{1\del},v_{2\del},\ol{H}(P)+\om(\del))$ 
is a subsolution of $\EP$ for some modulus $\om$. 
\end{lem}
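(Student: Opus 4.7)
The plan is to combine three standard ingredients. First, assumption (A1) gives the Lipschitz regularity of any subsolution, so Rademacher's theorem upgrades the viscosity subsolution inequalities to a.e. classical inequalities. Second, convexity of $H_i(\xi,\cdot)$ together with the linearity of convolution lets us apply Jensen's inequality to move a mollification inside the Hamiltonian. Third, the uniform continuity of $H_i$ in the $\xi$-variable on compact sets of $p$ produces the modulus $\om$ that absorbs the small translation introduced by the mollifier.

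More concretely, I would first note that since the Hamiltonians $H_i$ are coercive and the coupling terms $v_1-v_2$ are bounded, each subsolution $v_i$ is Lipschitz on $\T^n$, and the norms of $Dv_i$ are bounded by a constant $R=R(P)$ independent of the particular subsolution (this uses only (A1)). By Rademacher's theorem, $v_i$ is differentiable a.e., and at every point of differentiability the viscosity subsolution inequality
\[
H_i(\xi, P + Dv_i(\xi)) + v_i(\xi) - v_j(\xi) \le \ol{H}(P)
\]
holds in the classical pointwise sense. Differentiating under the integral sign, $Dv_{i\del}(\xi) = \int_{\R^n}\rho_\del(y)\,Dv_i(\xi-y)\,dy$, so $|P+Dv_i(\xi-y)| \le |P|+R$ on $\supp\rho_\del$.

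Next, since $\rho_\del\ge 0$ and $\int\rho_\del=1$, and $H_i(\xi,\cdot)$ is convex, Jensen's inequality gives
\[
H_i(\xi, P + Dv_{i\del}(\xi)) \le \int_{\R^n}\rho_\del(y)\,H_i(\xi, P + Dv_i(\xi-y))\,dy.
\]
Because $H_i$ is continuous and $\T^n\times\ol{B}(0,|P|+R)$ is compact, there is a modulus $\om_H$ with $|H_i(\xi,q)-H_i(\xi-y,q)|\le \om_H(|y|)$ for all such $q$. Using this on $\supp\rho_\del\subset\ol{B}(0,\del)$ and then the a.e. subsolution inequality from the previous step yields
\[
H_i(\xi, P+Dv_{i\del}(\xi)) \le \int_{\R^n}\rho_\del(y)\bigl[\ol{H}(P) - v_i(\xi-y) + v_j(\xi-y)\bigr]\,dy + \om_H(\del),
\]
and since convolution commutes with the linear terms, the right-hand side equals $\ol{H}(P) - v_{i\del}(\xi) + v_{j\del}(\xi) + \om_H(\del)$. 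Setting $\om(\del):=\om_H(\del)$ and noting that $v_{i\del}$ is smooth, this is a classical (hence viscosity) subsolution inequality of $\EP$ with the shifted constant $\ol{H}(P)+\om(\del)$.

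The only step that needs some care is the a.e. reduction in the first paragraph. I expect the main technical obstacle to be verifying the uniform Lipschitz bound on subsolutions of the \emph{system}, since the standard scalar coercivity argument must be applied while treating the bounded coupling $v_i-v_j$ as a source term. This can be done either by a direct doubling-variables argument on $\T^n$ using (A1), or by observing that any Lipschitz viscosity subsolution of the scalar inequality $H_i(\xi,P+Dv_i)\le \ol{H}(P)+\|v_1-v_2\|_{L^\infty(\T^n)}$ inherits a gradient bound depending only on $P$ and this $L^\infty$ norm.
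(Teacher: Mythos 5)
Your proposal is correct and follows essentially the same route as the paper's proof: coercivity gives a Lipschitz a.e. subsolution, the subsolution inequality is mollified, Jensen's inequality (using convexity of $H_i$ in $p$) moves the convolution inside $H_i$, and uniform continuity of $H_i$ in $\xi$ on $\T^n\times\ol{B}(0,|P|+R)$ produces the modulus $\om(\del)$. The only difference is cosmetic — the paper applies the uniform-continuity estimate before Jensen, you apply Jensen first — and you are, if anything, slightly more precise in stating that the a.e. consequence of Rademacher is the subsolution \emph{inequality} rather than an equality.
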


{
\begin{proof}
Note that in view of the coercivity of $H_i$,
$v_i$ are Lipschitz continuous and
$(v_1,v_2,\ol{H}(P))$ solves $\EP$
almost everywhere.  
Fix any $\xi\in \T^n$. 
We calculate that 
\begin{eqnarray*}
\ol{H}(P)&\ge& 
\rho_{\del}\ast 
\big(H_1(\cdot,Dv_1(\cdot))+(v_1-v_2)\big)(\xi)\\
{}&=& 
\int_{B(\xi,\del)}\rho_{\del}(\xi-\eta)
\big(H_1(\eta,Dv_1(\eta))+(v_1-v_2)(\eta)\big)\,d\eta\\
{}&\ge& 
\int_{B(\xi,\del)}\rho_{\del}(\xi-\eta)
\big (H_1(\xi,Dv_1(\eta))-\om(\del) \big )\,d\eta +(v_{1\del}-v_{2\del})(\xi)\\
{}&\ge& 
H_1(\xi,\rho_{\del}\ast Dv_1(\xi))+(v_{1\del}-v_{2\del})(\xi)-\om(\del)\\
{}&=&
H_{1}(\xi,Dv_{1\del}(\xi))+(v_{1\del}-v_{2\del})(\xi)-\om(\del),
\end{eqnarray*}
where the third inequality follows by using Jensen's inequality. 
Here $\om$ is a modulus of continuity.
\end{proof}
}

\begin{proof}[Proof of Proposition {\rm \ref{prop:rep-2}}]
{
Let $c(P)$ be the constant on the right-hand  side of \eqref{rep:eff-2}. 
Noting that for any $(\phi_{1},\phi_2)\in C^{1}(\T^n)^2$ 
\[
H_{i}(\xi,P+D\phi_{i}(\xi))+(\phi_{i}-\phi_{j})(\xi)
\le \max_{i=1,2,\, \xi\in\T^n}[H_{i}(\xi,P+D\phi_i(\xi))
+(\phi_{i}-\phi_{j})(\xi)]=:a_{\phi_1,\phi_2}\]
for every $\xi \in \T^n$.
By Proposition \ref{prop:rep-1} we see that 
$\ol{H}(P)\le a_{\phi_1,\phi_2}$ 
for all $(\phi_{1},\phi_2)\in C^{1}(\T^n)^2$. 
Therefore we get $\ol{H}(P)\le c(P)$.  
}

{
Conversely, 
we observe that by Proposition \ref{prop:cell}  
$(v_{1\del}(\cdot,P), v_{2\del}(\cdot,P), \ol{H}(P)+\om(\del))\in C^{1}(\T^n)^2\times\R$  
is a subsolution of $\EP$. 
Therefore, by the definition of $c(P)$ we see that 
 $c(P)\le \ol{H}(P)+\om(\del)$. 
Sending $\del\to0$ yields the conclusion.  
}
\end{proof}

{
If $H_{i}$ are convex in the $p$-variable, then there is a variational 
formula for solutions of the initial value problem and the cell problem 
as stated in Introduction. 
Therefore, naturally 
we have the following variational formula  
\begin{align*}
\ol{H}(P)=&\, 
-\lim_{\del\to0}
\inf_{\eta} 
\E_{i}\Bigl[
\int_{0}^{+\infty}e^{-\del s} 
\bigl(-P\cdot \dot{\eta}(s)+L_{\nu(s)}(\eta(s),-\dot{\eta}(s))\bigr)
\,ds
\Bigr]\\
=&\, 
-\lim_{t\to\infty}
\frac{1}{t} 
\inf_{\eta}
\E_{i}\Bigl[
\int_{0}^{t}
\bigl(-P\cdot\dot{\eta}(s)+L_{\nu(s)}(\eta(s),-\dot{\eta}(s))\bigr)
\,ds
\Bigr], 
\end{align*}
where the infimum is taken over $\eta\in\AC([0,+\infty),\R^n)$ such that 
$\eta(0)=x$ and 
$\E_{i}$ denotes the expectation of 
a process with $\nu(0)=i$ given by \eqref{markov}. 
}

\begin{rem} 
When we consider the nonconvex Hamilton--Jacobi equations, 
in general we cannot expect the formula \eqref{rep:eff-2}. 
Take the Hamiltonian 
\begin{equation}\label{example}
H_{i}(\xi,p):=(|p|^2-1)^2 \ \text{for} \ i=1,2 
\end{equation}
for instance. 
In this example if we calculate the right-hand  side of \eqref{rep:eff-2} 
with $P=0$, then it is $0$. 
But we can easily check that $\ol{H}(0)=1$, 
since in this case we can choose $v_1(\cdot,0)=v_2(\cdot,0)\equiv 0$ to be a solution of (E$_0$).

The following formula is a revised min-max formula 
for the effective Hamiltonian for nonconvex Hamilton--Jacobi equations. 
{
\begin{prop}
We have 
\begin{equation}\label{rep:eff-3}
\ol{H}(P)=
\inf_{(\phi_1,\phi_2)\in C^{0,1}(\T^n)^2}\max_{i=1,2,\,\xi \in\T^n} 
\sup_{p\in D^{+}\phi_i(\xi)}
[H_{i}(\xi,P+p)+(\phi_i-\phi_j)(\xi)], 
\end{equation}
where 
if $D^{+}\phi_i(\xi)=\emptyset$,
 then we set 
$\sup_{p\in D^{+}\phi_i(\xi)}[H_{i}(\xi,P+p)+(\phi_i-\phi_j)(\xi)]=-\infty$ by convention.  
\end{prop}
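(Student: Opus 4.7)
The plan is to establish both inequalities in \eqref{rep:eff-3} separately; denote the right-hand side by $c(P)$.

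For the upper bound $\ol{H}(P) \le c(P)$, I would argue as follows. Fix any competitor pair $(\phi_1,\phi_2)\in C^{0,1}(\T^n)^2$ and set
\[
a_{\phi_1,\phi_2}:=\max_{i=1,2,\,\xi \in\T^n}\sup_{p\in D^{+}\phi_i(\xi)}[H_{i}(\xi,P+p)+(\phi_i-\phi_j)(\xi)].
\]
The crucial observation is that this pointwise superdifferential inequality is precisely the characterization, valid for Lipschitz functions, of the triplet $(\phi_1,\phi_2,a_{\phi_1,\phi_2})$ being a viscosity subsolution of $({\rm E}_P)$: if $\psi\in C^1$ and $\phi_i-\psi$ attains a maximum at $\xi$, then $D\psi(\xi)\in D^+\phi_i(\xi)$, so the required subsolution inequality follows. (The convention $\sup_{p\in\emptyset}=-\infty$ is consistent with the fact that points where $D^+\phi_i(\xi)=\emptyset$ impose no constraint on a subsolution.) Applying Proposition \ref{prop:rep-1} gives $\ol{H}(P)\le a_{\phi_1,\phi_2}$, and taking the infimum over $(\phi_1,\phi_2)$ yields $\ol{H}(P)\le c(P)$.

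For the reverse inequality $c(P)\le \ol{H}(P)$, I would use the actual cell-problem solution as a competitor. Proposition \ref{prop:cell} provides $(v_1(\cdot,P),v_2(\cdot,P),\ol{H}(P))$ solving $({\rm E}_P)$, and the coercivity assumption (A1) forces $v_i(\cdot,P)\in C^{0,1}(\T^n)$ (this is a standard consequence of coercivity together with periodicity). In particular $(v_1,v_2,\ol{H}(P))$ is a subsolution, hence by the same Lipschitz/superdifferential characterization recalled above,
\[
H_i(\xi,P+p)+(v_i-v_j)(\xi)\le \ol{H}(P)\qquad \text{for every }\xi\in\T^n,\ p\in D^+v_i(\xi),\ i=1,2.
\]
Taking the max over $i,\xi$ and the sup over $p\in D^+v_i(\xi)$ shows that $(v_1,v_2)$ realizes a value $\le \ol{H}(P)$ in the infimum defining $c(P)$, so $c(P)\le \ol{H}(P)$.

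The only technical point that requires care is the Lipschitz version of the superdifferential characterization of viscosity subsolutions for the weakly coupled system; this is standard (it reduces componentwise to the scalar case because the zero-order coupling term $(\phi_i-\phi_j)$ involves no derivatives), but it is exactly what makes the formula \eqref{rep:eff-3} genuinely broader than \eqref{rep:eff-2}. Indeed, for the example \eqref{example} with $P=0$, one may take $\phi_1(\xi)=\phi_2(\xi)=-|\sin(\pi\xi)|$ or any Lipschitz function whose superdifferential contains only vectors with $|p|=1$, so that $H_i(\xi,p)=0$ on $D^+\phi_i(\xi)$, recovering $\ol{H}(0)=\max(\dots)$ correctly; this also shows why restricting the infimum to $C^1$ competitors as in \eqref{rep:eff-2} fails in the nonconvex regime and why the superdifferential version is the right substitute.
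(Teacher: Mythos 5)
Your proof of the proposition itself is correct and follows essentially the same route as the paper: for $\ol H(P)\le c(P)$ you observe that the pointwise superdifferential bound $a_{\phi_1,\phi_2}$ makes $(\phi_1,\phi_2,a_{\phi_1,\phi_2})$ a viscosity subsolution of $({\rm E}_P)$ and invoke Proposition \ref{prop:rep-1}; for the converse you use the Lipschitz cell-problem solution $(v_1,v_2,\ol H(P))$ as competitor and the subsolution/superdifferential equivalence for Lipschitz functions. That is precisely the argument the paper gives.

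One remark on your closing illustration: the claim that for example \eqref{example} with $P=0$ one can take a Lipschitz $\phi$ with $D^+\phi(\xi)\subset\{|p|=1\}$ so that $H_i\equiv 0$ on the superdifferential is false. Any nonconstant periodic Lipschitz function on $\T$ has a local maximum $\xi_0$, and at such a point $0\in D^+\phi(\xi_0)$, so $H_i(\xi_0,0)=(0-1)^2=1$ is unavoidable in the sup. What actually happens is that the constant competitor $\phi_1=\phi_2\equiv 0$ already realizes the value $1$ (since $D^+0=\{0\}$), and no competitor can do better by the local-maximum observation; this is why the right-hand side of \eqref{rep:eff-3} equals $1=\ol H(0)$, whereas the $C^1$ formula \eqref{rep:eff-2} would incorrectly give $0$. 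The slip does not affect the correctness of your proof, but the example as stated should be corrected.
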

}
We notice that 
if $H_i$ are given by \eqref{example},
then the right-hand  side of \eqref{rep:eff-3} 
{with $P=0$} is $1$. 

\begin{proof}
The proof is already in the proof of Proposition \ref{prop:rep-2}. 
We just need to be careful for 
the definition of viscosity subsolutions. 
Indeed, let $c$ be the right-hand  side of \eqref{rep:eff-3} 
and noting that for any 
$(\phi_1,\phi_2)\in C^{0,1}(\T^n)^2$,
$\xi \in \T^n$, and $q \in D^{+} \phi_i(\xi)$,
\begin{align*}
&H_{i}(\xi,P+q)+(\phi_i-\phi_j)(\xi)\le
\max_{\xi \in\T^n,i=1,2}\sup_{p\in D^{+}\phi_i(\xi)}
[H_i(\xi,P+p)-(\phi_i-\phi_j)(\xi)]=:a_{\phi_1,\phi_2}.
\end{align*}
Thus, $\ol{H}(P)\le a_{\phi_1,\phi_2}$ 
for all $(\phi_1,\phi_2)\in C^{0,1}(\T^n)^2$ 
by Proposition \ref{prop:rep-1}. 
Therefore, $\ol{H}(P)\le c$.

Conversely, 
there exists a viscosity subsolution $(v_1(\cdot,P), v_2(\cdot,P), \ol{H}(P))\in C^{0,1}(\T^n)^2\times\R$  
of $\EP$.
By the definition of viscosity subsolutions we have 
\[
H_{i}(\xi,P+p)+(v_i-v_j)(\xi) \le \ol{H}(P) \ 
\textrm{for all} \ \xi \in\T^n \ 
\textrm{and} \ p\in D^{+}v_i(\xi).  
\]
Thus, 
\[
\max_{\xi\in\T^n,i=1,2}
\sup_{p\in D^{+}v_{i}(\xi)}
[H_i(\xi,P+p)+(v_i-v_j)(\xi)] \le \ol{H}(P),  
\]
which implies $c\le \ol{H}(P)$. 
\end{proof}

\end{rem}


\subsection{Flat parts and Non-flat parts near the origin}\label{subsec:flat}

{
In this subsection, we study the results concerning flat parts and non-flat parts
of the effective Hamiltonian $\ol{H}$ near the origin.
We first point out that there are some cases in which we can obtain similar results
to those of Concordel's results for single equations. We 
present different techniques to obtain these results
, namely the min-max formulas, and the construction
of subsolutions.
In this subsection, we {only deal with the Hamiltonians of the form} 
$H_i(\xi,p)=|p|^2 - V_i(\xi)$, where 
$V_i \in C(\T^n)$ for $i=1,2$ unless otherwise stated.
}

\begin{thm}\label{thm:main4}
Assume that  $V_i \ge 0$ in $\T^n$ and $\{V_i=0\}=:U_i \subset \T^n$ 
for  $i=1,2$.
We assume further that $U_1 \cap U_2 \ne \emptyset$
and there exist open sets $W_1, W_2$ in $\T^n$,
and a vector $q\in \R^n$ such that $\Pi(q+W_2) \Subset (0,1)^n$ and
\begin{equation}\label{flat.con}
U_1 \cup U_2 \subset W_1 \subset W_2 \
\text{and } {\rm dist}(W_1, \partial W_2),\  {\rm dist}(U_1 \cup U_2, \partial W_1)>0, 
\end{equation}
then there exists $\gam>0$ such that
$\ol{H}(P)=0$ for $|P| \le \gam$.
\end{thm}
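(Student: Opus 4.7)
The plan is to prove a two-sided bound, $\ol{H}(P)\le 0$ and $\ol{H}(P)\ge 0$, for all $|P|$ sufficiently small; the upper bound encodes the flatness and is obtained by a Concordel-type subsolution construction, while the lower bound is free from a min-max argument since $U_1 \cap U_2 \neq \emptyset$.

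For the upper bound, the key observation is that one may collapse the system by taking $v_1 = v_2 = v$: both coupling terms vanish and $(v,v,0)$ is a subsolution of $\EP$ precisely when
\[
|P + Dv(\xi)|^2 \le \min\{V_1(\xi), V_2(\xi)\} =: V(\xi) \qquad \text{on } \T^n,
\]
whose zero set is exactly $\{V=0\} = U_1 \cup U_2$. I build such a $v$ directly. Using the translation vector $q$, I may assume $W_2 \Subset (0,1)^n$. Choose a smooth cutoff $\phi:\T^n \to [0,1]$ with $\phi\equiv 1$ on $W_1$ and $\supp \phi \subset W_2$, fix $\xi_0 \in W_1$, and set
\[
v(\xi) := -\phi(\xi)\, P\cdot(\xi - \xi_0)
\]
on the fundamental cube, extended by periodicity (permissible because $\phi$ vanishes near $\partial [0,1]^n$). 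A direct computation shows $Dv = -P$ on $W_1$, so $|P+Dv|^2 = 0 \le V$; $Dv = 0$ on $\T^n \setminus W_2$, so $|P+Dv|^2 = |P|^2$; and in the transition layer $W_2 \setminus W_1$,
\[
|P + Dv(\xi)| \le \bigl(1 + \|D\phi\|_{L^\infty}\,\diam W_2\bigr)|P| =: M|P|.
\]
The positive distance between $U_1\cup U_2$ and $\partial W_1$ ensures $V \ge \delta > 0$ on the compact set $\T^n\setminus W_1$, so setting $\gamma := \sqrt{\delta}/M$ yields $|P+Dv|^2 \le V$ almost everywhere on $\T^n$ whenever $|P|\le \gamma$. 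Since $H_i$ is coercive and $v$ is Lipschitz, $v$ is a viscosity subsolution of the single inequality, hence $(v,v,0)$ is a subsolution of $\EP$, and Proposition \ref{prop:rep-1} gives $\ol{H}(P)\le 0$.

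For the lower bound, I use the min-max formula of Proposition \ref{prop:rep-2}, which applies since each $H_i(\xi,p)=|p|^2 - V_i(\xi)$ is convex in $p$. Pick $\xi^* \in U_1\cap U_2$, so $V_1(\xi^*)=V_2(\xi^*)=0$. For any $(\phi_1,\phi_2) \in C^1(\T^n)^2$, bounding the maximum over $(i,\xi)$ by the value at $\xi^*$ and then by the average of the two bracket terms gives
\[
\max_{i=1,2,\,\xi\in\T^n}\bigl[H_i(\xi,P+D\phi_i(\xi)) + \phi_i(\xi)-\phi_j(\xi)\bigr]
\ge \tfrac{1}{2}\bigl(|P+D\phi_1(\xi^*)|^2 + |P+D\phi_2(\xi^*)|^2\bigr) \ge 0.
\]
Taking the infimum over $(\phi_1,\phi_2)$ yields $\ol{H}(P)\ge 0$, and combining with the previous step finishes the proof.

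The main obstacle is the subsolution construction for the upper bound: the hypothesis $\Pi(q+W_2)\Subset(0,1)^n$ is essential for making the tilted ansatz $-\phi(\xi)P\cdot(\xi-\xi_0)$ a well-defined $\T^n$-periodic function, and the two positive separation conditions in \eqref{flat.con} are used both to control $|P+Dv|$ uniformly in the transition layer $W_2\setminus W_1$ and to secure a positive lower bound for $V$ off $W_1$. Once the diagonal reduction $v_1=v_2=v$ is recognized, everything else is either a direct calculation or an appeal to the representation formulas already developed in Section \ref{subsec:rep}.
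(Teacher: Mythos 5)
Your proof is correct and follows essentially the same strategy as the paper: the lower bound $\ol{H}(P)\ge 0$ via Proposition \ref{prop:rep-2} evaluated at a common zero of $V_1,V_2$ (the paper observes $\max_i[\varphi_i-\varphi_j]\ge 0$ directly rather than averaging, but this is cosmetic), and the upper bound via the diagonal subsolution $(\varphi,\varphi,0)$ with $\varphi$ a cutoff of $-P\cdot\xi$ supported in $W_2$. Your construction $v(\xi)=-\phi(\xi)P\cdot(\xi-\xi_0)$ is simply a more explicit realization of the function the paper posits abstractly in \eqref{sub-phi}.
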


\begin{center}
\unitlength 0.1in
\begin{picture}( 52.5700, 22.7000)(  4.0000,-22.7000)
%
{\color[named]{Black}{%
\special{pn 8}%
\special{pa 3218 1810}%
\special{pa 5588 1110}%
\special{dt 0.045}%
}}%
\put(43.0700,-17.9000){\makebox(0,0)[lb]{{\small $W_1$}}}%
%
{\color[named]{Black}{%
\special{pn 8}%
\special{pa 3228 1810}%
\special{pa 5618 1810}%
\special{fp}%
\special{sh 1}%
\special{pa 5618 1810}%
\special{pa 5550 1790}%
\special{pa 5564 1810}%
\special{pa 5550 1830}%
\special{pa 5618 1810}%
\special{fp}%
}}%
%
{\color[named]{Black}{%
\special{pn 13}%
\special{pa 3898 1610}%
\special{pa 4898 1310}%
\special{fp}%
}}%
%
{\color[named]{Black}{%
\special{pn 13}%
\special{pa 3698 1810}%
\special{pa 3726 1796}%
\special{pa 3754 1780}%
\special{pa 3778 1760}%
\special{pa 3800 1734}%
\special{pa 3814 1706}%
\special{pa 3826 1680}%
\special{pa 3846 1652}%
\special{pa 3882 1620}%
\special{pa 3898 1610}%
\special{pa 3888 1620}%
\special{sp}%
}}%
%
{\color[named]{Black}{%
\special{pn 13}%
\special{pa 4898 1310}%
\special{pa 4924 1330}%
\special{pa 4946 1350}%
\special{pa 4964 1374}%
\special{pa 4976 1402}%
\special{pa 4984 1432}%
\special{pa 4990 1466}%
\special{pa 4994 1500}%
\special{pa 4994 1534}%
\special{pa 4996 1568}%
\special{pa 4998 1604}%
\special{pa 5000 1638}%
\special{pa 5004 1670}%
\special{pa 5012 1702}%
\special{pa 5022 1732}%
\special{pa 5038 1758}%
\special{pa 5060 1784}%
\special{pa 5086 1804}%
\special{pa 5118 1812}%
\special{pa 5138 1810}%
\special{sp}%
}}%
%
{\color[named]{Black}{%
\special{pn 8}%
\special{pa 3228 1800}%
\special{pa 3228 360}%
\special{fp}%
\special{sh 1}%
\special{pa 3228 360}%
\special{pa 3208 428}%
\special{pa 3228 414}%
\special{pa 3248 428}%
\special{pa 3228 360}%
\special{fp}%
}}%
%
{\color[named]{Black}{%
\special{pn 8}%
\special{pa 3878 1610}%
\special{pa 3878 1810}%
\special{dt 0.045}%
}}%
%
{\color[named]{Black}{%
\special{pn 8}%
\special{pa 4898 1330}%
\special{pa 4888 1810}%
\special{dt 0.045}%
}}%
%
{\color[named]{Black}{%
\special{pn 13}%
\special{pa 5138 1810}%
\special{pa 5568 1800}%
\special{fp}%
}}%
%
{\color[named]{Black}{%
\special{pn 13}%
\special{pa 3228 1810}%
\special{pa 3698 1800}%
\special{fp}%
}}%
\put(33.2000,-23.9000){\makebox(0,0)[lb]{Fig. 4.2. The graph of $\varphi$ in case $n=1$.}}%
\put(56.5700,-17.7000){\makebox(0,0)[lb]{{\small$\xi$}}}%
\put(53.5700,-10.9000){\makebox(0,0)[lb]{$-P \cdot \xi$}}%
\put(40.5700,-19.5000){\makebox(0,0)[lb]{{\small $W_2$}}}%
%
{\color[named]{Black}{%
\special{pn 8}%
\special{pa 400 1800}%
\special{pa 2600 1800}%
\special{fp}%
\special{sh 1}%
\special{pa 2600 1800}%
\special{pa 2534 1780}%
\special{pa 2548 1800}%
\special{pa 2534 1820}%
\special{pa 2600 1800}%
\special{fp}%
}}%
%
{\color[named]{Black}{%
\special{pn 8}%
\special{pa 600 2000}%
\special{pa 600 0}%
\special{fp}%
\special{sh 1}%
\special{pa 600 0}%
\special{pa 580 68}%
\special{pa 600 54}%
\special{pa 620 68}%
\special{pa 600 0}%
\special{fp}%
}}%
%
{\color[named]{Black}{%
\special{pn 8}%
\special{pa 2400 2000}%
\special{pa 2400 0}%
\special{dt 0.045}%
}}%
%
{\color[named]{Black}{%
\special{pn 8}%
\special{pa 400 200}%
\special{pa 2600 200}%
\special{dt 0.045}%
}}%
\put(5.2000,-24.0000){\makebox(0,0)[lb]{Fig. 4.1. The figure of $U_i, W_i$.}}%
%
{\color[named]{Black}{%
\special{pn 8}%
\special{pa 1532 802}%
\special{pa 1550 830}%
\special{pa 1566 858}%
\special{pa 1580 886}%
\special{pa 1594 914}%
\special{pa 1608 944}%
\special{pa 1618 974}%
\special{pa 1628 1004}%
\special{pa 1642 1066}%
\special{pa 1646 1098}%
\special{pa 1648 1130}%
\special{pa 1648 1162}%
\special{pa 1644 1194}%
\special{pa 1634 1224}%
\special{pa 1620 1254}%
\special{pa 1602 1280}%
\special{pa 1576 1300}%
\special{pa 1548 1312}%
\special{pa 1516 1316}%
\special{pa 1484 1316}%
\special{pa 1452 1310}%
\special{pa 1422 1300}%
\special{pa 1392 1286}%
\special{pa 1364 1272}%
\special{pa 1336 1254}%
\special{pa 1312 1236}%
\special{pa 1286 1216}%
\special{pa 1262 1194}%
\special{pa 1240 1172}%
\special{pa 1218 1148}%
\special{pa 1178 1098}%
\special{pa 1160 1072}%
\special{pa 1142 1046}%
\special{pa 1126 1018}%
\special{pa 1110 990}%
\special{pa 1094 962}%
\special{pa 1068 904}%
\special{pa 1058 874}%
\special{pa 1050 842}%
\special{pa 1042 812}%
\special{pa 1036 780}%
\special{pa 1032 748}%
\special{pa 1032 716}%
\special{pa 1034 684}%
\special{pa 1038 652}%
\special{pa 1048 622}%
\special{pa 1064 594}%
\special{pa 1086 570}%
\special{pa 1112 552}%
\special{pa 1142 542}%
\special{pa 1174 540}%
\special{pa 1206 542}%
\special{pa 1238 550}%
\special{pa 1268 560}%
\special{pa 1296 574}%
\special{pa 1324 590}%
\special{pa 1352 608}%
\special{pa 1376 628}%
\special{pa 1402 648}%
\special{pa 1424 670}%
\special{pa 1448 692}%
\special{pa 1468 718}%
\special{pa 1488 742}%
\special{pa 1508 768}%
\special{pa 1526 794}%
\special{pa 1532 802}%
\special{sp}%
}}%
%
{\color[named]{Black}{%
\special{pn 8}%
\special{pa 1982 1032}%
\special{pa 1990 1064}%
\special{pa 1988 1096}%
\special{pa 1976 1126}%
\special{pa 1962 1154}%
\special{pa 1944 1182}%
\special{pa 1924 1206}%
\special{pa 1902 1228}%
\special{pa 1878 1250}%
\special{pa 1854 1272}%
\special{pa 1830 1292}%
\special{pa 1802 1310}%
\special{pa 1776 1328}%
\special{pa 1748 1344}%
\special{pa 1722 1360}%
\special{pa 1692 1376}%
\special{pa 1664 1388}%
\special{pa 1634 1402}%
\special{pa 1606 1414}%
\special{pa 1576 1424}%
\special{pa 1544 1436}%
\special{pa 1514 1444}%
\special{pa 1484 1452}%
\special{pa 1452 1460}%
\special{pa 1420 1466}%
\special{pa 1388 1470}%
\special{pa 1356 1474}%
\special{pa 1324 1474}%
\special{pa 1292 1474}%
\special{pa 1262 1472}%
\special{pa 1230 1466}%
\special{pa 1200 1454}%
\special{pa 1172 1438}%
\special{pa 1148 1416}%
\special{pa 1134 1388}%
\special{pa 1132 1356}%
\special{pa 1138 1324}%
\special{pa 1150 1294}%
\special{pa 1164 1266}%
\special{pa 1184 1240}%
\special{pa 1206 1216}%
\special{pa 1228 1194}%
\special{pa 1252 1172}%
\special{pa 1276 1152}%
\special{pa 1302 1132}%
\special{pa 1382 1082}%
\special{pa 1410 1066}%
\special{pa 1438 1050}%
\special{pa 1468 1038}%
\special{pa 1528 1014}%
\special{pa 1558 1002}%
\special{pa 1588 992}%
\special{pa 1618 984}%
\special{pa 1680 970}%
\special{pa 1712 964}%
\special{pa 1744 960}%
\special{pa 1776 958}%
\special{pa 1808 956}%
\special{pa 1840 958}%
\special{pa 1872 962}%
\special{pa 1902 970}%
\special{pa 1932 982}%
\special{pa 1958 1000}%
\special{pa 1978 1026}%
\special{pa 1982 1032}%
\special{sp}%
}}%
%
{\color[named]{Black}{%
\special{pn 8}%
\special{pa 1982 1032}%
\special{pa 1982 1032}%
\special{sp -0.045}%
}}%
%
{\color[named]{Black}{%
\special{pn 8}%
\special{pa 1100 480}%
\special{pa 1168 472}%
\special{pa 1202 468}%
\special{pa 1236 466}%
\special{pa 1268 462}%
\special{pa 1302 462}%
\special{pa 1334 460}%
\special{pa 1366 460}%
\special{pa 1396 462}%
\special{pa 1428 466}%
\special{pa 1458 470}%
\special{pa 1486 476}%
\special{pa 1514 484}%
\special{pa 1542 496}%
\special{pa 1568 508}%
\special{pa 1594 522}%
\special{pa 1620 538}%
\special{pa 1644 554}%
\special{pa 1668 574}%
\special{pa 1692 594}%
\special{pa 1714 616}%
\special{pa 1736 638}%
\special{pa 1760 662}%
\special{pa 1782 688}%
\special{pa 1802 714}%
\special{pa 1824 740}%
\special{pa 1868 796}%
\special{pa 1890 826}%
\special{pa 1912 856}%
\special{pa 1934 884}%
\special{pa 1956 916}%
\special{pa 1978 946}%
\special{pa 2000 976}%
\special{pa 2042 1036}%
\special{pa 2060 1068}%
\special{pa 2078 1096}%
\special{pa 2092 1126}%
\special{pa 2106 1154}%
\special{pa 2116 1182}%
\special{pa 2124 1210}%
\special{pa 2130 1236}%
\special{pa 2132 1260}%
\special{pa 2130 1284}%
\special{pa 2124 1308}%
\special{pa 2116 1328}%
\special{pa 2102 1348}%
\special{pa 2086 1368}%
\special{pa 2068 1386}%
\special{pa 2046 1402}%
\special{pa 2022 1418}%
\special{pa 1994 1434}%
\special{pa 1966 1448}%
\special{pa 1934 1462}%
\special{pa 1902 1474}%
\special{pa 1868 1486}%
\special{pa 1832 1498}%
\special{pa 1794 1508}%
\special{pa 1756 1520}%
\special{pa 1718 1530}%
\special{pa 1598 1556}%
\special{pa 1556 1566}%
\special{pa 1476 1582}%
\special{pa 1438 1590}%
\special{pa 1398 1596}%
\special{pa 1360 1604}%
\special{pa 1322 1608}%
\special{pa 1286 1614}%
\special{pa 1252 1618}%
\special{pa 1218 1620}%
\special{pa 1184 1620}%
\special{pa 1154 1620}%
\special{pa 1124 1618}%
\special{pa 1096 1614}%
\special{pa 1070 1606}%
\special{pa 1046 1598}%
\special{pa 1024 1588}%
\special{pa 1004 1574}%
\special{pa 988 1558}%
\special{pa 972 1538}%
\special{pa 960 1518}%
\special{pa 948 1494}%
\special{pa 938 1470}%
\special{pa 932 1442}%
\special{pa 924 1412}%
\special{pa 920 1382}%
\special{pa 916 1350}%
\special{pa 914 1316}%
\special{pa 912 1280}%
\special{pa 910 1244}%
\special{pa 908 1208}%
\special{pa 908 1170}%
\special{pa 906 1134}%
\special{pa 906 1094}%
\special{pa 904 1056}%
\special{pa 904 1018}%
\special{pa 902 980}%
\special{pa 898 940}%
\special{pa 896 904}%
\special{pa 892 866}%
\special{pa 888 830}%
\special{pa 886 794}%
\special{pa 886 760}%
\special{pa 886 726}%
\special{pa 888 694}%
\special{pa 894 666}%
\special{pa 902 638}%
\special{pa 912 612}%
\special{pa 926 588}%
\special{pa 944 566}%
\special{pa 968 548}%
\special{pa 994 530}%
\special{pa 1022 516}%
\special{pa 1052 500}%
\special{pa 1084 488}%
\special{pa 1100 480}%
\special{sp -0.045}%
}}%
%
{\color[named]{Black}{%
\special{pn 8}%
\special{pa 1000 400}%
\special{pa 1094 374}%
\special{pa 1126 366}%
\special{pa 1156 360}%
\special{pa 1188 354}%
\special{pa 1218 350}%
\special{pa 1250 346}%
\special{pa 1280 346}%
\special{pa 1312 346}%
\special{pa 1342 348}%
\special{pa 1374 352}%
\special{pa 1404 356}%
\special{pa 1466 370}%
\special{pa 1496 380}%
\special{pa 1526 390}%
\special{pa 1556 402}%
\special{pa 1586 416}%
\special{pa 1618 430}%
\special{pa 1648 444}%
\special{pa 1678 462}%
\special{pa 1706 480}%
\special{pa 1736 498}%
\special{pa 1766 518}%
\special{pa 1796 540}%
\special{pa 1824 562}%
\special{pa 1854 584}%
\special{pa 1882 608}%
\special{pa 1912 634}%
\special{pa 1938 660}%
\special{pa 1966 686}%
\special{pa 1992 714}%
\special{pa 2018 740}%
\special{pa 2044 770}%
\special{pa 2068 798}%
\special{pa 2090 828}%
\special{pa 2112 858}%
\special{pa 2132 888}%
\special{pa 2152 918}%
\special{pa 2168 948}%
\special{pa 2184 980}%
\special{pa 2200 1010}%
\special{pa 2212 1040}%
\special{pa 2222 1072}%
\special{pa 2230 1102}%
\special{pa 2238 1134}%
\special{pa 2242 1164}%
\special{pa 2244 1194}%
\special{pa 2244 1224}%
\special{pa 2240 1252}%
\special{pa 2236 1282}%
\special{pa 2228 1310}%
\special{pa 2216 1338}%
\special{pa 2202 1366}%
\special{pa 2186 1392}%
\special{pa 2168 1418}%
\special{pa 2146 1442}%
\special{pa 2124 1466}%
\special{pa 2100 1488}%
\special{pa 2074 1508}%
\special{pa 2046 1528}%
\special{pa 2018 1546}%
\special{pa 1990 1562}%
\special{pa 1962 1576}%
\special{pa 1934 1588}%
\special{pa 1904 1600}%
\special{pa 1876 1610}%
\special{pa 1846 1620}%
\special{pa 1784 1636}%
\special{pa 1752 1642}%
\special{pa 1720 1650}%
\special{pa 1688 1656}%
\special{pa 1656 1662}%
\special{pa 1622 1668}%
\special{pa 1588 1674}%
\special{pa 1554 1682}%
\special{pa 1520 1688}%
\special{pa 1484 1696}%
\special{pa 1450 1702}%
\special{pa 1414 1708}%
\special{pa 1378 1714}%
\special{pa 1344 1718}%
\special{pa 1310 1722}%
\special{pa 1276 1724}%
\special{pa 1242 1726}%
\special{pa 1210 1726}%
\special{pa 1178 1724}%
\special{pa 1148 1720}%
\special{pa 1118 1716}%
\special{pa 1090 1708}%
\special{pa 1062 1698}%
\special{pa 1036 1686}%
\special{pa 1012 1672}%
\special{pa 990 1656}%
\special{pa 970 1636}%
\special{pa 950 1614}%
\special{pa 932 1590}%
\special{pa 916 1564}%
\special{pa 902 1536}%
\special{pa 888 1506}%
\special{pa 874 1476}%
\special{pa 862 1444}%
\special{pa 850 1410}%
\special{pa 840 1376}%
\special{pa 830 1342}%
\special{pa 820 1308}%
\special{pa 794 1204}%
\special{pa 778 1134}%
\special{pa 772 1100}%
\special{pa 764 1066}%
\special{pa 754 998}%
\special{pa 752 964}%
\special{pa 748 930}%
\special{pa 748 898}%
\special{pa 746 866}%
\special{pa 748 834}%
\special{pa 750 802}%
\special{pa 754 772}%
\special{pa 760 742}%
\special{pa 768 712}%
\special{pa 776 684}%
\special{pa 788 656}%
\special{pa 800 630}%
\special{pa 816 602}%
\special{pa 832 578}%
\special{pa 850 552}%
\special{pa 870 528}%
\special{pa 892 504}%
\special{pa 938 458}%
\special{pa 986 412}%
\special{pa 1000 400}%
\special{sp -0.045}%
}}%
\put(11.5000,-8.3000){\makebox(0,0)[lb]{{\small $U_1$}}}%
\put(17.2000,-11.9000){\makebox(0,0)[lb]{{\small $U_2$}}}%
%
{\color[named]{Black}{%
\special{pn 8}%
\special{pa 1630 1010}%
\special{pa 1370 1270}%
\special{fp}%
\special{pa 1640 1060}%
\special{pa 1410 1290}%
\special{fp}%
\special{pa 1650 1110}%
\special{pa 1450 1310}%
\special{fp}%
\special{pa 1640 1180}%
\special{pa 1500 1320}%
\special{fp}%
\special{pa 1580 1000}%
\special{pa 1330 1250}%
\special{fp}%
\special{pa 1490 1030}%
\special{pa 1300 1220}%
\special{fp}%
\special{pa 1360 1100}%
\special{pa 1270 1190}%
\special{fp}%
}}%
\put(16.6000,-9.1000){\makebox(0,0)[lb]{{\small $W_1$}}}%
\put(18.6000,-5.1000){\makebox(0,0)[lb]{{\small $W_2$}}}%
\end{picture}%
\medskip
\end{center}

\begin{proof}
Without loss of generality, we may assume that $q=0$. 
Take $\xi_0 \in U_1 \cap U_2$. 
By Proposition \ref{prop:rep-2} we have 
\begin{align*}
\ol{H}(P) 
\ge&\, 
\inf_{(\varphi_1,\varphi_2) \in C^1(\T^n)}
\max_{i=1,2}\,
[|P+D\varphi_i(\xi_0)|^2-V_i(\xi_0)+\varphi_i(\xi_0,P)-\varphi_j(\xi_0,P)]\\
\ge&\, 
\inf_{(\varphi_1,\varphi_2) \in C^1(\T^n)}
\max_{i=1,2}\,
[\varphi_i(\xi_0,P)-\varphi_j(\xi_0,P)]=0.
\end{align*}

Now, let 
$d:=\min\{ {\rm dist}(W_1,\partial W_2), \ {\rm dist}(U_1 \cup U_2, \partial W_1)\}>0$.
There exists $\ep_0>0$ such that
\begin{equation}\label{flat:eqn1}
V_i(\xi) \ge \ep_0 \quad \text{for } x \in \T^n \setminus W_1,
\ i=1,2.
\end{equation}
We define a smooth function $\varphi$ on $\T^n$ such that
\begin{align}
&\varphi(\xi)= - P \cdot \xi \ \text{on } W_1, \
\varphi(\xi)=0\ \text{on } \T^n \setminus W_2, \nonumber\\
& |D\varphi| \le \dfrac{C|P|}{d} \ \text{on } \T^n. 
\label{sub-phi}
\end{align}
Notice that
$$
|P+D\varphi(\xi)|^2-V_i(\xi)=
\begin{cases}
=-V_i(\xi) \le 0, \qquad&\text{on } W_1,\\
\le \dfrac{C|P|^2}{d^2}-\ep_0, &\text{on } \T^n \setminus W_1.
\end{cases}
$$
Thus, $|P+D\varphi(\xi)|^2-V_i(\xi) \le 0$ on $\T^n$ provided
that $|P| \le d \sqrt{\ep_0}/C=:\gam$.
We hence have that $(\varphi,\varphi,0)$ is a subsolution
of $({\rm E}_P)$ for $|P| \le \gam$.
Therefore $\ol{H}(P) \le 0$ for $|P| \le \gam$ by Proposition \ref{prop:rep-1}.
\end{proof}

\begin{rem} \label{flat:rem1}
(i) In fact, the result of Theorem \ref{thm:main4} still holds
for more general Hamiltonians 
$$
H_i(\xi,p):=F_i(\xi,p)-V_i(\xi), 
$$
where $F_i\in C(\T^n\times\R^n)$ and $V_i\in C(\T^n)$ 
are assumed to satisfy 
\begin{itemize}
\item[{\rm (a)}] 
the functions $p\mapsto F_{i}(\xi,p)$ are convex and 
$F_{i}(\xi,p)\ge F_{i}(\xi,0)=0$
for all $(\xi,p)\in\T^n\times\R^{n}$, 
\item[{\rm (b)}] 
$V_1,\, V_2$ satisfy the conditions of Theorem \ref{thm:main4}.
\end{itemize}
(ii) Notice that the assumptions $V_i \ge 0$ and 
{$\{V_i=0\}\not=\emptyset$ }
for $i=1,2$ are
just for simplicity. In general, we can normalize $V_i$ by
$V_i - \min_{\xi\in \T^n} V_i(\xi)$ to get back to such situation.\\
(iii) From the proof of Theorem \ref{thm:main4} 
we have 
$$
v_1(\xi,P)=v_2(\xi,P) \quad \text{for all} \  \xi \in \{V_1=0\} \cap \{V_2=0\}
$$
for any solution $(v_1(\cdot,P),v_2(\cdot,P))$ of $({\rm E}_P)$.\\
(iv) By Proposition \ref{prop:max-H1H2} we can give another proof 
to Theorem \ref{thm:main4} as follows. 
In this case, we explicitly have
$$
K(\xi,p)=\max \{ |p|^2-V_1(\xi), |p|^2-V_2(\xi)\} = |p|^2 -V(\xi)
$$
where $V(\xi)=\min \{V_1(\xi), V_2(\xi)\}$.
Note that $V \ge 0$ and $\{V=0\}= \{V_1=0\} \cup \{V_2=0\}$.
Hence, we can either repeat the above proof for single equations
to show that $\ol{H}(P) \le \ol{K}(P) = 0$ for $|P| \le \gam$
or we can use Concordel's result directly.
\end{rem}


Notice that condition \eqref{flat.con} is crucial and 
plays an important role in the construction of 
the subsolution $(\varphi,\varphi,0)$ of $(\textrm{E}_P)$ 
and could not be removed in the proof of Theorem \ref{thm:main4}.
We point out in the next Theorem that
there are cases when \eqref{flat.con} does not hold, then the
flatness near the origin of $\ol{H}$ does not appear.

\begin{thm} \label{thm:main5}
Assume that $V_i \ge 0$ in $\T^n$ and 
\begin{equation} \label{not.flat}
\{V_1=0\}=\{V_2=0\}=\{\xi=(\xi_1,\cdots,\xi_n) \in \T^n:\ \xi_j=1/2 
\ \text{for } j \ge 2\}=:K. 
\end{equation}
The following hold. 
\begin{itemize}
\item[(i)] 
There exists $\gam>0$ such that 
$\ol{H}(P)=|P_1|^2$ provided that 
$|P'| \le \gam$
for any $P=(P_1,P')\in\R\times\R^{n-1}$. 
\item[(ii)] 
$\ol{H}(P) \ge |P_1|^2$ 
for all $P \in \R^n$.  

\end{itemize}
\end{thm}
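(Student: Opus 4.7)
My plan is to prove (ii) first via the min-max representation of Proposition \ref{prop:rep-2}, and then use (ii) together with Proposition \ref{prop:rep-1} for (i), following the subsolution construction in the proof of Theorem \ref{thm:main4}.

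For (ii), I would start from Proposition \ref{prop:rep-2} and restrict the inner maximum from $\xi \in \T^n$ to $\xi \in K$, where $V_1 = V_2 = 0$. Parametrizing $K$ by $\xi_1 \in [0,1]$ and setting $f(\xi_1) := \phi_1(\xi_1, 1/2, \ldots, 1/2)$ and $g(\xi_1) := \phi_2(\xi_1, 1/2, \ldots, 1/2)$, the elementary bound $|P + D\phi_i|^2 \ge (P_1 + \partial_1 \phi_i)^2$ reduces the inf-sup to a purely one-dimensional problem in $\xi_1$. Replacing the max over $\xi_1 \in [0, 1]$ by the average and using Jensen's inequality together with $\int_0^1 f' = \int_0^1 g' = 0$, I would get $\max_{\xi_1}[(P_1 + f')^2 + (f - g)] \ge P_1^2 + \int_0^1 (f - g)\,d\xi_1$ and a symmetric lower bound with $f$ and $g$ swapped. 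Taking the larger of these two gives $\ol{H}(P) \ge P_1^2 + \bigl|\int_0^1 (f - g)\,d\xi_1\bigr| \ge P_1^2$.

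For (i), it suffices in view of (ii) to show $\ol{H}(P) \le P_1^2$ when $|P'| \le \gamma$. I would build a subsolution of $\EP$ of the form $(\phi(\xi'), \phi(\xi'), P_1^2)$ with $\phi$ depending only on $\xi' = (\xi_2, \ldots, \xi_n)$. The choice $\phi_1 = \phi_2$ cancels the coupling, and the identity $|P + D\phi|^2 = P_1^2 + |P' + D'\phi|^2$ reduces the subsolution property to
\[
|P' + D'\phi(\xi')|^2 \le W(\xi') := \min_{i = 1,2} \min_{\xi_1 \in \T^1} V_i(\xi_1, \xi') \qquad \text{on } \T^{n-1}.
\]
Assumption \eqref{not.flat} forces $W$ to be continuous and nonnegative on $\T^{n-1}$ and to vanish exactly at the single interior point $\xi_*' = (1/2, \ldots, 1/2) \in (0,1)^{n-1}$. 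I would then mimic the construction in the proof of Theorem \ref{thm:main4} on $\T^{n-1}$: pick small balls $W_1 \Subset W_2 \subset (0,1)^{n-1}$ about $\xi_*'$, set $d := \min\{\dist(W_1, \partial W_2), \dist(\xi_*', \partial W_1)\}$ and $\ep_0 := \min_{\T^{n-1} \setminus W_1} W > 0$, and define a smooth $\phi$ with $\phi(\xi') = -P' \cdot (\xi' - \xi_*')$ on $W_1$, $\phi \equiv 0$ off $W_2$, and $|D'\phi| \le C|P'|/d$ throughout. Then $|P' + D'\phi|^2$ vanishes on $W_1$ and is at most $(1 + C/d)^2 |P'|^2 \le \ep_0 \le W$ off $W_1$, provided $|P'| \le \gamma$ for a suitable $\gamma > 0$. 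Proposition \ref{prop:rep-1} then yields $\ol{H}(P) \le P_1^2$.

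The main obstacle is identifying the correct ansatz in (i): the coupled subsolution problem must be degenerated to a one-equation, lower-dimensional cell problem. The choice $\phi_1 = \phi_2 = \phi(\xi')$ accomplishes this precisely because the zero set $K$ is a full closed loop in the $\xi_1$-direction, so that the reduced potential $W$ lives on $\T^{n-1}$ and has a single isolated zero at an interior point of $(0,1)^{n-1}$, placing us exactly in the framework of Theorem \ref{thm:main4}.
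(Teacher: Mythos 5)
Your proof is correct and takes essentially the same approach as the paper: for (i), the same ansatz $\phi_1=\phi_2=\phi(\xi')$ independent of $\xi_1$ reducing matters to a single-equation cell problem on $\T^{n-1}$ in the spirit of Theorem \ref{thm:main4}, and for (ii), the same averaging-over-$K$ argument combined with the symmetry in swapping the two components. The only cosmetic difference is that for (ii) you start from the explicit min-max formula of Proposition \ref{prop:rep-2} rather than the implicit formula \eqref{rep-implicit} used in the paper, which actually spells out more cleanly the step the paper leaves implicit (taking the larger of the two integrals $\pm\int_K(\phi_1-\phi_2)\,d\xi_1$).
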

\begin{proof}
Firstly, we prove that $\ol{H}(P) \le |P_1|^2$ provided that
$|P'| \le \gam$ for some $\gam>0$ small enough by using
exactly the same idea in the proof of Theorem \ref{thm:main4}.
We build a function $\varphi(\xi)=\varphi(\xi_2,\cdots,\xi_n)
{\in C^{1}(\T^n)}$, which
does not depend on $\xi_1$, so that
$$
\sum_{j=2}^n |P_j+\varphi_{\xi_j}(\xi)|^2 - V_i(\xi) \le 0\ \text{on } \T^n
$$
for $i=1,2$ and for $|P'| \le \gam$ with $\gam>0$ small enough.
Thus
$$
|P+D\varphi(\xi)|^2 - V_i(\xi) \le |P_1|^2 \ \text{on } \T^n
$$
for $i=1,2$.  
By Proposition \ref{prop:rep-1},  
$\ol{H}(P) \le |P_1|^2$. 

We now prove that $\ol{H}(P) \ge |P_1|^2$.
For each $\xi_0\in K$, we have in view of \eqref{rep-implicit}
\begin{align*}
\ol{H}(P) &= \inf_{\varphi \in C^1(\T^n)} \max_{\xi \in \T^n}
[|P+D\varphi(\xi)|^2-V_1(\xi)+v_1(\xi,P)-v_2(\xi,P)]\\
&\ge \inf_{\varphi \in C^1(\T^n)}
[|P+D\varphi(\xi_0)|^2+v_1(\xi_0,P)-v_2(\xi_0,P)],
\end{align*}
and similarly
\begin{align*}
\ol{H}(P) \ge \inf_{\psi \in C^1(\T^n)}
[|P+D\psi(\xi_0)|^2+v_2(\xi_0,P)-v_1(\xi_0,P)].
\end{align*}
Take an arbitrary function $\varphi \in C^1(\T^n)$ and observe that
\begin{align*}
&\int_K |P+D\varphi(\xi)|^2\, d\xi_1\\
\ge& \int_K |P_1+\varphi_{\xi_1}(\xi)|^2 \, d\xi_1
=\int_K |P_1|^2+|\varphi_{\xi_1}(\xi)|^2 + 2 P_1 \varphi_{\xi_1}(\xi)\, d \xi_1\\
\ge & \int_K |P_1|^2 + 2 P_1 \varphi_{\xi_1}(\xi)\,d \xi_1 
= |P_1|^2.
\end{align*}
Thus, it is clear to see that $\ol{H}(P) \ge |P_1|^2$, which implies the result.
\end{proof}

The above two Theorems 
describe several examples that we can obtain similar results 
of the flat part or non-flat part of $\ol{H}$
to those of single Hamilton--Jacobi equations   
in \cite{C1,C2}. 
Indeed, the structures on the potentials $V_i$ for $i=1,2$ are 
very related in such a way that we obtain 
the {shape} of $\ol{H}$ 
like for single equations.
We rely on the idea of building the subsolutions
$(\varphi,\psi,\ol{H}(P))$ of $\EP$ where $\varphi=\psi$,
which does not work in general cases.

Next, we start investigating the properties of $\ol{H}$ in some cases where 
the structures of the potentials $V_i$ for $i=1,2$ are widely different and
in general we cannot expect $\ol{H}$ to have simple properties.
The next question is that: Can we read of information
of the effective Hamiltonian in the case where 
$\{V_1=\min_{\xi\in \T^n} V_1(\xi)\} \cap \{V_2=\min_{\xi\in \T^n} V_2(\xi)\}
=\emptyset$?

\begin{thm} \label{thm:main6}
Let $n=1$ and assume that for $\ep_0>0$ small enough 
the following properties hold. 
\begin{itemize}
\item[(a)]
$\{V_1=0\}=[\dfrac{4}{16},\dfrac{12}{16}]$, 
$\{V_1=-\ep_0\}=[0,1] \setminus (\dfrac{3}{16},\dfrac{13}{16})$,
and $-\ep_0 \le V_1 \le 0$ on $\T$
for some $\ep_0>0$.
\item[(b)] $\{V_2=0\}=[\dfrac{7}{16},\dfrac{9}{16}]$,
$\{V_2=2\}=[0,1] \setminus (\dfrac{6}{16},\dfrac{10}{16})$,
and $0 \le V_2 \le 2$ on $\T$.
\end{itemize}
There exists $\gam>0$ such that
$\ol{H}(P)=0$ 
for $|P| \le \gam$. 
\end{thm}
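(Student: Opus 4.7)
My plan is to show $\bar H(P)\ge 0$ for every $P\in\R$ and $\bar H(P)\le 0$ for $|P|$ small, separately. The key structural fact is that $I:=\{V_1=0\}\cap \{V_2=0\}=[7/16,9/16]$ is nonempty, even though $V_1\le 0$ prevents a direct appeal to Theorem \ref{thm:main4} or the subsolution construction of the form $(\varphi,\varphi,0)$ used there.

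For the lower bound I invoke the min-max formula of Proposition \ref{prop:rep-2}: given any $(\phi_1,\phi_2)\in C^{1}(\T)^2$, either there is $\xi^*\in I$ with $(\phi_1-\phi_2)(\xi^*)\ge 0$, so the $i=1$ entry of the max satisfies
\[
|P+\phi_1'(\xi^*)|^2-V_1(\xi^*)+(\phi_1-\phi_2)(\xi^*)\ge 0
\]
(using $V_1(\xi^*)=0$); or else $\phi_2>\phi_1$ throughout $I$, and picking $\xi^{**}\in I$ maximizing $\phi_2-\phi_1$ the $i=2$ entry gives
\[
|P+\phi_2'(\xi^{**})|^2-V_2(\xi^{**})+(\phi_2-\phi_1)(\xi^{**})>0.
\]
In either case the maximum is $\ge 0$, so taking the infimum over $(\phi_1,\phi_2)$ yields $\bar H(P)\ge 0$.

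For the upper bound I construct a Lipschitz viscosity subsolution $(\phi_1,\phi_2,0)$ of $({\rm E}_P)$ for $|P|\le \gam$ and conclude by Proposition \ref{prop:rep-1}. Setting $\psi:=\phi_2-\phi_1$ and using $V_1\le 0\le V_2$, the subsolution inequalities become
\[
|P+\phi_1'|^2+|V_1|\le \psi,\qquad |P+\phi_2'|^2+\psi\le V_2.
\]
I take $\phi_1$ to be the piecewise-linear periodic function with slope $-P$ on $[4/16,12/16]$ and slope $+P$ on the complementary arc, and set $\psi:=c V_2$ for a small constant $c\in(0,1)$. Then $\phi_1=\phi_2$ precisely on $\{V_2=0\}=I$, which is consistent with the fact that the inequalities force $\phi_1'=\phi_2'=-P$ and $\psi=0$ on $I$. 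Verification: on $I$ both inequalities trivialize to $0\le 0$; on $[4/16,12/16]\setminus I$ we have $V_1=0$ and $P+\phi_2'=cV_2'$, so the second inequality reduces to
\[
c^2(V_2')^2\le (1-c)V_2,
\]
which holds for $c$ small when $V_2$ has at-most quadratic vanishing near $\partial I$ (e.g.\ $V_2\in C^{1,1}$); on the complementary arc $V_2\equiv 2$ so $V_2'=0$, and the two inequalities become $4P^2+|V_1|\le 2c$ and $4P^2+2c\le 2$, both satisfied by taking $c=\ep_0$ and $\gam$ sufficiently small (recall $|V_1|\le \ep_0$). At each Lipschitz corner $\xi=4/16,12/16$ of $\phi_1$, one side of the corner is convex so $D^+\phi_1=\emptyset$ (no viscosity test needed), and the concave side gives the worst-case condition $4P^2\le 2c$, again ensured.

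The principal obstacle is the transition estimate on $[4/16,12/16]\setminus I$: near $\partial I$ the inequality $(\psi')^2+\psi\le V_2$ is tight because both sides vanish. The ansatz $\psi=cV_2$ resolves this by reducing the constraint to a pointwise bound on $(V_2')^2/V_2$, which is controllable precisely when $V_2$ has at-most quadratic vanishing. Combining the two bounds gives $\bar H(P)=0$ for all $|P|\le \gam$, as asserted.
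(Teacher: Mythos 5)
Your lower bound is correct. It reaches $\ol{H}(P)\ge 0$ via Proposition \ref{prop:rep-2} and the observation that on $I=[7/16,9/16]$ both potentials vanish, whereas the paper uses Lemma \ref{flat:lem1} ($\ol{H}(P)\ge-\tfrac12\min(V_1+V_2)$), but these are essentially the same calculation packaged differently and both valid here.

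The upper bound has a genuine gap. Your ansatz $\phi_2-\phi_1=cV_2$ ties the corrector to the unknown shape of $V_2$, and the resulting constraint $c^2(V_2')^2\le(1-c)V_2$ on the transition zone $(6/16,7/16)\cup(9/16,10/16)$ requires $V_2$ to be Lipschitz and to vanish at least quadratically at $\partial\{V_2=0\}$. Neither is part of the hypotheses: the theorem only assumes $V_2\in C(\T)$ with the stated level sets, so $V_2$ may vanish linearly near $7/16$ and $9/16$ (making $(V_2')^2/V_2$ unbounded) or may fail to be differentiable at all, in which case $\phi_2$ is not even a well-defined Lipschitz function. You flag the requirement ``$V_2$ has at-most quadratic vanishing (e.g.\ $V_2\in C^{1,1}$)'' yourself, but this is precisely the assumption that is missing and cannot be removed in your argument.

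The paper sidesteps this entirely by exploiting the numerology of the intervals: it takes $W_1=(6/16,10/16)$, $W_2=(5/16,11/16)$, $W_3=(4/16,12/16)$ and interpolates $\psi$ (from $-P\xi$ to a constant $C_2$) only in the buffer $W_2\setminus W_1$, which lies inside $\{V_2=2\}$. On $W_1$ one has $\psi\equiv-P\xi$, so $P+D\psi=0$ and the second inequality reduces to $-V_2\le 0$ with no estimate on $V_2$ needed; on $\T\setminus W_1$ the potential equals $2$, providing fixed slack $1-2+C_2\le 0$ regardless of how $\psi$ interpolates. Thus the construction never sees the shape of $V_2$ in the transition zone, and it works for merely continuous $V_2$. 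To repair your argument you would need to decouple $\psi$ from $V_2$ in the same spirit — e.g.\ let $\psi$ be any fixed Lipschitz bump equal to $0$ on $I$, equal to a constant on $\{V_2=2\}$, and interpolated where $V_2\equiv 2$ — rather than setting $\psi=cV_2$.
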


\begin{lem} \label{flat:lem1}
We have
$$
\ol{H}(P) \ge -\dfrac{1}{2} \min_{\xi \in \T^n} (V_1+V_2)(\xi).
$$
\end{lem}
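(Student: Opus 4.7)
The plan is to exploit the specific form $H_i(\xi,p)=|p|^2-V_i(\xi)$, in which the kinetic part is non-negative in $p$, by summing the two components of the cell problem at a common point so that the coupling terms cancel.

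Concretely, let $(v_1,v_2,\ol{H}(P))$ be a solution of $\EP$. By the coercivity assumption (A1), the $v_i$ are Lipschitz continuous on $\T^n$. Since $p\mapsto|p|^2$ is convex, the mollification lemma stated just before Proposition~\ref{prop:rep-2} guarantees that $(v_{1\del},v_{2\del},\ol{H}(P)+\om(\del))$ is a \emph{classical} subsolution of $\EP$, i.e.
\begin{align*}
|P+Dv_{1\del}(\xi)|^2-V_1(\xi)+v_{1\del}(\xi)-v_{2\del}(\xi)&\le \ol{H}(P)+\om(\del),\\
|P+Dv_{2\del}(\xi)|^2-V_2(\xi)+v_{2\del}(\xi)-v_{1\del}(\xi)&\le \ol{H}(P)+\om(\del)
\end{align*}
pointwise for every $\xi\in\T^n$.

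Adding the two inequalities, the coupling terms $v_{1\del}-v_{2\del}$ and $v_{2\del}-v_{1\del}$ cancel; dropping the non-negative kinetic terms $|P+Dv_{i\del}(\xi)|^2\ge 0$ yields
\[
-(V_1+V_2)(\xi)\le 2\ol{H}(P)+2\om(\del)\quad\text{for every }\xi\in\T^n.
\]
Minimizing over $\xi\in\T^n$ and sending $\del\to 0$ produces the desired bound $\ol{H}(P)\ge -\tfrac{1}{2}\min_{\xi\in\T^n}(V_1+V_2)(\xi)$.

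The argument is essentially direct and I do not foresee a serious obstacle. The only delicate point is passing from the viscosity cell problem to a pointwise inequality; this is handled cleanly by the mollification lemma, thanks to convexity of $p\mapsto|p|^2$. An equivalent route would be to invoke Lipschitz regularity and Rademacher's theorem, evaluate both equations at almost every $\xi$, sum, and then take the minimum using continuity of $V_1+V_2$.
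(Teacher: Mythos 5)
Your proof is correct and takes essentially the same approach as the paper: sum the two cell-problem equations so the coupling terms cancel and the nonnegative kinetic terms can be dropped. The paper uses exactly the Rademacher/a.e.\ alternative you mention at the end (the Lipschitz correctors satisfy the equations almost everywhere, and continuity of $V_1+V_2$ upgrades the a.e.\ inequality to the minimum), while your primary mollification route is an equally valid way to justify the pointwise summation.
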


\begin{proof}
Sum up the two equations in $({\rm E}_P)$ to get
$$
|P+Dv_1|^2+|P+Dv_2|^2 - V_1-V_2= 2 \ol{H}(P), 
$$
which implies $2\ol{H}(P) \ge -(V_1+V_2)(\xi))$ for a.e. $\xi \in \T^n$, 
and the proof is complete.
\end{proof}

\begin{proof}[Proof of Theorem {\rm\ref{thm:main6}}]
Noting that $\min_{\xi \in \T} (V_1+V_2)(\xi)=0$, 
and $\{V_1=-\ep_0\} \cap \{V_2=0\} = \emptyset$, 
we have $\ol{H}(P) \ge 0$ by Lemma \ref{flat:lem1}.
We construct a subsolution $(\varphi, \psi,0)$ of 
$\EP$ for $|P|$ small enough. 
Let
$$
W_1=(\dfrac{6}{16},\dfrac{10}{16}),\
W_2=(\dfrac{5}{16},\dfrac{11}{16}),\
W_3=(\dfrac{4}{16},\dfrac{12}{16}).
$$
Let  $P<0$ for simplicity.
We define the functions $\varphi, \psi$ by 
$$
\varphi(\xi) :=
\begin{cases}
-P\cdot \xi \qquad &\text{for} \ x\in W_2\\
0 &\text{for} \ \xi \in \T \setminus W_3
\end{cases}
$$
and $|D\varphi| \le C_1|P|$ for some $C_1>0$, 
$0\le \varphi \le -P\cdot \xi$ on $[0,1]$ and  
$$
\psi(\xi) =
\begin{cases}
-P\cdot \xi \qquad &\text{for} \ \xi \in W_1\\
C_2 &\xi \in \T \setminus W_2
\end{cases}
$$
for some $C_2 \in (1/64,1)$, 
$|P+D\psi| \le 1$, and $\psi \ge -P\cdot \xi$ on $[0,1]$.

We have
\begin{align*}
&|P+D\varphi(\xi)|^2-V_1(\xi)+\varphi(\xi)-\psi(\xi)\\
\le&\, 
\begin{cases}
\varphi(\xi)-\psi(\xi) \le 0 \qquad 
&\text{if} \ \xi \in W_2\\
{2(C_1^2+1)}|P|^2+\ep_0+|P| - C_2 
&\text{if} \ \xi \in \T \setminus W_2.
\end{cases}
\end{align*}
If $|P|$ and $\ep_0$ are small enough, then 
$|P+D\varphi(\xi)|^2-V_1(\xi)+\varphi(\xi)-\psi(\xi) \le 0$
on $\T$.
Besides,
\begin{align*}
&|P+D\psi(\xi)|^2-V_2(\xi)+\psi(\xi)-\varphi(\xi)\\
\le &\, 
\begin{cases}
0 \qquad \qquad 
&\text{if} \ \xi\in W_1\\
1-2+C_2-0 \le 0 
&\text{if} \ \xi \in \T \setminus W_1.
\end{cases}
\end{align*}
\vspace{13pt}

\begin{center}
\unitlength 0.1in
\begin{picture}( 60.1700, 17.2000)(  1.0300,-20.7000)
%
{\color[named]{Black}{%
\special{pn 8}%
\special{pa 130 1800}%
\special{pa 2520 1800}%
\special{fp}%
\special{sh 1}%
\special{pa 2520 1800}%
\special{pa 2454 1780}%
\special{pa 2468 1800}%
\special{pa 2454 1820}%
\special{pa 2520 1800}%
\special{fp}%
}}%
%
{\color[named]{Black}{%
\special{pn 8}%
\special{pa 3730 1800}%
\special{pa 6120 1800}%
\special{fp}%
\special{sh 1}%
\special{pa 6120 1800}%
\special{pa 6054 1780}%
\special{pa 6068 1800}%
\special{pa 6054 1820}%
\special{pa 6120 1800}%
\special{fp}%
}}%
%
{\color[named]{Black}{%
\special{pn 8}%
\special{pa 120 1800}%
\special{pa 2490 1100}%
\special{dt 0.045}%
}}%
%
{\color[named]{Black}{%
\special{pn 8}%
\special{pa 3730 1800}%
\special{pa 6100 1100}%
\special{dt 0.045}%
}}%
%
{\color[named]{Black}{%
\special{pn 13}%
\special{pa 800 1600}%
\special{pa 1800 1300}%
\special{fp}%
}}%
%
{\color[named]{Black}{%
\special{pn 13}%
\special{pa 600 1800}%
\special{pa 630 1786}%
\special{pa 658 1770}%
\special{pa 682 1750}%
\special{pa 702 1724}%
\special{pa 718 1696}%
\special{pa 730 1670}%
\special{pa 750 1642}%
\special{pa 786 1610}%
\special{pa 800 1600}%
\special{pa 790 1610}%
\special{sp}%
}}%
%
{\color[named]{Black}{%
\special{pn 13}%
\special{pa 1800 1300}%
\special{pa 1828 1320}%
\special{pa 1850 1340}%
\special{pa 1866 1364}%
\special{pa 1880 1392}%
\special{pa 1888 1422}%
\special{pa 1894 1456}%
\special{pa 1896 1490}%
\special{pa 1898 1524}%
\special{pa 1898 1558}%
\special{pa 1900 1594}%
\special{pa 1902 1628}%
\special{pa 1908 1660}%
\special{pa 1914 1692}%
\special{pa 1926 1722}%
\special{pa 1942 1748}%
\special{pa 1964 1774}%
\special{pa 1990 1794}%
\special{pa 2020 1802}%
\special{pa 2040 1800}%
\special{sp}%
}}%
%
{\color[named]{Black}{%
\special{pn 13}%
\special{pa 3730 800}%
\special{pa 4400 800}%
\special{fp}%
}}%
%
{\color[named]{Black}{%
\special{pn 13}%
\special{pa 4400 800}%
\special{pa 4432 810}%
\special{pa 4462 820}%
\special{pa 4486 838}%
\special{pa 4506 858}%
\special{pa 4522 884}%
\special{pa 4536 914}%
\special{pa 4548 944}%
\special{pa 4556 978}%
\special{pa 4564 1012}%
\special{pa 4570 1048}%
\special{pa 4578 1084}%
\special{pa 4584 1118}%
\special{pa 4592 1150}%
\special{pa 4600 1182}%
\special{pa 4608 1214}%
\special{pa 4618 1244}%
\special{pa 4630 1274}%
\special{pa 4642 1302}%
\special{pa 4656 1330}%
\special{pa 4672 1358}%
\special{pa 4690 1384}%
\special{pa 4708 1410}%
\special{pa 4730 1434}%
\special{pa 4756 1454}%
\special{pa 4782 1472}%
\special{pa 4808 1490}%
\special{pa 4810 1490}%
\special{sp}%
}}%
%
{\color[named]{Black}{%
\special{pn 13}%
\special{pa 4810 1490}%
\special{pa 5000 1430}%
\special{fp}%
}}%
%
{\color[named]{Black}{%
\special{pn 13}%
\special{pa 4990 1430}%
\special{pa 5036 1386}%
\special{pa 5058 1362}%
\special{pa 5078 1338}%
\special{pa 5098 1312}%
\special{pa 5118 1286}%
\special{pa 5134 1260}%
\special{pa 5150 1230}%
\special{pa 5164 1202}%
\special{pa 5176 1172}%
\special{pa 5196 1112}%
\special{pa 5204 1082}%
\special{pa 5214 1050}%
\special{pa 5222 1020}%
\special{pa 5242 958}%
\special{pa 5254 928}%
\special{pa 5270 898}%
\special{pa 5288 874}%
\special{pa 5310 852}%
\special{pa 5336 832}%
\special{pa 5366 818}%
\special{pa 5394 804}%
\special{pa 5400 800}%
\special{sp}%
}}%
%
{\color[named]{Black}{%
\special{pn 13}%
\special{pa 5390 800}%
\special{pa 5980 810}%
\special{fp}%
}}%
%
{\color[named]{Black}{%
\special{pn 8}%
\special{pa 130 1790}%
\special{pa 130 350}%
\special{fp}%
\special{sh 1}%
\special{pa 130 350}%
\special{pa 110 418}%
\special{pa 130 404}%
\special{pa 150 418}%
\special{pa 130 350}%
\special{fp}%
}}%
%
{\color[named]{Black}{%
\special{pn 8}%
\special{pa 3730 1800}%
\special{pa 3730 370}%
\special{fp}%
\special{sh 1}%
\special{pa 3730 370}%
\special{pa 3710 438}%
\special{pa 3730 424}%
\special{pa 3750 438}%
\special{pa 3730 370}%
\special{fp}%
}}%
%
{\color[named]{Black}{%
\special{pn 8}%
\special{pa 780 1600}%
\special{pa 780 1800}%
\special{dt 0.045}%
}}%
%
{\color[named]{Black}{%
\special{pn 8}%
\special{pa 1800 1320}%
\special{pa 1790 1800}%
\special{dt 0.045}%
}}%
%
{\color[named]{Black}{%
\special{pn 8}%
\special{pa 4410 800}%
\special{pa 4410 1800}%
\special{dt 0.045}%
}}%
%
{\color[named]{Black}{%
\special{pn 8}%
\special{pa 5390 810}%
\special{pa 5400 1800}%
\special{dt 0.045}%
}}%
%
{\color[named]{Black}{%
\special{pn 8}%
\special{pa 4810 1480}%
\special{pa 4810 1800}%
\special{dt 0.045}%
}}%
%
{\color[named]{Black}{%
\special{pn 8}%
\special{pa 4990 1420}%
\special{pa 4990 1810}%
\special{dt 0.045}%
}}%
%
{\color[named]{Black}{%
\special{pn 13}%
\special{pa 2040 1800}%
\special{pa 2470 1790}%
\special{fp}%
}}%
%
{\color[named]{Black}{%
\special{pn 13}%
\special{pa 130 1800}%
\special{pa 600 1790}%
\special{fp}%
}}%
\put(3.2000,-22.0000){\makebox(0,0)[lb]{Fig. 4.3. The graph of $\varphi$.}}%
\put(39.0000,-21.9000){\makebox(0,0)[lb]{Fig. 4.4. The graph of $\psi$.}}%
\put(61.1000,-17.9000){\makebox(0,0)[lb]{{\small$\xi$}}}%
\put(25.0000,-17.8000){\makebox(0,0)[lb]{{\small$\xi$}}}%
\put(22.6000,-10.8000){\makebox(0,0)[lb]{$-P \cdot \xi$}}%
\put(12.1000,-17.8000){\makebox(0,0)[lb]{{\small $W_2$}}}%
\put(9.6000,-19.4000){\makebox(0,0)[lb]{{\small $W_3$}}}%
\put(46.7000,-19.5000){\makebox(0,0)[lb]{{\small $W_2$}}}%
\put(48.1000,-17.9000){\makebox(0,0)[lb]{{\small $W_1$}}}%
\put(34.9000,-8.5000){\makebox(0,0)[lb]{$C_2$}}%
\end{picture}%

\medskip
\end{center}

Thus $(\varphi,\psi,0)$ is a subsolution of $({\rm E}_P)$, 
and the proof is complete.
\end{proof}

\begin{rem}\label{flat:rem2}
It is worth to notice that
$$
\ol{H}(0) \ne -\dfrac{1}{2} \min_{\xi \in \T^n} (V_1+V_2)(\xi)
$$
in general.
Indeed, set 
\begin{align*}
&V_1(\xi)=4\pi^2 \sin^2 (2\pi \xi)+\cos(2\pi \xi) - \sin(2\pi \xi),\\
&V_2(\xi)=4\pi^2 \cos^2 (2\pi \xi)+\sin(2\pi \xi) - \cos(2\pi \xi). 
\end{align*}
Clearly 
$(\cos(2\pi \xi), \sin(2\pi \xi),0)$ is a solution of (E$_0$), 
and hence $\ol{H}(0)=0$. 
In this case
$$
\ol{H}(0)=0 \ne -2\pi^2=-\dfrac{1}{2}(V_1+V_2)(\xi) 
\ \text{for all} \ \xi \in \T.
$$
\end{rem}

In Theorem \ref{thm:main4} the fact that 
$\Pi(\R^n\setminus (U_1\cup U_2))$ is connected, 
where 
$U_i=\{V_i=0\}$
plays an important role in the construction of
subsolutions as stated just before 
Theorem \ref{thm:main5}. 
In the next couple of Theorems we make new observations that 
we can get the flat parts of effective Hamiltonians even though 
$\Pi(\R^n\setminus (U_1\cup U_2))$ is not connected.

\begin{thm}\label{thm:main7}
Let $n=1$ and assume 
$V_1 \equiv 0$, 
$V_2 \ge 0$ on $[0,1]$ and $\{V_2=0\}=\{1/2\}$.
Then there exists $\gam>0$ such that
$\ol{H}(P)=0$ 
for $|P| \le \gam$. 
\end{thm}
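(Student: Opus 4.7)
The plan is to establish both $\ol{H}(P) \ge 0$ and $\ol{H}(P) \le 0$ for $|P|$ sufficiently small; together they give $\ol{H}(P) = 0$.

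The lower bound is immediate from Lemma~\ref{flat:lem1}: since $V_1 \equiv 0$ and $V_2 \ge 0$ with $\min_\T V_2 = 0$, we have $\min_\T(V_1+V_2) = 0$, hence $\ol{H}(P) \ge -\tfrac{1}{2}\min_\T(V_1+V_2) = 0$.

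For the upper bound, I would construct an explicit subsolution $(v_1,v_2,0)$ of $\EP$ and then apply Proposition~\ref{prop:rep-1}. Fix $\delta > 0$ small and set $\delta_0 := \min_{|\xi - 1/2| \ge \delta} V_2(\xi)$, which is positive because $V_2$ is continuous with $\{V_2 = 0\} = \{1/2\}$. Pick $\rho \in C^1(\T)$ with $\rho \ge 0$, $\rho \equiv 0$ on $[1/2-\delta,\, 1/2+\delta]$, and $\int_0^1 \rho\,d\xi = 1$, and define
\[
v_1(\xi) := -P\xi + P\int_0^\xi \rho(t)\,dt, \qquad v_2(\xi) := v_1(\xi) + P^2\rho(\xi)^2.
\]
Both $v_1$ and $v_2$ are Lipschitz and $\T$-periodic (periodicity of $v_1$ uses $\int_0^1 \rho = 1$, and $v_2$ inherits it from $\rho$). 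Since $P + v_1' = P\rho$, the first subsolution inequality
\[
|P + v_1'|^2 + v_1 - v_2 = P^2\rho^2 - P^2\rho^2 = 0
\]
holds with equality a.e. Computing $v_2' = -P + P\rho(1 + 2P\rho')$ reduces the second inequality to
\[
P^2\rho(\xi)^2\bigl[(1+2P\rho'(\xi))^2 + 1\bigr] \le V_2(\xi) \quad \text{a.e.}
\]
On $\{\rho = 0\}$, which contains the troublesome neighborhood of $\xi = 1/2$, the left side vanishes and the inequality is trivial. On $\{\rho > 0\}$ the bracket is bounded above by $5$ as soon as $|P|\|\rho'\|_{L^\infty(\T)} \le 1/2$, and then the inequality follows from $V_2 \ge \delta_0$ once $|P|^2 \le \delta_0/(5\|\rho\|_{L^\infty(\T)}^2)$. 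Hence $(v_1,v_2,0)$ is a subsolution of $\EP$ for $|P| \le \gamma$ with some explicit $\gamma = \gamma(\rho,\delta_0) > 0$. Because $H_i$ are convex in $p$ and $v_1,v_2$ are Lipschitz, the a.e.\ inequalities imply the viscosity subsolution property, and Proposition~\ref{prop:rep-1} yields $\ol{H}(P) \le 0$.

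The main obstacle is the single point $\xi = 1/2$: there $V_2 = 0$, so the second inequality forces $v_2 - v_1 \le 0$, while the first forces $v_2 - v_1 \ge 0$. Any subsolution must therefore satisfy $v_1(1/2) = v_2(1/2)$ together with $v_1'(1/2) = v_2'(1/2) = -P$, which is incompatible with the periodicity of $v_1, v_2$ unless the ``circulation'' of gradient required by periodicity is realized entirely on the set $\{V_2 \ge \delta_0\}$ away from $\xi = 1/2$. Concentrating the corrector term $P\rho$ on such a set and choosing the coupling $v_2 - v_1 = P^2\rho^2$, which automatically vanishes where $\rho$ does and makes the first inequality hold with equality, is precisely what makes both inequalities simultaneously satisfiable for small $|P|$.
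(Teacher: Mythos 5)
Your argument is correct. Both halves are as they should be: $\ol{H}(P)\ge 0$ from Lemma~\ref{flat:lem1} (since $V_1\equiv 0$, $\min_{\T}V_2=0$), and $\ol{H}(P)\le 0$ by exhibiting a $\T$-periodic pair $(v_1,v_2)$ with $(v_1,v_2,0)$ a subsolution of $\EP$ and invoking Proposition~\ref{prop:rep-1}. Your computation checks out: $P+v_1'=P\rho$, the first inequality holds with equality $P^2\rho^2+v_1-v_2=0$, and the second reduces to $P^2\rho^2[(1+2P\rho')^2+1]\le V_2$, trivially satisfied on $\{\rho=0\}$ (which covers the neighbourhood of $\xi=1/2$) and enforced on $\{\rho>0\}\subset\{V_2\ge\delta_0\}$ for $|P|$ small. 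Since $\rho\in C^1$, your $v_1,v_2$ are in fact $C^1$, so the pointwise inequalities are already the classical (hence viscosity) ones; the a.e.-to-viscosity remark, while harmless, is not needed.

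The route is genuinely different from the paper's. The paper (cf.\ the proof of Theorem~\ref{thm:main6}) builds $\varphi,\psi$ piecewise using three nested windows $W_1\subset W_2\subset W_3$ around $1/2$: $\varphi$ interpolates between $-P\xi$ on $W_2$ and $0$ off $W_3$; $\psi$ interpolates between $-P\xi$ on $W_1$ and a tuned constant $C_2\in(M/128,M/2)$ off $W_2$, with separate gradient bounds on each piece and the simplifying assumption $P<0$. You instead encode the entire construction in one smooth weight $\rho$ vanishing near $1/2$ with $\int_0^1\rho=1$: setting $v_1'=-P+P\rho$ carries the periodicity-required ``circulation'' of the gradient onto $\supp\rho$, and the normalisation $v_2-v_1=P^2\rho^2$ is the essential trick, since it simultaneously vanishes where $V_2$ does and makes the first inequality an identity, collapsing the problem to a single pointwise bound on $V_2$. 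Both proofs implement the same idea (push the corrector where $V_2$ is bounded below), but yours is sign-symmetric in $P$, gives $C^1$ correctors, and produces an explicit $\gamma$ from $\delta_0$, $\|\rho\|_\infty$, $\|\rho'\|_\infty$; the paper's is more elementary piecewise bookkeeping with more constants to track. Your closing observation about the forced matching $v_1(1/2)=v_2(1/2)$ and $v_1'(1/2)=v_2'(1/2)=-P$ is correct and explains well why the corrector must concentrate away from $1/2$.
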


\begin{proof}[Sketch of Proof]
The proof is almost the same as the proof of Theorem \ref{thm:main6}
but let us present it here for the sake of clarity.
Since $\min_{\xi\in\T^n}(V_1+V_2)(\xi)=0$, we have 
$\ol{H}(P)\ge0$ by Lemma \ref{flat:lem1}. 
Let
$$
W_1=(\dfrac{3}{8},\dfrac{5}{8}),\
W_2= (\dfrac{2}{8}, \dfrac{6}{8}),\
W_3=(\dfrac{1}{8},\dfrac{7}{8}).
$$
There exists $M \in (0,1)$ so that
$$
V_2(\xi) \ge M \ \text{for} \ \xi \notin W_1.
$$
Assume $P<0$ for simplicity. We now
construct the functions $\varphi,\psi$ 
so that $(\varphi,\psi,0)$ is a subsolution of $\EP$ for 
small $|P|$, which implies the conclusion.
Take $|P| \le M/4$ first.
We define the functions $\varphi, \psi$ by 
$$
\varphi(\xi) :=
\begin{cases}
-P\cdot \xi \qquad &\text{for} \ x\in W_2\\
0 &\text{for} \ \xi \in \T \setminus W_3
\end{cases}
$$
and $|D\varphi| \le C_1|P|$ for some $C_1>0$, 
$0\le \varphi \le -P\cdot \xi$ on $[0,1]$ and  
$$
\psi(\xi) =
\begin{cases}
-P\cdot \xi \qquad &\text{for} \ \xi \in W_1\\
C_2 &\xi \in \T \setminus W_2
\end{cases}
$$
for some $C_2 \in (M/128,M/2)$, 
$|P+D\psi| \le M/2$, and $\psi \ge -P\cdot \xi$ on $[0,1]$.

We have
\begin{align*}
|P+D\varphi(\xi)|^2+\varphi(\xi)-\psi(\xi)
\le&\, 
\begin{cases}
\varphi(\xi)-\psi(\xi) \le 0 \qquad 
&\text{if} \ \xi \in W_2\\
{2(C_1^2+1)}|P|^2+|P| - C_2
&\text{if} \ \xi \in \T \setminus W_2.
\end{cases}
\end{align*}
If $|P|$ is small enough, then 
$|P+D\varphi(\xi)|^2+\varphi(\xi)-\psi(\xi) \le 0$
on $\T$.
Besides,
\begin{align*}
&|P+D\psi(\xi)|^2-V_2(\xi)+\psi(\xi)-\varphi(\xi)\\
\le &\, 
\begin{cases}
0 \qquad \qquad 
&\text{if} \ \xi\in W_1\\
\dfrac{M^2}{4}-M+C_2-0 \le \dfrac{M^2}{4}-M+\dfrac{M}{2} \le 0 
&\text{if} \ \xi \in \T \setminus W_1.
\end{cases}
\end{align*}
Thus $(\varphi,\psi,0)$ is a subsolution of $({\rm E}_P)$, 
and the proof is complete.
\end{proof}

%
%
%
%
%

We can actually generalize Theorem \ref{thm:main7} as following. 
\begin{thm}\label{thm:main7new}
Assume that $V_1 \equiv 0$, $V_2 \ge 0$ and there exist an open set
$W$ in $\T^n$  and 
a vector $q\in \R^n$ such that 
$\Pi(q+W) \Subset (0,1)^n$ and 
$\emptyset \ne \{V_2=0\} \subset W$.
Then there exists $\gam>0$ such that $\ol{H}(P)=0$ for $|P| \le \gam$.
\end{thm}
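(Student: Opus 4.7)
\textbf{Proof plan for Theorem \ref{thm:main7new}.}

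The plan is to mimic the two-sided bound used in Theorem~\ref{thm:main7}. For the lower bound, since $V_1\equiv 0$ and $V_2\ge 0$ with $\{V_2=0\}\ne\emptyset$, we have $\min_{\xi\in\T^n}(V_1+V_2)(\xi)=0$, so Lemma~\ref{flat:lem1} gives $\ol H(P)\ge 0$ for every $P\in\R^n$. For the upper bound $\ol H(P)\le 0$ with $|P|\le\gamma$ we will, as before, construct $(\varphi,\psi)\in C^{1}(\T^n)^{2}$ so that $(\varphi,\psi,0)$ is a subsolution of $(\mathrm{E}_P)$ and invoke Proposition~\ref{prop:rep-1}. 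First, by replacing $V_2$ with $\tilde V_2(\xi):=V_2(\xi-q)$ (which leaves the effective Hamiltonian unchanged, since both equations are translation-invariant in $\xi$), we may assume $q=0$ so that $W\Subset(0,1)^n$ and $\{V_2=0\}\subset W$.

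Next I choose three nested open sets $W_1\Subset W_2\Subset W_3\Subset(0,1)^n$ with $\{V_2=0\}\subset W_1$ and with $d:=\min\{\dist(\ol W_1,\partial W_2),\,\dist(\ol W_2,\partial W_3)\}>0$; by continuity on the compact set $\T^n\setminus W_1$ there is $M\in(0,1)$ with $V_2\ge M$ there. The functions $\varphi,\psi$ will be built directly from cut-offs in the fundamental domain $(0,1)^n$ and extended periodically; this is possible precisely because $\Pi(W)\Subset(0,1)^n$, so $\varphi$ and $\psi$ are constant near $\partial(0,1)^n$ and the periodic extension is $C^1$. Concretely, pick a smooth cut-off $\eta_1$ equal to $1$ on $W_2$ and $0$ outside $W_3$, and set
\[
\varphi(\xi):=-\eta_1(\xi)\,P\!\cdot\!\xi,
\]
so that $\varphi=-P\!\cdot\!\xi$ on $W_2$, $\varphi\equiv 0$ on $\T^n\setminus W_3$, $|D\varphi|\le C_1|P|$ for some $C_1=C_1(\eta_1)$, and $0\le \varphi\le -P\!\cdot\!\xi$ after adjusting signs. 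Write $\psi(\xi)=-P\!\cdot\!\xi+\zeta(\xi)$, where $\zeta\in C^{1}(\T^n)$ satisfies $\zeta\equiv 0$ on $W_1$ and $\zeta(\xi)=C_2+P\!\cdot\!\xi$ on $\T^n\setminus W_2$ for a small constant $C_2>0$ to be chosen. Then $\psi=-P\!\cdot\!\xi$ on $W_1$, $\psi\equiv C_2$ on $\T^n\setminus W_2$, and crucially $P+D\psi=D\zeta$; since $\zeta$ must rise from $0$ to $C_2+P\!\cdot\!\xi$ over a gap of width $\ge d$, an explicit interpolation gives $|D\zeta|\le C(C_2+|P|)/d$, which can be made $\le M/2$ by taking $C_2\le Md/(4C)$ and $|P|$ small.

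With these choices, I verify the two subsolution inequalities region by region. For the $\varphi$-equation $|P+D\varphi|^2+\varphi-\psi\le 0$: on $W_1$ both sides vanish; on $W_2\setminus W_1$ the gradient term vanishes and $\varphi-\psi=-\zeta\le 0$; on $W_3\setminus W_2$ one has $|P+D\varphi|^2\le C|P|^2$ and $\varphi-\psi\le C|P|-C_2$, negative for small $|P|$; on $\T^n\setminus W_3$ the estimate reduces to $|P|^2-C_2\le 0$. For the $\psi$-equation $|P+D\psi|^2-V_2+\psi-\varphi\le 0$: on $W_1$ all terms favorable; on $W_2\setminus W_1$ we use $|P+D\psi|=|D\zeta|\le M/2$, $V_2\ge M$, and $\psi-\varphi=\zeta\le C_2+|P|$, giving $M^2/4-M+C_2+|P|\le 0$; on $\T^n\setminus W_2$ the inequality becomes $|P|^2-V_2+C_2-\varphi\le |P|^2-M+C_2+C|P|\le 0$. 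Every inequality is satisfied once $\gamma$ and $C_2$ are chosen small enough depending only on $M$ and $d$.

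The main obstacle is the construction and estimate of $\zeta$: the requirement $|D\zeta|\le M/2$ is not merely a qualitative smoothness condition but forces $C_2$ to be chosen comparable to $Md$, while the constant $C_2$ must simultaneously dominate $|P|$-terms appearing on $\T^n\setminus W_3$ in the $\varphi$-inequality. Balancing these two competing constraints is what fixes the final threshold $\gamma=\gamma(M,d)$, and is the step where the condition $\Pi(q+W)\Subset(0,1)^n$ is used essentially (so that the positive gap $d$ exists and the cut-off construction produces a genuine $C^1(\T^n)$ function). Once this is done, Proposition~\ref{prop:rep-1} yields $\ol H(P)\le 0$ for $|P|\le\gamma$, completing the proof.
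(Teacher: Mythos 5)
Your proof is correct and is exactly the generalization the paper has in mind: the paper omits the proof of this theorem with the remark that it is "basically the same as the proof of Theorem \ref{thm:main7}," and your argument is precisely that proof carried out in $\T^n$ --- the lower bound $\ol{H}\ge 0$ from Lemma \ref{flat:lem1}, and the upper bound from a subsolution pair $(\varphi,\psi,0)$ built from nested sets $W_1\Subset W_2\Subset W_3\Subset(0,1)^n$ together with Proposition \ref{prop:rep-1}, using $\Pi(q+W)\Subset(0,1)^n$ to make the cut-offs $\T^n$-periodic. Your bookkeeping of the two competing constraints fixing $C_2$ and $\gamma$ is the right way to organize the estimates; the only places that deserve a slightly more careful word in a final write-up are the sign claims ``$0\le\varphi\le -P\cdot\xi$'' and ``$\zeta\ge 0$ on $W_2\setminus W_1$,'' which in $n$ dimensions hold only after observing $|P\cdot\xi|\le\sqrt{n}\,|P|\le C_2$ for $|P|$ small and choosing a monotone interpolation for $\zeta$.
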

The proof of this Theorem is basically the same as the proof of Theorem
\ref{thm:main7}, hence omitted.
The following Corollary is a direct consequence of Theorem \ref{thm:main7new}
\begin{cor} \label{cor:main7}
Assume that $V_1,\, V_2 \ge 0$ and there exist an open set
$W$ in $\T^n$ and a vector $q\in \R^n$ such that 
$\Pi(q+W) \Subset (0,1)^n$ and 
$$
\emptyset \ne \{V_1=0\} \cap \{V_2=0\} \subset \{V_2=0\} \subset W.
$$
Then there exists $\gam>0$ such that $\ol{H}(P)=0$ for $|P| \le \gam$.
\end{cor}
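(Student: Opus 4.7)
The plan is to derive this corollary directly from Theorem \ref{thm:main7new} via a simple domination trick, together with Lemma \ref{flat:lem1} for the lower bound. The key observation is that increasing $V_1$ pointwise from $0$ to a nonnegative function only makes subsolutions easier to come by, since the Hamiltonian $H_1(\xi,p)=|p|^2-V_1(\xi)$ decreases.

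First I would establish $\ol{H}(P) \ge 0$ for all $P \in \R^n$. Since $V_1,V_2 \ge 0$ on $\T^n$ and there exists $\xi_0 \in \{V_1=0\}\cap\{V_2=0\}$ by hypothesis, one has $\min_{\xi\in\T^n}(V_1+V_2)(\xi)=0$. Lemma \ref{flat:lem1} then yields $\ol{H}(P)\ge 0$.

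Second, to obtain the matching upper bound $\ol{H}(P)\le 0$ for $|P|\le \gam$, I would introduce the auxiliary potentials $\tilde V_1\equiv 0$ and $\tilde V_2:=V_2$, and let $\tilde H_i(\xi,p):=|p|^2-\tilde V_i(\xi)$. Since $\{\tilde V_2=0\}=\{V_2=0\}\supset \{V_1=0\}\cap\{V_2=0\}\ne \emptyset$ and $\{V_2=0\}\subset W$ with $\Pi(q+W)\Subset (0,1)^n$, the assumptions of Theorem \ref{thm:main7new} are satisfied for $(\tilde H_1,\tilde H_2)$. Hence the associated effective Hamiltonian $\ol{\tilde H}$ satisfies $\ol{\tilde H}(P)=0$ for $|P|\le\gam$ for some $\gam>0$. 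By Proposition \ref{prop:cell} there exists a corresponding periodic solution $(v_1,v_2)\in C(\T^n)^2$ of the cell problem
\begin{align*}
|P+Dv_1|^2 + v_1 - v_2 &= 0, \\
|P+Dv_2|^2 - V_2 + v_2 - v_1 &= 0,
\end{align*}
which is in particular a subsolution. Because $V_1\ge 0=\tilde V_1$, the same pair satisfies
\begin{align*}
|P+Dv_1|^2 - V_1 + v_1 - v_2 &\le 0, \\
|P+Dv_2|^2 - V_2 + v_2 - v_1 &\le 0,
\end{align*}
so $(v_1,v_2,0)$ is a subsolution of $({\rm E}_P)$ for the original Hamiltonians $(H_1,H_2)$. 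Proposition \ref{prop:rep-1} then gives $\ol{H}(P)\le 0$.

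There is no serious obstacle here; the only point requiring mild care is verifying that the hypotheses of Theorem \ref{thm:main7new} transfer to the truncated pair $(\tilde V_1,\tilde V_2)$, which follows immediately from the chain of inclusions $\emptyset\ne \{V_1=0\}\cap\{V_2=0\}\subset \{V_2=0\}\subset W$ assumed in the corollary. Combining the two bounds yields $\ol{H}(P)=0$ on $\{|P|\le\gam\}$.
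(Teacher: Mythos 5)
Your proof is correct and realizes what the paper merely asserts (``The following Corollary is a direct consequence of Theorem \ref{thm:main7new}'') in a clean, modular way: you invoke Theorem \ref{thm:main7new} for the truncated pair $(\tilde V_1\equiv 0,V_2)$, extract a corrector $(v_1,v_2)$ via Proposition \ref{prop:cell}, and observe that $V_1\ge 0$ only decreases $H_1$, so $(v_1,v_2,0)$ remains a viscosity subsolution of $(\mathrm{E}_P)$ for the original Hamiltonians, whence $\ol{H}(P)\le 0$ by Proposition \ref{prop:rep-1}; combined with $\ol{H}(P)\ge 0$ from Lemma \ref{flat:lem1} (since $\{V_1=0\}\cap\{V_2=0\}\ne\emptyset$ forces $\min(V_1+V_2)=0$), this gives the result. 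This is the same monotonicity-in-$V_1$ observation the paper implicitly relies on; the only stylistic difference is that the paper would presumably note inside the proof of Theorem \ref{thm:main7new} that the explicit subsolution $(\varphi,\psi,0)$ built there still works when $-V_1\le 0$ is added, whereas you work at the level of the cell-problem correctors and the black-box conclusion, which is arguably tidier.
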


The result of Corollary \ref{cor:main7} is pretty surprising in the sense
that flat part around $0$ of $\ol{H}$ occurs even though we do not know much
information about $V_1$.
More precisely, we only need to control well $\{V_2=0\}$ and do not need to care
about $\{V_1=0\}$ except that $\{V_1=0\} \cap \{V_2=0\} \ne \emptyset$.

Finally, we consider a situation in which the requirements of Theorem \ref{thm:main7new}
and Corollary \ref{cor:main7} fail.

\begin{thm} \label{thm:main8}
We take two potentials $V^i:\T \to [0,\infty)$ such that
$V^i$ are continuous and
$\{V^i=0\}=\{y_{0i}\}$ for some $y_{0i} \in \T$ for $i=1,2$.
Assume that $V_1(\xi_1,\xi_2)=V^1(\xi_1)$ and $V_2(\xi_1,\xi_2)=V^2(\xi_2)$
for $(\xi_1,\xi_2)\in \T^2$.
Then there exists $\gam>0$ such that
$\ol{H}(P)=0$ 
for $|P| \le \gam$. 
\end{thm}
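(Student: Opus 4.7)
The product structure $V_1(\xi)=V^{1}(\xi_1)$, $V_2(\xi)=V^{2}(\xi_2)$ suggests separating variables at the subsolution level: the strategy is to build a triple $(\varphi,\psi,0)$ of the form $\varphi(\xi)=a_1(\xi_1)+a_2(\xi_2)$, $\psi(\xi)=b_1(\xi_1)+b_2(\xi_2)$ that subsolves $\EP$, where each pair $(a_i,b_i)$ comes from a one-dimensional cell problem already covered by Section~\ref{subsec:flat}. Proposition~\ref{prop:rep-1} will then give the upper bound $\ol{H}(P)\le 0$ for $|P|$ small, while the matching lower bound $\ol{H}(P)\ge 0$ is immediate from Lemma~\ref{flat:lem1}, since $\min_{\T^2}(V_1+V_2)=\min_{\T}V^{1}+\min_{\T}V^{2}=0$.

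\textbf{One-dimensional reductions.} In the $\xi_1$-direction consider the auxiliary 1D weakly coupled system on $\T$ with Hamiltonians $\tilde H_1(\xi_1,p)=p^{2}-V^{1}(\xi_1)$ and $\tilde H_2(\xi_1,p)=p^{2}$. Since $\{V^{1}=0\}=\{y_{01}\}$ is a single point, after a suitable translation it fits in an open interval $W\subset\T$ with $\Pi(q+W)\Subset(0,1)$ for some $q\in\R$. Applying Theorem~\ref{thm:main7new} to the system with its two components swapped (the effective Hamiltonian is invariant under this swap, since interchanging the two equations of $\EP$ only relabels the unknowns) gives $\gam_1>0$ so that the 1D effective Hamiltonian vanishes for $|P_1|\le\gam_1$. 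Proposition~\ref{prop:cell} then produces a $\T$-periodic continuous solution $(a_1(\cdot,P_1),b_1(\cdot,P_1))$ of
\begin{align*}
|P_1+Da_1|^2-V^{1}(\xi_1)+a_1-b_1=0,\qquad |P_1+Db_1|^2+b_1-a_1=0
\end{align*}
in the viscosity sense on $\T$. A direct application of Theorem~\ref{thm:main7new} in the $\xi_2$-direction (with $V^{2}$ now in the second component, which matches the hypotheses without swapping) yields $\gam_2>0$ and $(a_2(\cdot,P_2),b_2(\cdot,P_2))\in C(\T)^2$ with
\begin{align*}
|P_2+Da_2|^2+a_2-b_2=0,\qquad |P_2+Db_2|^2-V^{2}(\xi_2)+b_2-a_2=0
\end{align*}
for $|P_2|\le\gam_2$.

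\textbf{Assembling the 2D subsolution.} Set $\gam:=\min\{\gam_1,\gam_2\}$ and, for $|P|\le\gam$, put $\varphi:=a_1+a_2$ and $\psi:=b_1+b_2$. At any point of joint differentiability the pointwise identity
\[
|P+D\varphi|^2-V_1+\varphi-\psi=\bigl[|P_1+a_1'|^2-V^{1}+a_1-b_1\bigr]+\bigl[|P_2+a_2'|^2+a_2-b_2\bigr]\le 0
\]
holds, and the second equation splits analogously. To upgrade this to a viscosity subsolution, given $\phi\in C^1(\T^2)$ and a local maximum $(\hat\xi_1,\hat\xi_2)$ of $\varphi-\phi$, the partial restrictions $a_1(\cdot)-[\phi(\cdot,\hat\xi_2)-a_2(\hat\xi_2)]$ and $a_2(\cdot)-[\phi(\hat\xi_1,\cdot)-a_1(\hat\xi_1)]$ each attain a local maximum at the corresponding coordinate, so the 1D viscosity subsolution properties of $(a_1,b_1)$ and $(a_2,b_2)$ apply; adding the two resulting inequalities produces exactly the 2D subsolution inequality at $(\hat\xi_1,\hat\xi_2)$, and the second equation is handled the same way using $(b_1,b_2)$. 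Proposition~\ref{prop:rep-1} then delivers $\ol{H}(P)\le 0$ for $|P|\le\gam$, and combined with the lower bound we conclude $\ol{H}(P)=0$ on this neighborhood.

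\textbf{Main obstacle.} Although both the kinetic term $|P+D(\cdot)|^2$ and the potentials $V_1,V_2$ split cleanly under the ansatz $\varphi=a_1+a_2$, $\psi=b_1+b_2$, viscosity subsolutions are not closed under sums in general. The technical step to be executed carefully is precisely the test-function verification above showing that the sum of two 1D viscosity subsolutions living in orthogonal coordinates actually yields a 2D viscosity subsolution; it is at this step that the product structure of $V_1$ and $V_2$ is genuinely used.
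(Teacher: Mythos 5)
Your proof is correct and follows essentially the same route as the paper's: reduce to two one-dimensional weakly coupled cell problems via Theorem \ref{thm:main7} (here via its generalization Theorem \ref{thm:main7new}, together with the index-swap remark), and recombine the resulting 1D solutions additively under the separable ansatz $\varphi=a_1+a_2$, $\psi=b_1+b_2$. The only cosmetic difference is that you obtain the upper bound $\ol{H}(P)\le 0$ from the subsolution property and Proposition \ref{prop:rep-1} and get the lower bound from Lemma \ref{flat:lem1}, whereas the paper asserts that $(v_1,v_2,0)$ is a full solution of $\EP$ (``we easily get''); your explicit test-function verification that the sum of 1D viscosity subsolutions in orthogonal coordinates is a 2D viscosity subsolution is a welcome justification of a step the paper leaves implicit.
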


\begin{center}
\unitlength 0.1in
\begin{picture}( 18.2000, 18.9000)( 23.7000,-24.9000)
\put(30.4000,-19.4000){\makebox(0,0)[lb]{{\small $\{V_1=0\}$}}}%
%
{\color[named]{Black}{%
\special{pn 8}%
\special{pa 2600 600}%
\special{pa 4190 600}%
\special{pa 4190 2200}%
\special{pa 2600 2200}%
\special{pa 2600 600}%
\special{pa 4190 600}%
\special{fp}%
}}%
%
{\color[named]{Black}{%
\special{pn 13}%
\special{pa 3210 600}%
\special{pa 3210 2200}%
\special{fp}%
}}%
%
{\color[named]{Black}{%
\special{pn 13}%
\special{pa 2600 1430}%
\special{pa 4190 1430}%
\special{fp}%
}}%
\put(31.2000,-23.2000){\makebox(0,0)[lb]{{\small $y_{01}$}}}%
\put(23.7000,-14.7000){\makebox(0,0)[lb]{{\small $y_{02}$}}}%
\put(33.2000,-14.0000){\makebox(0,0)[lb]{{\small $\{V_2=0\}$}}}%
\put(26.0000,-26.2000){\makebox(0,0)[lb]{Fig. 4.5. The figures of $\{V_i=0\}$}}%
\end{picture}%

\end{center}

\begin{proof}
By using Theorem \ref{thm:main7}, for $P=(P_1,P_2)$ with $|P|$ small enough,
there exist two pairs $(\varphi_i,\psi_i)\in C^{0,1}(\T)^2$ for $i=1,2$ such that
$$
\begin{cases}
|P_1+\varphi_1'(\xi_1)|^2-V^1(\xi_1)+\varphi_1(\xi_1)-\psi_1(\xi_1) = 0, \\
|P_1+\psi_1'(\xi_1)|^2+\psi_1(\xi_1)-\varphi_1(\xi_1)= 0
\end{cases}
$$
and
$$
\begin{cases}
|P_2+\varphi_2'(\xi_2)|^2+\varphi_2(\xi_2)-\psi_2(\xi_2) = 0, \\
|P_2+\psi_2'(\xi_2)|^2-V^2(\xi_2)+\psi_2(\xi_2)-\varphi_2(\xi_2)= 0
\end{cases}
$$

Now let $v_1(\xi_1,\xi_2)=\varphi_1(\xi_1)+\varphi_2(\xi_2)$,
$v_2(\xi_1,\xi_2)=\psi_1(\xi_1)+\psi_2(\xi_2)$
for $(\xi_1,\xi_2) \in \T^2$.
For $P=(P_1,P_2)$ with $|P| \le \gam$, we easily get
that $(v_1,v_2,0)$ is a solution of $(\textrm{E}_P)$, which means $\ol{H}(P)=0$.
\end{proof}


\section{Generalization}
In this section we consider weakly coupled systems 
of $m$-equations for $m \ge 2$ 
\[
(u_i^{\ep})_t+H_{i}(\frac{x}{\ep},Du_i^{\ep})
+\frac{1}{\ep}\sum_{j=1}^{m}c_{ij}(u_i^{\ep}-u_j^{\ep})=0 
\ \textrm{in} \ \Q 
\ \textrm{for} \ i=1,\ldots,m, 
\]
with 
\[
u_{i}^{\ep}(x,0)=f_i(x) \ \textrm{on} \ \R^n \ \textrm{for} \ i=1,\ldots,m, 
\]
where 
$c_{ij}$ are given nonnegative
constants which are assumed to 
satisfy 
\begin{equation}\label{ass:c_ij}
\sum_{j=1}^{m}c_{ij}=1 \ \textrm{for all} 
\ i=1,\ldots,m. 
\end{equation}
Set 
\[
K:=
{\small 
\left(
\begin{array}{ccc}
c_{11} &  \cdots & c_{1m} \\
\vdots & \ddots & \vdots\\
c_{m1} &  \cdots & c_{mm} 
\end{array}
\right), 
}
\
\mathbf{u^{\ep}}
:=
{\small
\left(
\begin{array}{c}
u_{1}^{\ep} \\
\vdots \\
u_{m}^{\ep}
\end{array}
\right), 
}
\
\text{and}
\ 
\mathbf{f}
:=
{\small
\left(
\begin{array}{c}
f_{1} \\
\vdots \\
f_{m}
\end{array}
\right). 
}
\]
Then the problem can be written as
\begin{numcases}
{}
\mathbf{u}^{\ep}_t
+
{\small
\left(
\begin{array}{c}
H_{1}(x/\ep,Du_{1}^{\ep}) \\
\vdots \\
H_{m}(x/\ep,Du_{m}^{\ep})
\end{array}
\right) 
}
+\frac{1}{\ep}(I-K)\mathbf{u^{\ep}}
= 0
& in $\Q$, \label{eq:m-system}\\
\mathbf{u^{\ep}}(\cdot,0)=\mathbf{f} 
& 
on $\R^{n}$, 
\nonumber
\end{numcases}
where $I$ is the 
identity matrix of size $m$.
We obtain the following result.
\begin{thm} \label{thm:main-m}
The functions $u^\ep_i$ converge locally uniformly to the same limit $u$
in $\Q$ as $\ep \to 0$ for $i=1,\ldots,m$ and $u$ solves
\[
\begin{cases}
u_t+\ol{H}(Du)=0  &\textrm{in } \Q \\
u(x,0) = \ol{f}(x) &\textrm{on } \R^n,
\end{cases}
\]
where $\ol{H}$ is the associated effective Hamiltonian and 
\[
\ol{f}(x):=\frac{1}{m}\sum_{i=1}^{m}f_i(x). 
\]
\end{thm}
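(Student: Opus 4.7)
The plan is to adapt the three-step strategy used for the $m=2$ case (Lemma \ref{lem:barrier} and Theorem \ref{thm:main1}) to the matrix/vector setting, since the key structural identity $(I-K)\mathbf{1}=0$ survives from \eqref{ass:c_ij}. The three ingredients are: (a) a cell problem with a unique effective Hamiltonian $\ol{H}(P)$; (b) barrier functions built from the linear semigroup generated by $I-K$; (c) the perturbed test function method combined with half-relaxed limits.

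For (a), I would prove the analog of Proposition \ref{prop:cell}: for each $P\in\R^n$ there exists a unique constant $\ol{H}(P)$ such that the system
\[
H_i(\xi,P+Dv_i(\xi,P))+\sum_{j=1}^{m}c_{ij}\bigl(v_i(\xi,P)-v_j(\xi,P)\bigr)=\ol{H}(P)\quad\text{in }\R^n,\ i=1,\ldots,m,
\]
admits a $\T^n$-periodic continuous solution $(v_1,\ldots,v_m)$. This is obtained by the vanishing discount procedure (cf.\ the proof of Lemma \ref{app:lem1}) and the comparison principle for weakly coupled systems (\cite{EL,IK}); uniqueness of $\ol{H}(P)$ uses that adding any constant vector $c\mathbf{1}$ to a solution produces another solution since $(I-K)\mathbf{1}=0$.

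The crucial step is (b). Let $\mathbf{E}(s):=e^{-s(I-K)}$ denote the semigroup on $\R^m$ generated by $-(I-K)$. Because $\mathbf{1}\in\ker(I-K)$ and the remaining spectrum of $I-K$ has positive real part (under the irreducibility assumption implicit in the claim), $\mathbf{E}(s)$ converges as $s\to\infty$ to the projection $\Pi_0$ onto $\mathrm{span}\{\mathbf{1}\}$ with $\Pi_0\mathbf{f}=\ol{f}\,\mathbf{1}$. Defining
\[
\varphi_i^{\pm}(x,t):=\bigl[\mathbf{E}(t/\ep)\mathbf{f}(x)\bigr]_i\pm Ct,\qquad i=1,\ldots,m,
\]
with $C\ge \max_i\max_{(\xi,p)\in\R^n\times \ol{B}(0,r)}|H_i(\xi,p)|$ and $r=\max_i\|Df_i\|_{L^\infty(\R^n)}$, the time derivative yields $-\frac{1}{\ep}[(I-K)\mathbf{E}(t/\ep)\mathbf{f}]_i$, which cancels exactly the coupling term $\frac{1}{\ep}\sum_j c_{ij}(\varphi_i^{\pm}-\varphi_j^{\pm})$. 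Thus
\[
(\varphi_i^{\pm})_t+H_i(\tfrac{x}{\ep},D\varphi_i^{\pm})+\tfrac{1}{\ep}\sum_{j=1}^{m}c_{ij}(\varphi_i^{\pm}-\varphi_j^{\pm})=H_i(\tfrac{x}{\ep},D\varphi_i^{\pm})\pm C,
\]
which is $\le 0$ (resp.\ $\ge 0$) by the choice of $C$ together with the Lipschitz bound on $\mathbf{E}(t/\ep)\mathbf{f}$ inherited from $\mathbf{f}$. Since $\varphi_i^{\pm}(\cdot,0)=f_i$, the comparison principle gives $\varphi_i^{-}\le u_i^\ep\le\varphi_i^{+}$ on $\cQ$ for each $i$, and sending $t/\ep\to\infty$ identifies $\ol{f}$ as the correct effective initial datum.

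For (c), once the barriers are in place, the proof of Theorem \ref{thm:main1} goes through verbatim with the half-relaxed limits taken as $W(x,t):=\limssup_{\ep\to 0}\sup_{1\le i\le m}[u_i^\ep](x,t)$ and $w(x,t):=\limiinf_{\ep\to 0}\inf_{1\le i\le m}[u_i^\ep](x,t)$. The barriers force $W(\cdot,0)=w(\cdot,0)=\ol{f}$. The perturbed test function method, using the corrector $(v_1,\ldots,v_m)$ of the $m$-equation cell problem, shows that $W$ and $w$ are respectively a subsolution and a supersolution of $u_t+\ol{H}(Du)=0$, and the standard comparison principle for the limit equation closes the argument. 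The main obstacle I anticipate is step (b): pinning down that $\Pi_0\mathbf{f}=\ol{f}\,\mathbf{1}$ with $\ol{f}=\frac{1}{m}\sum_i f_i$, which requires the left null-vector of $I-K$ to be uniform. This is a spectral/probabilistic condition on $(c_{ij})$ (beyond \eqref{ass:c_ij}, e.g.\ double stochasticity or a reversibility-type symmetry), and it is the only place where the $m$-equation argument genuinely uses more than the $m=2$ proof; everything else is a notational extension.
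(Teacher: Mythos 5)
Your proof takes essentially the same route as the paper. The paper's barrier
\[
\mathbf{w}^{\pm}(x,t) = \bigl(\ol{f}(x)\pm Ct\bigr)\mathbf{j} + e^{\frac{t}{\ep}(K-I)}\mathbf{h}(x),
\qquad \mathbf{h}:=\mathbf{f}-\ol{f}\,\mathbf{j},
\]
is algebraically the same function as your $\varphi^{\pm} = e^{\frac{t}{\ep}(K-I)}\mathbf{f}\pm Ct\,\mathbf{j}$, because $e^{s(K-I)}\mathbf{j}=\mathbf{j}$ from $(I-K)\mathbf{j}=0$. The sub/supersolution verification (the $(I-K)$-coupling cancels against the time derivative of the semigroup term) and the subsequent perturbed-test-function argument with half-relaxed limits are likewise the same.

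The concern you flag at the end of step (b) is real and deserves to be made explicit. The paper's key decay Lemma asserts $|e^{t(K-I)}\mathbf{h}|\le e^{-\del t}|\mathbf{h}|$ whenever $\mathbf{h}\cdot\mathbf{j}=0$, citing \cite[Lemma 5.2]{IS}. For a row-stochastic, irreducible $K$, $e^{t(K-I)}\mathbf{h}\to(\pi^{T}\mathbf{h})\mathbf{j}$ where $\pi$ is the \emph{left} Perron eigenvector (the stationary distribution); decay on $\{\mathbf{h}:\mathbf{j}\cdot\mathbf{h}=0\}$, and hence the identification $\ol{f}=\frac{1}{m}\sum_i f_i$, holds precisely when $\pi=\mathbf{j}/m$, i.e.\ when $K$ is doubly stochastic (equivalently $\sum_i c_{ij}=1$ for every $j$). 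The paper tacitly works under this stronger assumption; in full generality one should replace the arithmetic mean by $\ol{f}=\sum_i\pi_i f_i$. You spotted this correctly; in a final write-up you should either add the column-stochasticity hypothesis or state $\ol{f}$ in terms of $\pi$, and in either case you can then drop your hedging about ``the only place the argument genuinely uses more than the $m=2$ proof'' and present it as a needed hypothesis.
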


We only present barrier functions which are generalizations of 
the barrier function in case $m=2$ defined by \eqref{func:barrier} 
in Lemma \ref{lem:barrier}. 
Set 
\[
\mathbf{w^{\pm}(x,t)}
:=
(\ol{f}\pm Ct)\mathbf{j}
+
\mathbf{g^{\ep}}(x,t), 
\]
where 
$C$ is a positive constant which will be fixed later, 
$\mathbf{j}:=(1,\ldots,1)^{T}$ and 
\begin{align*}
\mathbf{g^{\ep}}(x,t)
:=
\big[e^{\frac{t}{\ep}(K-I)}\mathbf{h}\big](x), \ 
\mathbf{h}(x):=\mathbf{f}(x)-\ol{f}(x)\mathbf{j}. 
\end{align*}

Since we assume \eqref{ass:c_ij}, we can easily check that 
the Frobenius root of $K$, i.e., the maximum of the eigenvalues 
of $K$, is $1$ and moreover 
$\mathbf{j}$ is an associated eigenvector. 
Moreover by the Perron--Frobenius theorem we have 
\begin{lem}
There exists $\del>0$ such that 
$|e^{t(K-I)}\mathbf{h}|\le e^{-\del t}|\mathbf{h}|$  
provided that $\mathbf{h} \cdot \mathbf{j}=0$. 
\end{lem}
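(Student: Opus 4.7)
My plan is to pass a spectral gap for $K$ (provided by the Perron--Frobenius theorem) to exponential decay of the matrix semigroup $e^{t(K-I)}$, by decomposing $\R^{m}$ into $K$-invariant subspaces and showing that the hypothesis $\mathbf{h}\cdot\mathbf{j}=0$ forces $\mathbf{h}$ to lie in the complement of $\mathrm{span}(\mathbf{j})$, on which $K-I$ is spectrally strictly negative.

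First I would collect the spectral information. By \eqref{ass:c_ij}, $K$ is non-negative and row-stochastic, so $K\mathbf{j}=\mathbf{j}$; Perron--Frobenius then yields that $1$ is the spectral radius of $K$. Under the natural irreducibility of the switching structure, $1$ is a simple eigenvalue and every other $\lam\in\sig(K)$ satisfies $|\lam|<1$, so the spectral gap
\[
\del_0:=1-\max\bigl\{\mathrm{Re}\,\lam:\lam\in\sig(K)\setminus\{1\}\bigr\}
\]
is strictly positive. Next I would split $\R^{m}=\mathrm{span}(\mathbf{j})\oplus W$ into $K$-invariant subspaces, with $W$ the sum of the generalized eigenspaces of $K$ for eigenvalues $\ne 1$. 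Exploiting that $\mathbf{j}$ is also a left eigenvector of $K$ (the symmetric balance of the switching rates makes $K$ doubly stochastic), one has $W=\mathbf{j}^{\perp}$. The condition $\mathbf{h}\cdot\mathbf{j}=0$ then puts $\mathbf{h}\in W$, and $W$-invariance gives $e^{tA}\mathbf{h}\in W$ for all $t\ge 0$, where $A:=K-I$. Since $\sig(A|_{W})\subset\{\mathrm{Re}\,z\le-\del_{0}\}$, the standard Jordan-form bound gives
\[
|e^{tA}\mathbf{h}|\le C(1+t^{m-1})e^{-\del_{0}t}|\mathbf{h}|.
\]
Choosing any $\del\in(0,\del_0)$ absorbs the polynomial factor $(1+t^{m-1})$ and the constant $C$ for $t$ large; a trivial estimate on a compact initial interval, followed by a further small reduction of $\del$, yields the claimed $|e^{tA}\mathbf{h}|\le e^{-\del t}|\mathbf{h}|$.

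The main obstacle is the identification $W=\mathbf{j}^{\perp}$, equivalently $\mathbf{j}^{T}K=\mathbf{j}^{T}$ (double stochasticity of $K$): this is a structural property beyond the bare hypothesis \eqref{ass:c_ij}, and in the paper's setup it reflects the symmetric balance of switching rates among the $m$ states. Without it, one would instead have $e^{tA}\mathbf{h}\to(\pi\cdot\mathbf{h})\mathbf{j}$ with $\pi$ the left Perron eigenvector, and the orthogonality would have to be stated as $\pi\cdot\mathbf{h}=0$ rather than $\mathbf{j}\cdot\mathbf{h}=0$. Once the $K$-invariance of $\mathbf{j}^{\perp}$ is established, everything else is a routine spectral computation.
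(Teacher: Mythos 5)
The paper does not give a proof of this lemma at all: it simply cites \cite[Lemma 5.2]{IS}. So there is no in-paper argument to compare against, and your spectral Perron--Frobenius approach is the natural one.

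Your observation about the need for $K$-invariance of $\mathbf{j}^{\perp}$, equivalently $\mathbf{j}^{T}K=\mathbf{j}^{T}$, is correct, important, and not made explicit in the paper. The standing assumption \eqref{ass:c_ij} only makes $K$ row-stochastic, so $\mathbf{j}$ is a right eigenvector; for $\mathbf{j}^{\perp}$ to be flow-invariant one also needs $\mathbf{j}$ to be a left eigenvector. As you note, without this the limit of $e^{tA}\mathbf{h}$ is $(\pi\cdot\mathbf{h})\mathbf{j}$ with $\pi$ the left Perron vector, and orthogonality would have to be stated as $\pi\cdot\mathbf{h}=0$. This is consistent with the paper's Theorem 5.1, which asserts the \emph{uniform} average $\ol{f}=\tfrac1m\sum f_i$; that conclusion too tacitly requires $\pi=\tfrac1m\mathbf{j}$, i.e.\ $K$ doubly stochastic (and irreducibility for the spectral gap). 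In the $m=2$ case with $c_1=c_2=1$ one has $K=\bigl(\begin{smallmatrix}0&1\\1&0\end{smallmatrix}\bigr)$, which is indeed symmetric, so the hypothesis is satisfied where the paper actually uses it.

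There is, however, a genuine gap in your final absorption step. From the Jordan form you obtain $\|e^{tA}|_{W}\|\le C(1+t^{m-1})e^{-\del_0 t}$ with some $C\ge 1$ that in general exceeds $1$. Such a bound cannot be improved to the constant-free form $\|e^{tA}|_{W}\|\le e^{-\del t}$ merely by shrinking $\del$: at $t=0$ the target bound forces $\|e^{tA}|_{W}\|\le 1$, while the Jordan estimate only gives $\le C$, and ``a trivial estimate on a compact interval'' cannot push the constant below $1$ for a non-normal generator. Two fixes: (a) weaken the conclusion to $|e^{tA}\mathbf{h}|\le C e^{-\del t}|\mathbf{h}|$, which is what the Jordan argument actually yields and which is all the paper uses afterwards (the proof of the barrier Proposition only invokes $|D\mathbf{g}|\le C e^{-\del t/\ep}$); or (b) prove the constant-free version by a dissipation estimate. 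For (b), observe that the symmetric part $S:=\tfrac12\bigl((K-I)+(K-I)^{T}\bigr)=\tfrac12(K+K^{T})-I$ restricted to $\mathbf{j}^{\perp}$ is negative definite: $K+K^{T}$ is a symmetric, irreducible, non-negative matrix with row sums $2$, so its spectral radius is $2$, attained simply at $\mathbf{j}$, and all remaining eigenvalues are $<2$; hence $S|_{\mathbf{j}^{\perp}}\le-\del\,\I$ for some $\del>0$. Then, using the $K$-invariance of $\mathbf{j}^{\perp}$,
\[
\frac{d}{dt}\,|e^{tA}\mathbf{h}|^{2}
=2\,(e^{tA}\mathbf{h})^{T}S\,(e^{tA}\mathbf{h})
\le-2\del\,|e^{tA}\mathbf{h}|^{2},
\]
and Gronwall gives $|e^{tA}\mathbf{h}|\le e^{-\del t}|\mathbf{h}|$ exactly. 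So the idea is right, but the final inequality needs either the constant or the dissipativity argument; the polynomial-versus-exponential heuristic alone does not close the estimate at $t$ near $0$.
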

See \cite[Lemma 5.2]{IS} for a more general result.

\begin{prop}
The functions $\mathbf{w^{\pm}}$ are a subsolution and a supersolution 
of \eqref{eq:m-system} with $\mathbf{w}^{\pm}(\cdot,0)=\mathbf{f}$ 
on $\R^n$, respectively,  if $C>0$ is large enough. 
\end{prop}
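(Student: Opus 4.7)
The plan is to verify the initial condition by direct evaluation of $\mathbf{g}^{\ep}$ at $t=0$, and then to rewrite the equation as a classical differential (in)equality, exploiting an algebraic cancellation between the coupling term and $\partial_t\mathbf{g}^{\ep}$. Since $\mathbf{g}^{\ep}(x,0)=\mathbf{h}(x)=\mathbf{f}(x)-\ol{f}(x)\mathbf{j}$, one immediately has $\mathbf{w}^{\pm}(x,0)=\ol{f}(x)\mathbf{j}+\mathbf{h}(x)=\mathbf{f}(x)$, which disposes of the initial data.

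The key observation for the equation is that $K\mathbf{j}=\mathbf{j}$ by the row-sum condition \eqref{ass:c_ij}, hence $(I-K)\mathbf{j}=\mathbf{0}$. Consequently $(I-K)\mathbf{w}^{\pm}=(I-K)\mathbf{g}^{\ep}$, and a direct computation gives $(I-K)\mathbf{g}^{\ep}=-\ep\,\partial_t\mathbf{g}^{\ep}$. Thus the contribution of $\mathbf{g}^{\ep}$ to $\partial_t\mathbf{w}^{\pm}+(1/\ep)(I-K)\mathbf{w}^{\pm}$ vanishes, leaving the residual $\pm C\mathbf{j}$. Componentwise this reduces the verification to the pointwise scalar inequalities $\pm C+H_i(x/\ep,Dw_i^{\pm})\le 0$ (for $\mathbf{w}^-$) and $\ge 0$ (for $\mathbf{w}^+$), which is exactly the structure already exploited in Lemma~\ref{lem:barrier}.

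The only real obstacle, and the reason the exponential factor $e^{(t/\ep)(K-I)}$ appears in the definition of $\mathbf{g}^{\ep}$, is to keep $Dw_i^{\pm}$ uniformly bounded in $\ep$, $x$, and $t$. Here I would invoke the Perron--Frobenius lemma stated just above: the vector $\mathbf{h}(x)\in\R^m$ satisfies $\mathbf{h}(x)\cdot\mathbf{j}=\sum_{i=1}^m f_i(x)-m\ol{f}(x)=0$ by the very definition of $\ol{f}$, and the same orthogonality therefore holds for each spatial partial $\partial_{x_k}\mathbf{h}(x)\in\R^m$. Applying the lemma column by column then yields
\[
|D\mathbf{g}^{\ep}(x,t)|\le e^{-\del t/\ep}\,\|D\mathbf{h}\|_{L^\infty(\R^n)},
\]
so $|Dw_i^{\pm}(x,t)|\le r$ with $r$ depending only on $\|Df_1\|_{L^\infty(\R^n)},\ldots,\|Df_m\|_{L^\infty(\R^n)}$. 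By the $\T^n$-periodicity of $H_i$, choosing
\[
C\ge \max_{i=1,\ldots,m}\max_{(\xi,p)\in\R^n\times \ol{B}(0,r)}|H_i(\xi,p)|
\]
then yields both the sub- and super-solution inequalities simultaneously, completing the proof.
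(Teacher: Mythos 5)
Your proof is correct and follows essentially the same route as the paper's (very terse) argument: check the initial data directly, use the cancellation $\partial_t \mathbf{g}^{\ep}=\frac{1}{\ep}(K-I)\mathbf{g}^{\ep}$ together with $(I-K)\mathbf{j}=0$ to reduce the system to the scalar inequalities $\mp C + H_i(x/\ep,Dw_i^{\pm})\lessgtr 0$, and invoke the Perron--Frobenius lemma applied to $D\mathbf{h}$ (noting $D\mathbf{h}\cdot\mathbf{j}=0$) to get a uniform gradient bound $|D\mathbf{g}^{\ep}|\le e^{-\delta t/\ep}\|D\mathbf{h}\|_{L^\infty}$ so that $C$ can be chosen independently of $\ep$. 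You have simply made explicit the steps the paper compresses into ``we can check easily.''
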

\begin{proof}
It is easy to check $\mathbf{w^{\pm}}(\cdot,0)=\mathbf{f}$ 
on $\R^n$. 
Note that 
\[
\frac{\pl\mathbf{g^{\ep}}}{\pl t}
=\frac{1}{\ep}(K-I)\mathbf{g^{\ep}}
\ \textrm{and} \ 
|D\mathbf{g}|\le Ce^{-\frac{\del t}{\ep}}. 
\]
Thus, we can check easily that 
$\mathbf{w^{\pm}}$ are a subsolution and a supersolution 
of \eqref{eq:m-system}, respectively,  if $C>0$ is large enough. 
\end{proof}

By a rather standard argument by using the perturbed test functions 
we can get Theorem \ref{thm:main-m} as in the proof of Theorem 
\ref{thm:main1}.


\section{Dirichlet Problems}\label{sec:dirichlet}
In this section  
we consider the asymptotic behavior,
as $\ep$ tends to $0$, of
the viscosity solutions 
$(u^\ep_1,u^\ep_2)$ of 
Dirichlet boundary problems for weakly coupled 
systems of Hamilton--Jacobi equations 
\begin{numcases}
{(\textrm{D}_\ep) \hspace{1cm}}
u^\ep_{1}+H_{1}(\dfrac{x}{\ep},Du^\ep_{1}) +\dfrac{1}{\ep}( u^\ep_{1}-u^\ep_{2}) = 0
& in $\Om$, \nonumber \\
u^\ep_{2}+H_{2}(\dfrac{x}{\ep},Du^\ep_{2}) + \dfrac{1}{\ep}(u^\ep_{2}-u^\ep_{1}) = 0
& in $\Om$, \nonumber \\
u^\ep_{i}(x)=g_{i}(x)
& on $\bO$,  
\nonumber
\end{numcases}
where 
$\Om$ is a bounded domain of $\R^{n}$ {with the Lipschitz boundary},  
the Hamiltonians $H_{i} \in C(\R^n \times \R^n)$ are 
assumed to satisfy (A1)-(A2) and 
$g_{i}\in C(\bO)$ are given functions for $i=1,2$.

Concerning the Dirichlet problem, 
classical works required continuous solutions up to the boundary to satisfy the prescribed data on the entire boundary. This can be achieved for special classes of equations by imposing compatibility conditions on the boundary data or by assuming the existence of appropriate super and subsolutions. 
However, in general, we cannot expect that there exists a (viscosity) solution satisfying the boundary condition in the classical sense. 
After Soner studied the state constraints problems in terms of PDE, the viscosity formulation for Dirichlet conditions was introduced by Barles and Perthame \cite{BP} and Ishii \cite{I}. 
In this paper we deal with solutions satisfying Dirichlet boundary conditions 
in the sense of viscosity solutions.

\begin{thm} \label{thm:dirichlet}
Let $(u^\ep_1, u^\ep_2)$ be the solution of $({\rm D}_{\ep})$. 
Then $u^\ep_i$ converge locally uniformly to the same limit $u$ 
on $\Om$ as $\ep \to 0$ for $i=1,2$ and $u$ solves
\begin{equation}\label{HJ.limit-D}
\begin{cases}
u+\ol{H}(Du)=0 \qquad &\textrm{in } \Om, \\
u = \ol{g} &\textrm{on } \bO, 
\end{cases}
\end{equation}
where $\ol{g}:=\min\{g_1,g_2\}$ on $\bO$. 
\end{thm}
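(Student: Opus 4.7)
The plan is to adapt the strategy of Theorem \ref{thm:main1} to the stationary Dirichlet setting, with the key new work lying in the identification of the effective boundary datum $\ol{g}=\min\{g_1,g_2\}$. First I would establish uniform bounds: since $\Omega$ is bounded and $g_i\in C(\partial\Omega)$, the constant pairs $(\pm M,\pm M)$ with $M$ large enough (depending on $\|g_i\|_\infty$ and $\max_i\|H_i(\cdot,0)\|_\infty$) are a supersolution and a subsolution of $({\rm D}_\ep)$ in the Dirichlet viscosity sense, so the comparison principle for $({\rm D}_\ep)$ from \cite{EL,IK} yields $\|u_i^\ep\|_{L^\infty(\Om)}\le M$ uniformly. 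This allows the half-relaxed limits
\[
W(x):=\limssup_{\ep\to 0}\sup_{i=1,2}[u_i^\ep](x),\qquad w(x):=\limiinf_{\ep\to 0}\inf_{i=1,2}[u_i^\ep](x)
\]
to be defined on $\ol{\Om}$.

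Next, I would show that $W$ and $w$ are respectively a subsolution and a supersolution of $u+\ol{H}(Du)=0$ in $\Om$. This step is essentially identical to the interior argument in the proof of Theorem \ref{thm:main1}: for a test function $\phi\in C^1(\Om)$ that $W-\phi$ touches from above at an interior point, I would use the perturbed test function $\psi_i^{\ep,\al}(x,y)=\phi(x)+\ep v_i(y/\ep)+|x-y|^2/(2\al^2)$ with $(v_1,v_2)$ the corrector from $({\rm E}_P)$, employ the Lipschitz bound on the correctors coming from coercivity of $H_i$, and collapse the $1/\ep$ coupling terms by the supersolution inequality for $(v_1,v_2)$. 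The argument carries over verbatim since no time variable is involved.

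The heart of the proof, and the main obstacle, is verifying the effective Dirichlet condition $W\le\ol{g}$ and $w\ge\ol{g}$ on $\partial\Om$ in the viscosity sense. The emergence of $\min\{g_1,g_2\}$ rather than the average $(g_1+g_2)/2$ of the Cauchy case is because the viscosity Dirichlet condition is one-sided and the value function is an infimum: the fast switching lets trajectories wait at negligible cost for the cheaper exit state. To make this rigorous, I would proceed as follows. For $W\le\ol{g}$ at a boundary point $x_0\in\partial\Om$, fix $i_0$ with $g_{i_0}(x_0)=\ol{g}(x_0)$ and construct a local supersolution of $({\rm D}_\ep)$ of the form $(\psi_1^\ep,\psi_2^\ep)=(g_{i_0}(x_0)+C|x-x_0|+\om(|x-x_0|),g_{i_0}(x_0)+C|x-x_0|+\om(|x-x_0|))$ on a small ball around $x_0$, with $\om$ a modulus matching $g_{i_0}$ near $x_0$ and $C$ chosen large so the PDE supersolution holds (note the coupling terms vanish since $\psi_1^\ep=\psi_2^\ep$). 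On the portion of the ball boundary inside $\Om$ this barrier is majorized by a constant that dominates $u_i^\ep$; on $\partial\Om$ it exceeds $g_i$ for both $i$ in a neighborhood by continuity plus the $C|x-x_0|$ term. Comparison gives $u_i^\ep\le\psi_i^\ep$, and sending $\ep\to 0$ yields $W(x_0)\le g_{i_0}(x_0)=\ol{g}(x_0)$. For $w\ge\ol{g}$, a symmetric construction with subsolutions $(\varphi^\ep,\varphi^\ep)=(\ol{g}(x_0)-C|x-x_0|-\om(|x-x_0|)-K\ep,\ldots)$ works, using that $\ol{g}\le g_i$ pointwise so the PDE subsolution inequality at $\partial\Om$ is satisfied. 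Finally, the comparison principle for \eqref{HJ.limit-D} applied to $W$ and $w$ forces $W=w=u$, which upgrades to local uniform convergence of $u_i^\ep$ to $u$ on $\Om$ by the standard half-relaxed limit argument.
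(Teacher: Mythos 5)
Your interior argument and the identification of the subsolution boundary condition $W\le\ol{g}$ on $\partial\Om$ are sound in spirit (the paper obtains the latter more directly via Lemma~\ref{lem:classic}, which shows any bounded USC subsolution of $({\rm D}_\ep)$ satisfies $u_i^\ep\le g_i$ classically on $\partial\Om$ by a coercivity argument; then $u\le\min\{g_1,g_2\}$ follows on passing to the limit). The paper also replaces your $L^\infty$/half-relaxed-limit step with Lemma~\ref{lem:equilip}: summing the two equations of $({\rm D}_\ep)$ kills the $1/\ep$ coupling and, by coercivity, yields an $\ep$-uniform Lipschitz bound, so one has genuine local uniform convergence of a subsequence. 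Both routes work for the interior and the upper boundary inequality.

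The genuine gap is in your treatment of the supersolution boundary condition. You attempt to prove the pointwise inequality $w\ge\ol{g}$ on $\partial\Om$ via a cone barrier $\varphi^\ep(x)=\ol{g}(x_0)-C|x-x_0|-\om(|x-x_0|)-K\ep$, but this fails on two counts. First, for such a barrier to be a viscosity \emph{subsolution} of $({\rm D}_\ep)$ in $B(x_0,r)\cap\Om$, you need $H_i(x/\ep,D\varphi^\ep)+\varphi^\ep\le 0$; but $|D\varphi^\ep|\approx C$, and by the coercivity hypothesis (A1) this makes $H_i$ \emph{large positive} for large $C$, so the subsolution inequality cannot hold (the very mechanism that makes a large-slope cone a \emph{super}solution for the upper bound works against you here). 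Second, and more fundamentally, the pointwise bound $w\ge\ol{g}$ on $\partial\Om$ is simply too strong a goal: the viscosity Dirichlet formulation permits the solution of \eqref{HJ.limit-D} to drop below $\ol{g}$ at boundary points (there is a boundary layer), and the supersolution boundary condition only asserts that \emph{at boundary points where $u<\ol{g}$ the PDE inequality $u+\ol{H}(Du)\ge 0$ holds}. The paper's proof respects this: it runs the perturbed test function method at a boundary minimum point $x_0$ under the standing assumption $(u-\ol{g})(x_0)<0$, then uses $\ol{g}=\min\{g_1,g_2\}$ to deduce that the approximating points $x^\ep$ (and index $i^\ep$) satisfy $(u^\ep_{i^\ep}-g_{i^\ep})(x^\ep)<0$ for $\ep$ small, so the viscosity supersolution condition for $({\rm D}_\ep)$ at $x^\ep$ reduces to the PDE branch, and the doubling-variables machinery then closes the argument. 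You should discard the lower barrier and instead carry out the perturbed test function argument directly at boundary test points as in the paper, exploiting $(u-\ol{g})(x_0)<0\Rightarrow(u-g_i)(x_0)<0$ for both $i$.
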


\begin{lem}\label{lem:classic} 
If $(u_{1}^{\ep}, u_{2}^{\ep})$ 
is a bounded upper semicontinuous subsolution of $({\rm D}_{\ep})$, 
then $u_{i}^{\ep}(x)\le g_{i}(x)$ 
for all $x\in\bO$ and $i=1,2$. 
\end{lem}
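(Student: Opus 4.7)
The proof is by contradiction, driven by the coercivity assumption (A1) and the Lipschitz regularity of $\bO$. Assume, without loss of generality, that there exist $i=1$ and $x_0 \in \bO$ with $u_1^\ep(x_0) > g_1(x_0)$ (the case $i=2$ is symmetric). By the Barles--Perthame--Ishii formulation of the viscosity Dirichlet condition, for every $\varphi \in C^1(\R^n)$ such that $u_1^\ep - \varphi$ attains a local maximum on $\ol{\Omega}$ at $x_0$, the PDE inequality
\[
u_1^\ep(x_0) + H_1\!\left(\tfrac{x_0}{\ep},\,D\varphi(x_0)\right) + \tfrac{1}{\ep}\bigl(u_1^\ep - u_2^\ep\bigr)(x_0) \le 0
\]
must hold. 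The strategy is to produce a family of admissible test functions $\varphi_\Lambda$ for which the gradient at the (approximate) touching point blows up as $\Lambda \to \infty$. Since $u^\ep_1,u^\ep_2$ are bounded, (A1) will then force the left side of the PDE inequality to $+\infty$, producing the contradiction.

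\textbf{Constructing $\varphi_\Lambda$.} Exploit the Lipschitz regularity of $\bO$: choose coordinates with $x_0 = 0$ so that $\ol{\Omega} \cap B(0,\rho) = \{x_n \ge \phi(x')\}$ for a Lipschitz $\phi$ with $\phi(0) = 0$. By mollifying $\phi$ and adding a small quadratic term, I would construct a function $\sigma \in C^1(\R^n;[0,\infty))$ with $\sigma(0) = 0$, $\sigma(x) > 0$ on $(\ol{\Omega} \cap \ol{B}(0, R))\setminus\{0\}$, and $|D\sigma(0)|$ bounded below by a positive constant $c$. Instead of requiring the touching to occur exactly at $x_0$, set $\varphi_\Lambda := \Lambda\sigma$ and let $x_\Lambda \in \ol{\Omega} \cap \ol{B}(0,R)$ be a maximizer of $u_1^\ep - \varphi_\Lambda$ (this exists by upper semicontinuity on a compact set). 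The chain
\[
u_1^\ep(0) \le u_1^\ep(0) + \Lambda\sigma(x_\Lambda) \le u_1^\ep(x_\Lambda) \le 2\|u_1^\ep\|_\infty + u_1^\ep(0)
\]
combined with positivity of $\sigma$ away from $0$ forces $x_\Lambda \to 0$, and together with upper semicontinuity yields $u_1^\ep(x_\Lambda) \to u_1^\ep(0)$; continuity of $g_1$ then gives $u_1^\ep(x_\Lambda) > g_1(x_\Lambda)$ for $\Lambda$ large. Hence the strict alternative of the viscosity condition (interior, if $x_\Lambda \in \Omega$; boundary otherwise) is forced at $x_\Lambda$ with test function $\Lambda\sigma$. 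Since $|D(\Lambda\sigma)(x_\Lambda)| \ge c\Lambda/2 \to \infty$, (A1) makes $H_1(x_\Lambda/\ep,\,\Lambda D\sigma(x_\Lambda)) \to +\infty$, contradicting the PDE inequality.

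\textbf{Main obstacle.} The principal technical step is the construction of the penalty $\sigma$ compatible with the Lipschitz (rather than $C^1$) structure of $\bO$: one must simultaneously enforce $\sigma \in C^1$, $\sigma=0$ only at $x_0$ on the local piece of $\ol{\Omega}$, and a uniform lower bound on $|D\sigma(x_0)|$. This is a standard but somewhat delicate construction via mollification of the defining Lipschitz graph of $\bO$ near $x_0$, augmented by a small quadratic term to enforce strict positivity off $\{x_0\}$. Once $\sigma$ is in hand, the rest of the argument reduces to a straightforward application of coercivity and boundedness of the subsolution; note that the coupling term $\ep^{-1}(u_1^\ep - u_2^\ep)$ plays no active role since it is bounded (for fixed $\ep$) and is dominated by the blow-up of $H_1$.
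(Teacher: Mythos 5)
Your overall strategy (drive a contradiction from coercivity by forcing the test-function gradient to blow up, after first observing that the coupling term $\ep^{-1}(u_1^\ep - u_2^\ep)$ and $u_1^\ep$ itself stay bounded) is the right skeleton, and it is the same mechanism the paper exploits. However, the central construction in your plan does not go through for general Lipschitz domains, so there is a genuine gap.

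You want a single $C^1$ function $\sigma$ with $\sigma(x_0)=0$, $\sigma>0$ on $(\cO\cap\ol B(x_0,R))\setminus\{x_0\}$, and $D\sigma(x_0)\neq 0$, so that $\varphi_\Lambda=\Lambda\sigma$ produces a touching point $x_\Lambda\to x_0$ with $|D\varphi_\Lambda(x_\Lambda)|\ge c\Lambda/2$. Such a $\sigma$ cannot exist if $\bO$ has a concave corner at $x_0$, which Lipschitz boundaries allow. Take $n=2$, $x_0=0$, and locally $\Omega=\{x_2>-L|x_1|\}$ for some $L>0$. The three directions $(1,-L)$, $(-1,-L)$, $(0,1)$ all point from $0$ into $\cO$, so nonnegativity of $\sigma$ on $\cO$ with $\sigma(0)=0$ forces
\[
D\sigma(0)\cdot(1,-L)\ge 0,\qquad D\sigma(0)\cdot(-1,-L)\ge 0,\qquad D\sigma(0)\cdot(0,1)\ge 0.
\]
Adding the first two gives $D_2\sigma(0)\le 0$, while the third gives $D_2\sigma(0)\ge 0$, hence $D_2\sigma(0)=0$; then the first two force $D_1\sigma(0)=0$ as well. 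So $D\sigma(0)=0$ necessarily, and $|D\varphi_\Lambda(x_\Lambda)|\to 0\cdot\Lambda$, not $\infty$: your blow-up fails exactly at the points where you need it. Mollifying the Lipschitz graph plus a quadratic correction does not escape this; the obstruction is geometric, not regularity.

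The paper avoids the problem by not using a fixed penalty at all: it picks exterior points $x_k\in\R^n\setminus\cO$ with $|x_0-x_k|=1/k^2$ (such points exist near any boundary point of a Lipschitz domain) and tests with $\psi_k(x)=k|x-x_k|$. The maximizer $\xi_k$ of $u_1^\ep-\psi_k$ on $B(x_0,r)\cap\cO$ satisfies $\xi_k\neq x_k$, so $\psi_k$ is $C^1$ near $\xi_k$ with $|D\psi_k(\xi_k)|=k$ \emph{by construction}, irrespective of the cone geometry of $\bO$ at $x_0$. That built-in blow-up is precisely what your fixed $\sigma$ cannot provide. If you want to salvage your approach, you would in effect have to let $\sigma$ depend on $\Lambda$ and recenter its vertex outside $\cO$ — at which point you have rediscovered the paper's cone test functions.
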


\begin{proof}
Fix $x_0\in\bO$. 
Choose a sequence $\{x_k\}_{k\in\N}\subset\R^n\setminus\cO$ 
such that $|x_0-x_k|=1/k^2$. 
Define the functions $\phi_{1}:\cO\to\R$ by 
$\phi_{1}(x):=u_{1}^{\ep}(x)-k|x-x_k|$. 
Let $r>0$ and 
$\xi_k\in B(x_0,r)\cap\cO$ be a maximum point of 
$\phi_{1}$ on $B(x_0,r)\cap\cO$. 
Since $\phi_{1}(\xi_k)\ge \phi_{1}(x_0)$, we have 
$k|\xi_k-x_k|\le u_{1}^{\ep}(\xi_k)-u_{1}^{\ep}(x_0)+k|x_0-x_k|\le C$, 
where $C>0$ is a constant independent of $k$. 
Thus, $\xi_k\to x_0$ as $k\to\infty$. 
Moreover, noting that 
$u_{1}^{\ep}(x_0)\le\liminf_{k\to\infty}(u_{1}^{\ep}(\xi_k)+k|x_0-x_k|)
\le\limsup_{k\to\infty}u_{1}^{\ep}(\xi_k)+\limsup_{k\to\infty}k|x_0-x_k|
\le u_{1}^{\ep}(x_0)$, 
we get $u_{1}^{\ep}(\xi_k)\to u_{1}^{\ep}(x_0)$ as $k\to\infty$. 
By the viscosity property of $u_{1}^{\ep}$, we have 
\begin{align}
&u_{1}^{\ep}(\xi_k)+H_{1}(\frac{\xi_k}{\ep},p_k)
+\frac{1}{\ep}(u_{1}^{\ep}(\xi_k)-u_{2}(\xi_k))
\le0 \ \textrm{or}\label{pf:lem:classic-1}\\
&u_{1}^{\ep}(\xi_k)\le g_{1}(\xi_k), \nonumber
\end{align}
where
$p_k=k(\xi_k-x_k)/|\xi_k-x_k|$. 
Noting that $|p_k|=k$, 
by (A1), 
we see that the left-hand side of \eqref{pf:lem:classic-1} is positive 
for a sufficiently large $k\in\N$ and then 
we must have $u_{1}^{\ep}(\xi_k)\le g_{1}(\xi_k)$. 
Sending $k\to\infty$, 
we get $u_{1}^{\ep}(x_0)\le g_{1}(x_0)$. 
Similarly, we get $u_{2}^{\ep}(x_0)\le g_{2}(x_0)$ on $\bO$.  
\end{proof}

\begin{lem}\label{lem:equilip}
The families $\{u_{i}^{\ep}\}_{\ep>0}$ are equi-Lipschitz continuous in $\Om$ 
for $i=1,2$. 
\end{lem}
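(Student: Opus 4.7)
The proof proceeds in two stages: a uniform $L^\infty$ bound on $u_i^\ep$, then a doubling-variables argument for local Lipschitz continuity. The nonstandard element is that the doubling is performed on the pointwise maximum over the component index $i$, which is precisely what neutralizes the singular $\frac{1}{\ep}$-switching term.

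For the $L^\infty$ bound, set $M := \max\{\|g_1\|_{L^\infty(\bO)}, \|g_2\|_{L^\infty(\bO)}, \max_{i,\xi}|H_i(\xi,0)|\}$. The constant pair $(M,M)$ is a viscosity supersolution of the interior equations of $({\rm D}_\ep)$ (the switching terms cancel and $M + H_i(\cdot,0) \ge 0$) with $M \ge g_i$ on $\bO$; symmetrically $(-M,-M)$ is a subsolution bounded above by $g_i$. Combined with Lemma \ref{lem:classic} for the classical boundary inequality, the comparison principle in \cite{EL,IK} yields $\|u_i^\ep\|_{L^\infty(\Om)} \le M$, uniformly in $\ep$.

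For the Lipschitz estimate on a compact $K \Subset \Om$, I argue by contradiction using the auxiliary function
\[
\Phi(x,y) := \max_{i=1,2}\bigl[u_i^\ep(x) - u_i^\ep(y)\bigr] - \frac{L\,|x-y|^2}{2\alpha^2} - \gamma\bigl(\rho(x)+\rho(y)\bigr),
\]
where $L$ is large, $\alpha,\gamma>0$ are tuning parameters, and $\rho \in C^1(\Om)$ is a positive boundary-penalty blowing up on $\bO$. The penalty $\gamma\rho$ forces the maximum of $\Phi$ to be attained at some interior pair $(x_0,y_0) \in \Om^2$. Let $i_0$ realize the maximum over $i$ at $(x_0,y_0)$ and set $j_0 = 3 - i_0$. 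Maximality of $i_0$ yields
\[
u_{i_0}^\ep(x_0) - u_{j_0}^\ep(x_0) \ge u_{i_0}^\ep(y_0) - u_{j_0}^\ep(y_0),
\]
so the switching term for $u_{i_0}^\ep$ is larger at $x_0$ than at $y_0$. From the inequalities $\Phi(x,y_0) \le \Phi(x_0,y_0)$ and $\Phi(x_0,y) \le \Phi(x_0,y_0)$ one extracts test functions rendering $u_{i_0}^\ep$ a subsolution at $x_0$ and a supersolution at $y_0$, with conjugate momenta $p_x, p_y = L(x_0-y_0)/\alpha^2 \pm \gamma D\rho$. Subtracting these two inequalities, the singular $\frac{1}{\ep}$-contribution is nonnegative by the display above and drops out, leaving
\[
\bigl(u_{i_0}^\ep(x_0) - u_{i_0}^\ep(y_0)\bigr) + H_{i_0}(x_0/\ep, p_x) - H_{i_0}(y_0/\ep, p_y) \le 0.
\]
From $\Phi(x_0,y_0) \ge \Phi(y_0,y_0)$ one bounds $L|x_0-y_0|^2/(2\alpha^2)$ from above by the Hamiltonian difference; coercivity (A1) combined with the $L^\infty$ bound and the equation controls $|p_x|,|p_y|$ a priori, and the uniform continuity of $H_{i_0}$ in $\xi$ on bounded $p$-sets (from periodicity (A2)) supplies the remaining modulus. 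A suitable calibration of $L,\alpha,\gamma$ against $\ep$ produces a contradiction for $L$ large; sending $\gamma \to 0$ and then $\alpha \to 0$ concludes.

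\textbf{Main obstacle.} The singular coupling $\frac{1}{\ep}(u_1^\ep - u_2^\ep)$ admits no uniform-in-$\ep$ bound and would wreck a naive doubling argument. The crucial insight is doubling with $\max_{i}$ in $\Phi$: at the extremum this endows the switching contribution with a definite nonnegative sign and removes it from the subtracted inequality. A secondary technical difficulty is the calibration described above, in which one must prevent the modulus of continuity of $H_{i_0}(\cdot,p)$ in $\xi$ (which a priori depends on $|p|$) from overwhelming the quadratic term $L|x_0-y_0|^2/\alpha^2$; this is where coercivity is invoked to pin down the gradients independently of $\ep$.
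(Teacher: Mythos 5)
Your $L^\infty$ bound coincides with the paper's, but your route to the Lipschitz estimate is very different from the paper's and contains a genuine gap.

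The paper's argument is strikingly short and avoids doubling variables entirely: since each $u_i^\ep$ is Lipschitz for fixed $\ep$ and hence differentiable a.e., one may add the two equations of $({\rm D}_\ep)$ pointwise a.e.; the singular terms $\frac{1}{\ep}(u_1^\ep-u_2^\ep)$ and $\frac{1}{\ep}(u_2^\ep-u_1^\ep)$ cancel \emph{identically}, leaving $u_1^\ep+u_2^\ep+H_1(x/\ep,Du_1^\ep)+H_2(x/\ep,Du_2^\ep)=0$ a.e. Since each $H_i$ is bounded below (by coercivity and periodicity), the uniform $L^\infty$ bound forces each $H_i(x/\ep,Du_i^\ep)$ to be bounded above by a constant independent of $\ep$, and coercivity (A1) then gives $|Du_i^\ep|\le M'$ uniformly. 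No doubling, no test functions, no calibration of parameters against $\ep$.

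The gap in your proposal is in the sentence ``coercivity (A1) combined with the $L^\infty$ bound and the equation controls $|p_x|,|p_y|$ a priori.'' To read a gradient bound off the coercivity of $H_{i_0}$, you need a one-sided upper bound on $H_{i_0}(x_0/\ep,p_x)$ (and likewise on $H_{i_0}(y_0/\ep,p_y)$). The subsolution inequality at $x_0$ reads
\[
u_{i_0}^\ep(x_0)+H_{i_0}\bigl(\tfrac{x_0}{\ep},p_x\bigr)+\tfrac{1}{\ep}\bigl(u_{i_0}^\ep(x_0)-u_{j_0}^\ep(x_0)\bigr)\le 0,
\]
but the maximality of $i_0$ in $\max_i[u_i^\ep(x)-u_i^\ep(y)]$ tells you only that the \emph{difference} $(u_{i_0}^\ep-u_{j_0}^\ep)(x_0)-(u_{i_0}^\ep-u_{j_0}^\ep)(y_0)$ is nonnegative; it says nothing about the sign of $(u_{i_0}^\ep-u_{j_0}^\ep)(x_0)$ itself, which carries the $1/\ep$ weight. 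Without that sign, the singular term can be arbitrarily large and negative, and coercivity gives you no bound on $|p_x|$. The same obstruction occurs at $y_0$. In short, the $\max_i$ structure neutralizes the $\frac{1}{\ep}$-contribution in the \emph{difference} of the two inequalities, but it does not provide the separate a priori gradient bounds that your modulus-of-continuity estimate requires. This is precisely why the paper sums the two equations instead of doubling variables: summation kills the singular term in a way that controls each equation individually, and then coercivity applies cleanly.
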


\begin{proof}
Set $M:=\max_{i=1,2} (\| H_{i}(\cdot,0) \|_{L^\infty(\R^n)}+\|g_i\|_{L^\infty(\partial \Omega)})$. 
Then $(-M,-M)$ and $(M,M)$ are a subsolution and a supersolution 
of $\D$, respectively. 
By the comparison principle for $\D$ we have $|u_{i}^{\ep}|\le M$. 
Adding two equations in $\D$ we get 
\[
u_1^{\ep}+u_2^{\ep}
+H_1(\frac{x}{\ep}, Du_1^{\ep})
+H_2(\frac{x}{\ep}, Du_2^{\ep})=0
\]
for almost every $x\in\Om$, which implies that 
$|Du_i^{\ep}|\le M^{'}$ in the sense of viscosity solutions 
for some $M^{'}>0$, which is independent of $\ep$. 
\end{proof}

\begin{proof}[Proof of Theorem {\rm \ref{thm:main1}}]
By Lemma \ref{lem:equilip}
we can extract a subsequence
$\{\ep_j\}$ converging to $0$ so that
$u_{i}^{\ep_j}$ 
 converges locally uniformly to $u_{i}\in C(\cO)$ 
for $i=1,2$.
By usual observations, we get that $u_{1}=u_{2}=:u$. 
Since \eqref{HJ.limit-D} has a unique solution,
it is enough for us to prove that 
$u$ is a solution of \eqref{HJ.limit-D}.

We only prove that $u$ is a supersolution of \eqref{HJ.limit-D}, 
since in view of Lemma \ref{lem:classic} we can easily see 
that $u$ is a subsolution of \eqref{HJ.limit-D}.

Let $\phi\in C^{1}(\cO)$ be a test function such that 
$u-\phi$ takes a strict minimum at $x_{0}\in\cO$. 
We only consider the case where $x_{0}\in\bO$, 
since we can prove by a similar way to the proof of 
Theorem \ref{thm:main1}
in the case where 
$x_{0}\in\Om$. 
It is enough for us to prove that
$u(x_0)+\ol{H}(D\phi(x_0))\ge0$
provided that
$(u-\ol{g})(x_0)<0$.

Let $(v_1, v_2)$ be a solution of $(\textrm{E}_P)$ with $P:=D\phi(x_{0})$. 
We consider 
\[
m^{\ep}:=\min_{i\in\{1,2\}}
\min_{x\in\cO}\bigl(u^{\ep}_{i}(x)-\phi(x)-\ep v_{i}(\frac{x}{\ep})\bigr). 
\]
Pick $i^{\ep}\in\{1,2\}$ and $x^{\ep}\in\cO$ so that 
$m^{\ep}=u^{\ep}_{i^{\ep}}(x^{\ep})-\phi(x^{\ep})-
\ep v_{i^{\ep}}(x^{\ep}/\ep)$. 
Also choose $j^{\ep}\in\{1,2\}$ such that 
$\{i^{\ep},j^{\ep}\}=\{1,2\}$. 
We only consider the case where $x^{\ep}\in\bO$ again.  
Since $u^\ep_{i^{\ep}}$ converges to $u$ locally uniformly on 
$\cO$, $\ep v_{i^\ep}(\cdot/\ep)$ converges to $0$ uniformly on $\cO$ 
as $\ep\to0$ and $u-\phi$ takes a strict maximum at $x_0$, 
we see that $x^{\ep}\to x_0$ as $\ep\to0$. 
Thus, if $\ep$ is small enough, then we may assume that 
$(u_{i^\ep}^\ep-g_{i^{\ep}})(x^{\ep})<0$.

For $\al>0$ we define the function $\Phi_{\al}:\cO\times\R^n\to\R$ by 
\[
\Phi_{\al}(x,y):=
u_{{i}^{\ep}}^{\ep}(x)-\phi(x)-\ep v_{i^{\ep}}\bigl(\frac{y}{\ep}\bigr)
+\frac{1}{2\al^{2}}|x-y|^{2}+\dfrac{1}{2}|x-x^{\ep}|^2. 
\]
Let $\Phi_{\al}$ achieve its minimum over $\cO\times\R^n$ at 
some $(x_{\al}^{\ep}, y_{\al}^{\ep})$. 
Since we may assume by taking a subsequence if necessary that 
$x_{\al}^{\ep}\to x^{\ep}$ 
as $\al\to0$, we have
\[
(u_{i^\ep}-g_{i^{\ep}})(x_{\al}^{\ep})<0 \ 
\text{for small} \ \al>0. 
\]
Therefore, by the definition of viscosity solutions, we have 
\[
u_{i^{\ep}}
+H_{i^{\ep}}(\frac{x_{\al}^{\ep}}{\ep},D\phi(x_{\al}^{\ep})-p_{\al}^{\ep}
-(x_{\al}^{\ep}-x^{\ep}))
+\frac{1}{\ep}(u^{\ep}_{i^{\ep}}-u^{\ep}_{j^{\ep}})(x_{\al}^{\ep})
\ge0,
\]
where $p_{\al}^{\ep}:=(x_{\al}^{\ep}-y_{\al}^{\ep})/\al^2$. 
Also, we have 
\[
H_{i}(\frac{y_{\al}^{\ep}}{\ep},P-p_{\al}^{\ep})
+(v_{i^{\ep}}-v_{j^{\ep}})(\frac{y_{\al}^{\ep}}{\ep})\le\ol{H}(P),  
\]
since $(v_1, v_2)$ is a solution of $(\textrm{E}_P)$.

A priori Lipschitz estimate implies 
$|p_{\al}^{\ep}|\le C$ 
for some $C>0$ which is independent of $\al$ and $\ep$.
Without loss of generality, we may assume that 
$p_{\al}^{\ep}\to p^{\ep}$ 
by taking a subsequence $\{ \al_j\}$ converging to $0$ if necessary. 
Send $\al\to0$ in 
the above inequalities to obtain
\begin{align*}
&u^{\ep}_{i^{\ep}}(x^{\ep})
+H_{i^{\ep}}(\frac{x^{\ep}}{\ep},D\phi(x^{\ep})-p^{\ep})
+\frac{1}{\ep}(u^{\ep}_{i^{\ep}}(x^{\ep})
-u^{\ep}_{j^{\ep}}(x^{\ep}))
\ge0, \\ 
&H_{i^{\ep}}(\frac{x^{\ep}}{\ep}, P-p^{\ep})
+v_{i^{\ep}}(\frac{x^{\ep}}{\ep})-v_{j^{\ep}}(\frac{x^{\ep}}{\ep})
\le\ol{H}(P).  
\end{align*}
Noting that 
$u^{\ep}_{i^{\ep}}(x^{\ep})-\phi(x^{\ep})-\ep v_{i^{\ep}}(x^{\ep}/\ep)
\le u^{\ep}_{j^{\ep}}(x^{\ep})-\phi(x^{\ep})-\ep v_{j^{\ep}}(x^{\ep}/\ep)$, 
we get that
\[
u^{\ep}_{i^{\ep}}(x^{\ep})+\ol{H}(P)
\ge 
H_{i^{\ep}}(\frac{x^{\ep}}{\ep},D\phi(x^{\ep})-p^{\ep})
-
H_{i^{\ep}}(\frac{x^{\ep}}{\ep}, P-p^{\ep})
\ge 
-\sig(|D\phi(x^{\ep})-P|),
\]
for some modulus $\sig$.
Sending $\ep\to0$ yields the conclusion. 
\end{proof}

In order to explain the relation between (D$_{\ep}$) and 
the exit-time problem in the optimal control theory, 
we assume that the Hamiltonians $H_i$ are convex in the $p$-variable 
henceforth. 
We next define the associated value functions, which
give us an intuition about the effective boundary datum
$\ol{g}$ in Theorem \ref{thm:dirichlet}.

For $\ep>0$ we define the functions $u_i^{\ep}: \cO\to\R$ by 
\begin{equation}\label{def:value-D}
u_{i}^{\ep}(x):=
\inf\Big\{\E_{i}\Big(\int_{0}^{\tau}e^{-s}L_{\nu^{\ep}(s)}(\frac{\eta(s)}{\ep},-\dot{\eta}(s))\,ds
+e^{-\tau}g_{\nu^{\ep}(\tau)}(\eta(\tau)) 
\Big)\Big\}, 
\end{equation}
where the infimum is taken over $\eta\in\AC([0,\infty),\cO)$ such that 
$\eta(0)=x$ and $\tau\in[0,\infty]$ such that $\eta(\tau)\in\bO$ 
and if $\tau=\infty$, then we set $e^{-\infty}:=0$. 
Here $\E_{i}$ denotes the expectation of 
a process with $\nu^{\ep}(0)=i$, where 
$\nu^{\ep}$  is a $\{1,2\}$-valued continuous-time Markov chain 
given by \eqref{markov}.

\begin{thm}\label{thm:sol-D}
{
Assume that the functions $u_i^{\ep}$ given by \eqref{def:value-D} 
are continuous on $\cO$. }
Then the pair $(u_1^{\ep},u_2^{\ep})$ is a solution of $({\rm D}_{\ep})$. 
\end{thm}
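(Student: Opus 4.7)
The plan is to verify the viscosity subsolution and supersolution properties via the dynamic programming principle (DPP), adapted to the switching process $\nu^\ep$.

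\textbf{Step 1 (DPP).} Writing $u^\ep(x,i) := u_i^\ep(x)$, I first prove that for every $x \in \cO$, $i \in \{1,2\}$, and every stopping time $\theta$ with $0 \le \theta \le \tau$,
\[
u_i^\ep(x) = \inf_\eta \E_i\Big[\int_0^\theta e^{-s} L_{\nu^\ep(s)}\big(\tfrac{\eta(s)}{\ep}, -\dot\eta(s)\big)\,ds + e^{-\theta} u^\ep\big(\eta(\theta), \nu^\ep(\theta)\big)\Big].
\]
This uses the strong Markov property of $\nu^\ep$ together with the usual concatenation-of-admissible-controls argument, exactly parallel to the Cauchy version treated in the Appendix.

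\textbf{Step 2 (Boundary condition in the viscosity sense).} Taking the trivial strategy $\tau \equiv 0$ in \eqref{def:value-D} yields $u_i^\ep \le g_i$ on $\bO$, which already gives the Ishii--Barles--Perthame subsolution boundary inequality at every point of $\bO$. For the supersolution side at a boundary point $x_0$, I only need to treat the case $u_i^\ep(x_0) < g_i(x_0)$; continuity of $u_i^\ep$ and $g_i$ and Lipschitz regularity of $\bO$ then ensure that admissible trajectories issued from $x_0$ can be kept in $\cO$ for a short deterministic time, reducing this case to the interior analysis in Step 3.

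\textbf{Step 3 (PDE via DPP).} Let $\phi \in C^1(\cO)$ and suppose $u^\ep(\cdot, i) - \phi$ attains a strict local minimum at an interior point $x_0 \in \Omega$. For small $\theta > 0$ and arbitrary $q \in \R^n$, apply the DPP with the constant-velocity control $\eta(s) = x_0 - qs$ and substitute $\phi$ for $u^\ep$ at the terminal time, then use Dynkin's formula for the piecewise-deterministic Markov process $(\eta, \nu^\ep)$ (precisely the Ito computation carried out heuristically in Section 1). Dividing by $\theta$, sending $\theta \to 0^+$, and finally taking $\sup_{q \in \R^n}$ together with the identity $H_i(\xi, p) = \sup_q(p \cdot q - L_i(\xi, q))$ yields
\[
u_i^\ep(x_0) + H_i\big(\tfrac{x_0}{\ep}, D\phi(x_0)\big) + \tfrac{1}{\ep}(u_i^\ep - u_j^\ep)(x_0) \ge 0.
\]
The subsolution inequality at a strict interior local maximum is obtained by the mirror argument, choosing controls that are near-optimal in the DPP for $u_i^\ep(x_0)$ up to order $o(\theta)$ and applying the same Ito decomposition.

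The main obstacle will be Step 1: a DPP with a continuous-time Markov chain and merely Lebesgue-measurable controls requires a careful specification of the filtration generated by $\nu^\ep$ and of the admissible control class so that the concatenation of an $\ep_0$-optimal control on $[0,\theta]$ with an $\ep_0$-optimal control after $\theta$ is itself admissible and measurable. Once the DPP is in place with the right measurability, Step 3 is the standard dual-formula computation and Step 2 is a soft consequence of continuity, so the bulk of the work is concentrated in the probabilistic Step 1.
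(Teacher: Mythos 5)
Your overall plan matches the paper's strategy: establish a dynamic programming principle for the discounted exit-time value function, dispose of the boundary condition by reducing to the case $u_i^\ep(x_0)<g_i(x_0)$, and then derive the interior PDE from the DPP via the Legendre transform, exactly as the paper does (it states the Dirichlet DPP and defers the interior verification to the Cauchy verification theorem).

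The genuine gap is in Step 3, where you have the two viscosity inequalities assigned to the wrong control strategies. The DPP is an \emph{infimum}, so an arbitrary constant-velocity control $\eta(s)=x_0-qs$ only produces the one-sided bound $u_i^\ep(x_0)\le \E_i\bigl[\int_0^\theta e^{-s}L_{\nu(s)}\,ds+e^{-\theta}u^\ep_{\nu(\theta)}(\eta(\theta))\bigr]$. To make this bound useful you must bound the terminal term $u^\ep_{\nu(\theta)}(\eta(\theta))$ \emph{from above} by the test function, which is possible precisely at a local \emph{maximum} of $u_i^\ep-\phi$; dividing by $\theta$, letting $\theta\to0^+$ and then optimising over $q$ via $H_i=\sup_q(p\cdot q-L_i)$ produces the \emph{subsolution} inequality $u_i^\ep+H_i+\tfrac1\ep(u_i^\ep-u_j^\ep)\le0$, not the supersolution one. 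At a local minimum of $u_i^\ep-\phi$ you have $u_i^\ep\ge\phi+c$ nearby, so replacing $u^\ep_{\nu(\theta)}$ by $\phi$ lower-bounds the right-hand side and does not chain with $u_i^\ep(x_0)\le\cdots$; you instead need the reverse DPP inequality $u_i^\ep(x_0)+\delta>\E_i[\cdots]$, which holds only for a near-$\delta$-optimal control, and then the lower bound on the terminal term closes the chain and gives the supersolution inequality. So ``arbitrary control $\Rightarrow$ supersolution, near-optimal $\Rightarrow$ subsolution'' must be swapped; as written the inequalities in Step 3 do not connect. A secondary imprecision is in Step 2: the relevant fact is not that some admissible trajectory can stay in $\cO$ for a while, but that any near-optimal exit time $\tau_h$ must be strictly positive for $h$ small, since exiting instantly costs $g_i(x_0)$, which strictly exceeds $u_i^\ep(x_0)+\delta$ when $u_i^\ep(x_0)<g_i(x_0)$; this is what the paper actually invokes to reduce the boundary supersolution case to the interior computation.
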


The proof of Theorem \ref{thm:sol-D} is given in Appendix.  
See \cite{BP,I} for single equations. 
The value functions defined by \eqref{def:value-D} give us 
an intuitive explanation of the reason why the boundary datum $\ol{g}$ 
of the limit solution $u$ is the minimum of 
$g_{i}$ for $i=1,2$.
If we send $\ep$ to $0$, then the switching rate becomes very fast
but it does not really affect the exit time as we can choose to stay in 
{$\cO$}
as long as we like.
And hence, we can control the exit state in such a way that the exit cost 
is the minimum of two given exit costs $g_i$.
On the other hand, when we consider the value function \eqref{def:value} 
associated with the initial value problem, 
we cannot control the terminal state and also the timing of jumps, 
which are only determined by a probabilistic way given by \eqref{markov}. 
This is the main difference between Dirichlet problems 
and initial value problems and the reason why  
the effective Dirichlet boundary value and the effective initial value 
are different.

\section{Appendix}

We now prove Theorems \ref{thm:sol}, and \ref{thm:sol-D} by basically using 
the dynamic programming principles, which are pretty standard in the theory of viscosity solutions.
{
Throughout this section we always assume in addition to (A1), (A2) 
that $p\mapsto H_{i}(\xi,p)$ 
are convex for $i=1,2$. 
}

\begin{thm}[Verification Theorem]\label{thm:sol}
{
Assume that the functions $u_i^{\ep}$ given by \eqref{def:value} 
are continuous on $\cQ$. }
Then the pair  $(u_1^{\ep},u_2^{\ep})$ is a solution of $({\rm C}_\ep)$. 
\end{thm}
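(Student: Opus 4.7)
The plan is to derive the viscosity sub- and supersolution properties of $(u_1^\ep, u_2^\ep)$ from the dynamic programming principle (DPP). The key probabilistic ingredients are that, under $\E_i$, the first jump time $\tau_1$ of $\nu^\ep$ is exponentially distributed with rate $c_i/\ep$ and that $\nu^\ep$ enjoys the strong Markov property. First I would establish the DPP: for every $h \in [0,t]$,
\begin{equation*}
u_i^\ep(x,t) = \inf_\eta \E_i\Big[\int_0^h L_{\nu^\ep(s)}(\eta(s)/\ep, -\dot\eta(s))\,ds + u_{\nu^\ep(h)}^\ep(\eta(h), t-h)\Big],
\end{equation*}
where the infimum is over $\eta \in \AC([0,h],\R^n)$ with $\eta(0)=x$. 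This follows by the standard concatenation argument: any admissible pair on $[0,t]$ decomposes as a pair on $[0,h]$ followed by an admissible continuation starting from $(\eta(h),\nu^\ep(h))$, and conversely concatenating an arbitrary path on $[0,h]$ with a near-optimal continuation gives the opposite inequality.

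For the subsolution property, fix $\phi \in C^1(\Q)$ with $u_i^\ep - \phi$ attaining a local maximum at $(x_0,t_0)$, $t_0>0$, and $u_i^\ep(x_0,t_0)=\phi(x_0,t_0)$. For arbitrary $q\in\R^n$ and small $h>0$, use the constant-velocity control $\eta(s)=x_0 - sq$ in the DPP and condition on whether $\tau_1 \le h$: since the jump probability is $c_i h/\ep + o(h)$ and the chain switches to the other index $j\neq i$ upon jumping, one obtains
\begin{equation*}
\phi(x_0,t_0) \le \Big(1 - \frac{c_i h}{\ep}\Big)\big(hL_i(x_0/\ep, q) + u_i^\ep(x_0 - hq, t_0 - h)\big) + \frac{c_i h}{\ep}\, u_j^\ep(x_0,t_0) + o(h).
\end{equation*}
Bounding $u_i^\ep \le \phi$ near $(x_0,t_0)$, Taylor-expanding $\phi$, dividing by $h$ and letting $h \to 0$ yield
\begin{equation*}
\phi_t(x_0,t_0) + D\phi(x_0,t_0)\cdot q - L_i(x_0/\ep, q) + \frac{c_i}{\ep}(u_i^\ep - u_j^\ep)(x_0,t_0) \le 0.
\end{equation*}
Taking the supremum over $q\in\R^n$ and using $H_i(\xi,p)=\sup_q[p\cdot q - L_i(\xi,q)]$ delivers the $i$-th subsolution inequality of $({\rm C}_\ep)$. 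The supersolution property is obtained symmetrically by choosing an $o(h)$-optimal control in the DPP and reversing the inequalities, while the initial condition $u_i^\ep(x,0)=f_i(x)$ is immediate from \eqref{def:value} by taking $t=0$.

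The main technical obstacle is the rigorous justification of the DPP for the piecewise-deterministic Markov process $(\eta,\nu^\ep)$ together with a uniform control of the error terms as $h\to 0$ with $\ep$ fixed. In particular, one must show that the contribution of the event $\{\tau_1 \le h\}$ concentrates near $(\eta(\tau_1),t_0-\tau_1)\approx (x_0,t_0)$ so that, using the assumed continuity of $u_j^\ep$, it collapses to $\frac{c_i h}{\ep}u_j^\ep(x_0,t_0)+o(h)$; a symmetric bound is needed for the near-optimal continuation appearing in the supersolution argument. Once these estimates are in place, the remainder of the argument is the standard Fleming--Soner type verification adapted to the weakly coupled structure, and the $\ep$-scaling introduces no further difficulty.
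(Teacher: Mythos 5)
Your proposal is correct and follows essentially the same route as the paper: establish the dynamic programming principle, then derive the sub- and supersolution inequalities by conditioning on whether the Markov chain has jumped by time $h$ (probability $c_i h/\ep + o(h)$), Taylor-expanding the test function, and invoking Fenchel--Legendre duality $H_i = L_i^*$ after taking a supremum over velocities (for the subsolution) or choosing an $o(h)$-optimal trajectory, e.g.\ with error $h^2$ (for the supersolution). The only cosmetic differences are that the paper normalizes $\ep=1$ and phrases the conditioning in terms of the terminal state $\nu(h)$ rather than the first jump time $\tau_1$, which are equivalent up to the $O(h^2)$ probability of two or more jumps.
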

Let $\ep=1$ for simplicity in what follows. 
By abuse of notations we write $(u_1,u_2)$ for $(u_1^1, u_2^1)$ and $\nu$ for $\nu^1$.

\begin{prop}[Dynamic Programming Principle]\label{prop:dpp}
For any $x\in\R^n$, $0\le h\le t$ and $i=1,2$ we have 
\begin{equation}\label{dpp}
u_{i}(x,t)=
\inf\Big\{\E_{i}\Big(\int_{0}^{h}L_{\nu(s)}(\eta(s),-\dot{\eta}(s))\,ds
+u_{\nu(h)}(\eta(h),t-h)\Big)\Big\}, 
\end{equation}
where 
the infimum is taken over $\eta\in\AC([0,h],\R^n)$ with $\eta(0)=x$. 
\end{prop}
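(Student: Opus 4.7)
The plan is to establish the two inequalities of \eqref{dpp} separately, using the (strong) Markov property of the switching process $\nu$ together with a measurable selection.

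For the direction ``$\ge$'', I would pick, for given $\delta>0$, a $\delta$-optimal path $\eta\in\AC([0,t],\R^n)$ with $\eta(0)=x$ for the definition \eqref{def:value} of $u_i(x,t)$. Conditioning on $\mathcal{F}_h:=\sigma(\nu(s):0\le s\le h)$ and splitting the cost at $s=h$, the Markov property of $\nu$ identifies the $\mathcal{F}_h$-conditional expectation of the tail cost
$$
\int_h^{t} L_{\nu(s)}(\eta(s),-\dot\eta(s))\,ds + f_{\nu(t)}(\eta(t))
$$
with the expected cost functional evaluated along the shifted path $s\mapsto\eta(s+h)$, $s\in[0,t-h]$, under a chain restarted from the (random) state $\nu(h)$. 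Since $u_{\nu(h)}(\eta(h),t-h)$ is an infimum over such paths, this conditional expectation is at least $u_{\nu(h)}(\eta(h),t-h)$. Adding the $[0,h]$-piece and taking expectation, $u_i(x,t)+\delta$ dominates the right-hand side of \eqref{dpp}; letting $\delta\to 0$ finishes this direction.

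For ``$\le$'', I would fix any $\eta\in\AC([0,h],\R^n)$ with $\eta(0)=x$ and any $\delta>0$, and construct by measurable selection an extension $\tilde\eta$ on $[h,t]$ with $\tilde\eta(h)=\eta(h)$, progressively measurable with respect to the post-$h$ history of $\nu$, whose $\mathcal{F}_h$-conditional expected tail cost lies within $\delta$ of $u_{\nu(h)}(\eta(h),t-h)$. Concatenating $\eta$ and $\tilde\eta$ yields an admissible competitor for $u_i(x,t)$, and the Markov property then gives
$$
u_i(x,t)\le \E_i\Big[\int_0^h L_{\nu(s)}(\eta(s),-\dot\eta(s))\,ds + u_{\nu(h)}(\eta(h),t-h)\Big]+\delta.
$$
Taking the infimum over $\eta$ and sending $\delta\to 0$ completes the proof.

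The main obstacle I anticipate is the measurable selection step in the ``$\le$'' direction: because $\nu$ is a piecewise-constant jump process with random switching times, the post-$h$ randomness interacts nontrivially with the terminal position $\eta(h)$ of the pre-$h$ control, and the $\delta$-optimal tail extension must be chosen in a jointly measurable manner with respect to $(\nu(h),\eta(h))$ and the post-$h$ realization of the chain. This is handled by combining the continuity of $u_i$ (assumed in Theorem \ref{thm:sol}) with the coercivity (A1), which confines near-optimal velocities to a uniformly bounded set, followed by a standard Borel selection theorem as in Fleming--Soner. The remaining computations are routine conditioning and Fubini arguments.
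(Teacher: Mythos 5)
Your proposal takes essentially the same route as the paper: split the cost at time $h$, use the Markov/memoryless property of $\nu$ to recognize the conditional tail cost as a cost functional for the value problem starting from $(\eta(h),\nu(h),t-h)$, and bound it accordingly in each direction. Where you differ is in being explicit about the adaptedness and the measurable-selection step in the ``$\le$'' direction, which the paper treats quite informally: the paper takes a \emph{deterministic} tail trajectory $\delta_2\in\AC([0,t-h],\R^n)$ and ``takes the infimum on all admissible $\delta_2$'' to pass from $\E_i[\text{head}+\text{tail}(\delta_2)]\ge u_i(x,t)$ to $\E_i[\text{head}+u_{\nu(h)}(\delta_1(h),t-h)]\ge u_i(x,t)$, but the exchange of $\inf$ and $\E_i$ needed for this step only goes the wrong way ($\inf_{\delta_2}\E_i[\text{tail}]\ge\E_i[\inf_{\delta_2}\E_{\nu(h)}[\text{tail}]]$) unless the tail control is allowed to depend on the observed state $\nu(h)$. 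Your formulation makes this dependence (and the corresponding interpretation of \eqref{def:value} as an infimum over controls adapted to the filtration of $\nu$) explicit, which is the careful way to do it. One small point: you anticipate needing a Borel selection theorem, but since $\nu(h)$ takes only finitely many values $\{1,2\}$ and $\eta(h)$ is a single point once $\eta$ is fixed, the measurable selection reduces to picking two near-optimal tails $\delta_2^1,\delta_2^2$ indexed by $j\in\{1,2\}$, so no abstract selection machinery is required; the continuity of $u_i$ and coercivity hypotheses you cite are not essential for this step. Overall your proof is correct and, if anything, closes a small gap that the paper leaves implicit.
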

\begin{proof}
We denote by $v_i(x,t;h)$ the right-hand side of \eqref{dpp}. 
Let $\eta$ be a trajectory in $\AC([0,t],\R^n)$ 
with $\eta(0)=x$ and $\nu$ be a process with 
$\nu(0)=i$ which satisfies \eqref{markov}. 
Set $\tilde{\eta}(s):=\eta(s+h)$ and 
$\tilde{\nu}(s):=\nu(s+h)$ 
for $s \in [0,t-h]$.
We have 
\begin{align*}
&\E_{i}\Big(\int_{0}^{t}L_{\nu(s)}(\eta(s),-\dot{\eta}(s))\,ds
+f_{\nu(t)}(\eta(t))\Big)\\
=&\,
\E_{i}\Big(\int_{0}^{h}L_{\nu(s)}(\eta(s),-\dot{\eta}(s))\,ds
+\int_{h}^{t}L_{\nu(s)}(\eta(s),-\dot{\eta}(s))\,ds
+f_{\nu(t)}(\eta(t))\Big)\\ 
=&\,
\E_{i}\Big(\int_{0}^{h}L_{\nu(s)}(\eta(s),-\dot{\eta}(s))\,ds\Big)
+
\E_{\nu(h)}\Big(\int_{0}^{t-h}L_{\tilde{\nu}(s)}(\tilde{\eta}(s),-\dot{\tilde{\eta}}(s))\,ds
+f_{\tilde{\nu}(t-h)}(\tilde{\eta}(t-h)) \Big)\\ 
\ge&\,
\E_{i}\Big(\int_{0}^{h}L_{\nu(s)}(\eta(s),-\dot{\eta}(s))\,ds
+u_{\nu(h)}(\eta(h),t-h)\Big)\\
\ge&\,
v_{i}(x,t;h),  
\end{align*}
in view of the memoryless property of $\nu$, 
which implies $u_i(x,t)\ge v_{i}(x,t;h)$.

Let $\del_1\in\AC([0,h],\R^n)$ and $\del_2\in\AC([0,t-h],\R^n)$
be trajectories with $\del_{1}(h)=\del_{2}(0)$ and $\del_{1}(0)=x$. 
Set 
\[
\eta(s):=
\left\{
\begin{array}{ll}
\del_{1}(s) & \textrm{for all} \ s\in[0,h], \\
\del_{2}(s-h) & \textrm{for all} \ s\in[h,t].  \\
\end{array}
\right. 
\]
Let $\nu$ be a process with $\nu(0)=i$ which satisfies \eqref{markov}. 
Note that 
\begin{align*}
&
\int_{0}^{h}L_{\nu(s)}(\del_{1}(s),-\dot{\del_{1}}(s))\,ds
+\int_{0}^{t-h}L_{\nu(s+h)}(\del_{2}(s),-\dot{\del_{2}}(s))\,ds
+f_{\nu(t)}(\del_{2}(t-h))\\ 
=&\, 
\int_{0}^{t}L_{\nu(s)}(\eta(s),-\dot{\eta}(s))\,ds
+f_{\nu(t)}(\eta(t)). 
\end{align*}
We have 
\begin{align*}
&\E_{i}\Big(\int_{0}^{h}L_{\nu(s)}(\del_{1}(s),-\dot{\del_{1}}(s))\,ds
+\int_{0}^{t-h}L_{\nu(s+h)}(\del_{2}(s),-\dot{\del_{2}}(s))\,ds
+f_{\nu(t)}(\del_{2}(t-h))\Big)\\ 
=&\, 
\E_{i}\Big(\int_{0}^{t}L_{\nu(s)}(\eta(s),-\dot{\eta}(s))\,ds
+f_{\nu(t)}(\eta(t))\Big)\\
\ge&\, 
u_i(x,t). 
\end{align*}
Take the infimum on all admissible 
$\del_2$ to obtain  
\[
\E_{i}\Big(
\int_{0}^{h}L_{\nu(s)}(\del_{1}(s),-\dot \del_{1}(s))\,ds
+u_{\nu(h)}(\del_{1}(h),t-h)\Big)
\ge 
u_i(x,t), 
\]
which implies $v_{i}(x,t;h)\ge u_i(x,t)$.
\end{proof}

\begin{proof}[Proof of Theorem {\rm \ref{thm:sol}}]
It is obvious to see that $(u_1,u_2)(\cdot,0)=(f_1,f_2)$ on $\R^n$. 
We first prove that $u_1$ is a subsolution of  $(\textrm{C}_1)$. 
We choose a function $\phi\in C^{1}(\Q)$ such that 
$u_1-\phi$ has a strict maximum at $(x_0,t_0) \in \Q$ and 
$(u_1-\phi)(x_0,t_0)=0$.

Let $h>0$.  
By Proposition \ref{dpp} we have 
\begin{equation}\label{dpp-sub}
u_1(x_0,t_0)\le
\E_{i}\Big(\int_{0}^{h}L_{\nu(s)}(\eta(s),-\dot{\eta}(s))\,ds
+u_{\nu(h)}(\eta(h),t_0-h)\Big) 
\end{equation}
for any $\eta\in\AC([0,h],\R^n)$ with $\eta(0)=x_0\in\R^n$ 
and $\dot{\eta}(0)=q\in\R^n$. 
Since $\nu$ is a continuous-time Markov chain which satisfies \eqref{markov},  
the probability that $\nu(h)=2$ is $c_1h+o(h)$ 
and the probability that $\nu(h)=1$ is $1-(c_1h+o(h))$. 
By \eqref{dpp-sub} we obtain 
\begin{align*}
&\phi(x_0,t_0)=
u_1(x_0,t_0)\\
\le&\, 
(1-c_1h-o(h))
\Big(\int_{0}^{h}L_{1}(\eta,-\dot{\eta})\,ds+u_1(\eta(h),t_0-h)\Big) \\
&\,+
(c_1h+o(h))
\Big(\int_{0}^{h}L_{2}(\eta,-\dot{\eta})\,ds+u_2(\eta(h),t_0-h)\Big)
{+o(h)}\\
\le&\, 
\int_{0}^{h}L_{1}(\eta,-\dot{\eta})\,ds+\phi(\eta(h),t_0-h)
{+o(h)}\\ 
&\, 
+
(c_1h+o(h))\Big(
\int_{0}^{h}L_{2}(\eta,-\dot{\eta})\,ds
+u_2(\eta(h),t_0-h)
-\int_{0}^{h}L_{1}(\eta,-\dot{\eta})\,ds
-u_1(\eta(h),t_0-h)
\Big).
\end{align*}
Thus, 
\begin{align*}
& 
\frac{\phi(\eta(0),t_0)-\phi(\eta(h),t_0-h)}{h}\\
\le&\, 
\frac{1}{h}\int_{0}^{h}L_{1}(\eta,-\dot{\eta})\,ds
{+\frac{o(h)}{h}}
+
(c_{1}+\frac{o(h)}{h})(u_2(\eta(h),t_0-h)-u_1(\eta(h),t_0-h))\\
&\,
+
(c_1+\frac{o(h)}{h})\Big(
\int_{0}^{h}L_{2}(\eta,-\dot{\eta})\,ds
-\int_{0}^{h}L_{1}(\eta,-\dot{\eta})\,ds
\Big). 
\end{align*}
Sending $h\to0$,  
we obtain 
\[
\phi_t(x_0,t_0)+D\phi(x_0,t_0)\cdot (-q)\le L_1(x_0,-q)
+c_1(u_2-u_1)(x_0,t_0) 
\ \text{for all} \ q\in\R^n,  
\]
which implies 
$\phi_t(x_0,t_0)+H_1(x_0,D\phi(x_0,t_0))+c_1(u_1-u_2)(x_0,t_0)\le0$.

Next we prove that $u_1$ is a supersolution of $(\textrm{C}_1)$.
We choose a function $\phi\in C^{1}(\Q)$ such that 
$u_1-\phi$ has a strict minimum at $(x_0,t_0)\in \Q$ and 
$(u_1-\phi)(x_0,t_0)=0$. Take $h, \del>0$.  
By Proposition \ref{dpp} we have 
\begin{equation}\label{dpp-sup}
u_1(x_0,t_0)+\del>
\E_{1}\Big(\int_{0}^{h}L_{\nu(s)}(\eta_{\del}(s),-\dot{\eta}_{\del}(s))\,ds
+u_{\nu(h)}(\eta_{\del}(h),t_0-h)\Big) 
\end{equation}
for some $\eta_{\del}\in\AC([0,h],\R^n)$ with $\eta_{\del}(0)=x_0$. 
Since $\nu$ is a continuous-time Markov chain which satisfies \eqref{markov},  
by a similar calculation to the above we obtain 
\begin{align*}
&\phi(x_0,t_0)+\del=
u_1(x_0,t_0)+\del\\
>&\, 
\int_{0}^{h}L_{1}(\eta,-\dot{\eta})\,ds+\phi(\eta(h),t_0-h)
{+o(h)}\\ 
&\, 
+
(c_1h+o(h))\Big(
\int_{0}^{h}L_{2}(\eta,-\dot{\eta})\,ds+u_2(\eta(h),t_0-h)
-\int_{0}^{h}L_{1}(\eta,-\dot{\eta})\,ds-u_1(\eta(h),t_0-h)
\Big).
\end{align*}
Thus, 
\begin{align*}
\frac{\del}{h}
>&\, 
\frac{1}{h}\int_{0}^{h}
\dfrac{d\phi(\eta_{\del}(s),t_0-s)}{ds}+
L_{1}(\eta_{\del},-\dot{\eta_{\del}})\,ds
{+\frac{o(h)}{h}}
\\
&\, 
+
(c_1+\frac{o(h)}{h})(u_2(\eta_{\del}(h),t_0-h)-u_1(\eta_{\del}(h),t_0-h))\\
&\, 
+
(c_1+\frac{o(h)}{h})\Big(
\int_{0}^{h}L_{2}(\eta_{\del},-\dot{\eta_{\del}})\,ds
-\int_{0}^{h}L_{1}(\eta_{\del},-\dot{\eta_{\del}})\,ds
\Big)\\
=&\, 
\frac{1}{h}\int_{0}^{h}
-\phi_t(\eta_{\del}(s),t_0-s)-D\phi\cdot(-\dot{\eta_{\del}}(s))
+
L_{1}(\eta_{\del},-\dot{\eta_{\del}})\,ds\\
&\, 
+
(c_1+\frac{o(h)}{h})(u_2(\eta_{\del}(h),t_0-h)-u_1(\eta_{\del}(h),t_0-h))
+ O(h) \\
\ge &\, 
\frac{1}{h}\int_{0}^{h}
-\big(\phi_t(\eta_{\del}(s),t_0-s)+H_{1}(\eta_{\del}(s),D\phi)\big)\,ds\\
&\, 
+
(c_1+\frac{o(h)}{h})(u_2(\eta_{\del}(h),t_0-h)-u_1(\eta_{\del}(h),t_0-h))
+O(h). 
\end{align*}
We finally set $\del=h^2$ and  let $h\to0$ to yield the conclusion. 
\end{proof}

{By a similar argument to the proof of Proposition \ref{prop:dpp} 
we can prove  
}
\begin{prop}[Dynamic Programming Principle for {\rm\eqref{def:value-D}}]\label{prop:dpp-D}
For any $x\in\R^n$, $h\ge0$ and $i=1,2$ we have 
\begin{multline}\label{dpp-D}
u_{i}^\ep(x)
= 
\inf\Big\{\E_{i}\Big(\int_{0}^{h\land\tau}
e^{-s}L_{\nu^\ep(s)}(\frac{\eta(s)}{\ep},-\dot{\eta}(s))\,ds\\
+\bone_{\{h<\tau\}}e^{-h}u_{\nu^\ep(h)}(\eta(h))
+\bone_{\{h\ge\tau\}}e^{-\tau}g_{\nu^\ep(\tau)}(\eta(\tau)) 
\Big)\Big\}, 
\end{multline}
where 
$\nu^\ep$ with $\nu^\ep(0)=i$ is a $\{1,2\}$-valued continuous-time 
Markov chain which satisfies \eqref{markov} and  
the infimum is taken over $\eta\in\AC([0,h],\cO)$ such that 
$\eta(0)=x$ and $\tau\in[0,h]$ such that 
$\eta(\tau)\in\bO$. 
\end{prop}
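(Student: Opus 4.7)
The plan is to follow the two-inequality scheme used in the proof of Proposition~\ref{prop:dpp}, inserting two extra pieces of bookkeeping that are absent there: the exit time $\tau$ and the discount factor $e^{-s}$. Write $w_i^\ep(x)$ for the right-hand side of \eqref{dpp-D}.

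To show $u_i^\ep(x)\ge w_i^\ep(x)$, I would take any admissible pair $(\eta,\tau)$ in \eqref{def:value-D} and split the analysis according to the two disjoint events $\{\tau\le h\}$ and $\{\tau>h\}$. On $\{\tau\le h\}$ one has $h\wedge\tau=\tau$, $\bone_{\{h<\tau\}}=0$ and $\bone_{\{h\ge\tau\}}=1$, so the integrand inside $\E_i$ in \eqref{dpp-D} is equal on the nose to the integrand in \eqref{def:value-D}. On $\{\tau>h\}$ I would split the running cost at time $h$ and change variables:
\begin{align*}
&\int_0^\tau e^{-s}L_{\nu^\ep(s)}(\tfrac{\eta}{\ep},-\dot\eta)\,ds + e^{-\tau}g_{\nu^\ep(\tau)}(\eta(\tau)) \\
&= \int_0^h e^{-s}L_{\nu^\ep(s)}(\tfrac{\eta}{\ep},-\dot\eta)\,ds + e^{-h}\Big[\int_0^{\tau-h} e^{-r}L_{\tilde\nu(r)}(\tfrac{\tilde\eta}{\ep},-\dot{\tilde\eta})\,dr + e^{-(\tau-h)} g_{\tilde\nu(\tau-h)}(\tilde\eta(\tau-h))\Big],
\end{align*}
with $\tilde\eta(r)=\eta(r+h)$ and $\tilde\nu(r)=\nu^\ep(r+h)$. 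Taking conditional expectation given the history of $\nu^\ep$ up to time $h$ and using the strong Markov property (i.e.\ the memoryless character of the exponential switching times encoded in \eqref{markov}), the bracketed quantity has conditional mean at least $u_{\nu^\ep(h)}^\ep(\eta(h))$. Summing the contributions of the two events and then taking the infimum over $(\eta,\tau)$ yields $u_i^\ep(x)\ge w_i^\ep(x)$.

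For the reverse inequality I would concatenate controls. Fix $\del>0$ and an admissible pair $(\eta_1,\tau_1)$ for $w_i^\ep(x)$. For every $(y,j)\in\cO\times\{1,2\}$ pick a $\del$-optimal pair $(\eta_2^{y,j},\tau_2^{y,j})$ in the definition of $u_j^\ep(y)$. On $\{\tau_1<h\}$ I keep $(\eta_1,\tau_1)$; on $\{\tau_1=h\}$ I paste $\eta_2^{\eta_1(h),\nu^\ep(h)}$ after time $h$ and set $\tau=h+\tau_2^{\eta_1(h),\nu^\ep(h)}$. The resulting pair is admissible for \eqref{def:value-D}, and by the strong Markov property its expected cost equals the integrand defining $w_i^\ep$ evaluated at $(\eta_1,\tau_1)$ up to an additive $\del$. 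Taking $\inf$ over $(\eta_1,\tau_1)$ and letting $\del\to 0$ gives $u_i^\ep(x)\le w_i^\ep(x)$.

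The main technical obstacle I expect is the measurable selection step inside the concatenation: the family $(\eta_2^{y,j},\tau_2^{y,j})$ must depend on the random datum $(y,j)=(\eta_1(h),\nu^\ep(h))$ in such a way that the glued trajectory is a genuine admissible random control with a well-defined expectation. The standard workaround, implicit already in the proof of Proposition~\ref{prop:dpp}, is to reduce to a countable dense set of starting points in $\cO$ via a piecewise-constant selection and absorb the resulting localization error in a uniform continuity modulus, exactly as in \cite{BP,I}; the only genuinely new ingredient here is the finite sum over the $\{1,2\}$-valued Markov state, which is harmless.
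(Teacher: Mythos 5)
Your proof is correct and follows essentially the same two-inequality dynamic-programming argument that the paper indicates, namely adapting the proof of Proposition~\ref{prop:dpp} to accommodate the discount factor and the exit time; the paper itself gives only the one-line remark that the argument is ``similar.'' Your observation about the measurable-selection issue in the concatenation step is a genuine subtlety that the paper's own proof of Proposition~\ref{prop:dpp} also glosses over (the pasted trajectory must be allowed to depend on $\nu^\ep(h)$), so flagging it and pointing to the standard adapted-control/countable-dense-selection workaround is the right call.
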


\begin{proof}[Proof of Theorem {\rm\ref{thm:sol-D}}]
As above, set $\ep=1$.
We only prove in what follows 
that $u_{i}$ satisfy the Dirichlet boundary condition 
in the sense of viscosity solutions, as we can 
prove $u_{i}$ satisfy the equations by an argument similar 
to the proof of Theorem \ref{thm:sol}.  
Since it is clear to see that $u_{i}\le g_i$ on $\bO$ in the classical 
sense from the definition of $u_{i}$,  
we only need to prove that $(u_1, u_2)$ is a supersolution 
of $(\textrm{D}_{1})$ and particularly that $u_1$ satisfies the boundary 
condition in the viscosity solution sense. 
Take $x_0\in\bO$ so that 
\begin{equation}\label{pf:d-1}
(u_1-g_1)(x_0)<0, 
\end{equation}
and $\phi\in C^1(\cO)$ satisfying 
$(u_1-\phi)(x_0)=\min_{\cO}(u_1-\phi)=0$. 
By Proposition \ref{prop:dpp-D}
we have 
\begin{align*}
&u_1(x_0)+h^2\\
>&\, 
\E_{1}\Big(\int_{0}^{h\land\tau_{h}}
e^{-s}L_{\nu(s)}(\eta_{h}(s),-\dot{\eta}_{h}(s))\,ds
+
\bone_{\{h<\tau_{h}\}}e^{-h}u_{\nu(h)}(\eta_{h}(h)) 
+
\bone_{\{h\ge\tau_{h}\}}e^{-\tau_{h}}g_{\nu(\tau_{h})}(\eta_{h}(\tau_{h}))
\Big)\\
\ge&\, 
\E_{1}\Big(\int_{0}^{h\land\tau_{h}}
e^{-s}L_{\nu(s)}({\eta_{h}(s)},-\dot{\eta}_{h}(s))\,ds
+
e^{-(h\land\tau_{h})}u_{\nu(h\land\tau_{h})}(\eta_{h}(h\land\tau_{h}))
\Big)
\end{align*}
for some $\eta_{h}\in\AC([0,h],\cO)$ such that 
$\eta_h(0)=x_0$ and $\tau_{h}\in[0,h]$. 
In view of \eqref{pf:d-1}, we have 
$\tau_{h}>0$ for small $h>0$. 
Therefore by a similar calculation as in the proof of Theorem \ref{thm:sol} we get 
\[
u_{1}(x_0)+H_1(x_0,D\phi(x_0))+c_1(u_1-u_2)(x_0) \ge0. 
\quad\qedhere\]
\end{proof}


%

\bibliographystyle{amsplain}
\providecommand{\bysame}{\leavevmode\hbox to3em{\hrulefill}\thinspace}
\providecommand{\MR}{\relax\ifhmode\unskip\space\fi MR }
\providecommand{\MRhref}[2]{%
  \href{http://www.ams.org/mathscinet-getitem?mr=#1}{#2}
}
\providecommand{\href}[2]{#2}

\end{document}